\newtheorem{theorem}{Theorem}
\newtheorem{corollary}[theorem]{Corollary}
\newtheorem{definition}[theorem]{Definition}
\newtheorem{example}[theorem]{Example}
\newtheorem{lemma}[theorem]{Lemma}
\newtheorem{proposition}[theorem]{Proposition}
\newtheorem{remark}[theorem]{Remark}
\newcommand{\R}{{\mathbb R}}
\newcommand{\C}{{\mathbb C}}
\newcommand{\N}{{\mathbb N}}
\newcommand{\Real}{{\mbox{Re}}}
\newcommand{\spann}{{\mbox{span}}}
\newcommand{\ran}{{\mbox{ran}}}
\newcommand{\Imag}{{\mbox{Im}}}
\newcommand{\idty}{{\mathbbm{1}}}
\numberwithin{equation}{section}
\numberwithin{theorem}{section} 
\numberwithin{footnote}{section}
\begin{document}

%\today
\title[On non-proper dissipative extensions]{The Non-Proper Dissipative Extensions of a Dual Pair}

\author[C. Fischbacher]{Christoph Fischbacher$^1$}
\address{$^1$ Department of Mathematics, University of Alabama at Birmingham, Birmingham, AL 35294, USA}
\email{cfischb@uab.edu}

\date{}

%\vspace{.3truein}
%\centerline{\bf Abstract}\mdeskip
%
\begin{abstract}
We consider dissipative operators $A$ of the form $A=S+iV$, where both $S$ and $V\geq 0$ are assumed to be symmetric but neither of them needs to be (essentially) selfadjoint. After a brief discussion of the relation of the operators $S\pm iV$ to dual pairs with the so called common core property, we present a necessary and sufficient condition for any extension of $A$ with domain contained in $\mathcal{D}((S-iV)^*)$ to be dissipative. We will discuss several special situations in which this condition can be expressed in a particularly nice form -- accessible to direct computations. Examples involving ordinary differential operators are given.
\end{abstract}
\numberwithin{equation}{section}
\numberwithin{theorem}{section} 
\numberwithin{footnote}{section}
\maketitle
\section{Introduction} 
In this note, we want to contribute towards the extension theory of dissipative operators $A$ of the form $A=S+iV$, where $S$ and $V\geq 0$ are both symmetric but neither of them needs to be selfadjoint or essentially selfadjoint.
In this sense, we will obtain a more general result than that of Crandall and Phillips \cite{Crandall}, who considered dissipative operators $A$ that were of the form\footnote{In \cite{Crandall}, a densely defined operator is called dissipative if its numerical range is confined to the left half plane $\Pi_-:=\{z\in\C: \Real(z)\leq 0\}$. Since we will call an operator dissipative if its numerical range is confined to the upper complex plane, we have changed the presentation of the results in \cite{Crandall} accordingly.} 
$A=S+iV$, 
where $S$ is symmetric, but $V\geq 0$ is assumed to be selfadjoint. However, we have stricter conditions on the domains of our extensions. 
\subsection{Dissipative operators and extension theory}
The study of non-selfadjoint operators has proven itself to be a very fruitful field of mathematical research. For an introduction into the many new phenomena and problems that arise if one gives up the condition of selfadjointness, we refer the interested reader to the classic monograph \cite{GK} and the references therein. We mention in particular the work of Brodskii and {Liv\v sic} who addressed questions such as the completeness of root vectors and introduced characteristic matrix functions and triangular models of non-selfadjoint operators \cite{BL58, Livs46, Livs54}.

In what follows, we will call a densely defined operator $A$ on a Hilbert space $\mathcal{H}$ \emph{dissipative} if and only if its numerical range is confined to the upper complex plane, i.e.\ if and only if 
$$\Imag\langle \psi,A\psi\rangle\geq 0$$
for any $\psi\in\mathcal{D}(A)$. Note that we have defined the sesquilinear form $\langle\cdot,\cdot\rangle$ to be antilinear in the first and linear in the second component. Moreover, we call a dissipative operator $A$ \emph{maximally dissipative} if it has no non-trivial dissipative operator extension, i.e. $A$ being maximally dissipative and $B$ being a dissipative operator extension of $A$ implies that $A=B$. Maximally dissipative operators possess various interesting features, e.g.\ they generate strongly continuous semigroups of contractions \cite{Phillips59} and always have a selfadjoint dilation \cite{NagyFoias}.

Thus, the theory of dissipative extensions of a given operator is an extensively studied problem (for an overview, we recommend the surveys \cite{Arli,AT2009} and all the references therein). Besides the classical results of von Neumann on the theory of selfadjoint extensions of a given symmetric operator \cite{vNeumann} and of Kre\u\i{n}, Birman, Vishik and Grubb on positive selfadjoint and maximally sectorial extensions of a given symmetric operator with positive numerical range \cite{Krein, Vishik, Birman, Grubb68, Alonso-Simon, Grubbfriendly}, let us also mention the results of authors like Arlinski\u\i , Belyi, Derkach, Kovalev, Malamud, Mogilevskii and {Tsekanovski\u\i} \cite{Arlinskii95, ABT2011, Kovalev, AT2005, DM91, DMT88, MM97, MM99, MM02, Tsek80, Tsek81} who have made many contributions using form methods and boundary triples in order to determine maximally sectorial and maximally accretive extensions of a given sectorial operator.\footnote{A densely defined operator $A$ is called (maximally) accretive if $(iA)$ is (maximally) dissipative. If in addition, there exists a $\phi\neq \pi/2$ such that $(e^{i\phi}A)$ is (maximally) dissipative as well, then it is called (maximally) sectorial.} Let us also mention examples, where explicit computations of maximally dissipative (resp.\ accretive) extensions for positive symmetric differential operators \cite{EvansKnowles85}, \cite{EvansKnowles86} and for sectorial Sturm-Liouville operators \cite{BrownEvans} have been made.

For the general problem of finding dissipative extensions of truly dissipative operators, Phillips showed that -- via the Cayley-transform and its inverse -- this is equivalent to finding contractive extensions of a non-densely defined contraction. This problem has been solved by Crandall \cite[Thm.\ 1 and Cor.\ 1]{CrandallContraction} who therefore has provided a full solution to the extension problem (note also the results in \cite{AG}). Crandall established that if $C$ is a contraction defined on a closed subspace $\mathcal{C}$ of a Hilbert space $\mathcal{H}$ and mapping to $\mathcal{H}$, all contractive extensions $\widetilde{C}$ of $C$ can be described via
\begin{equation*}
\widetilde{C}=CP_\mathcal{C}+(\idty-CP_\mathcal{C}(CP_\mathcal{C})^*)^{1/2}B(\idty-P_\mathcal{C})\:,
\end{equation*}
where $P_\mathcal{C}$ is the orthogonal projection onto $\mathcal{C}$ and $B$ is an arbitrary contraction on $\mathcal{H}$. However, for concrete applications, the operators involved in this construction are often very difficult to compute. Thus, in \cite{Crandall}, Crandall and Phillips made extra assumptions on the structure of the considered dissipative operator $A$ and required that it could be written in the form
\begin{equation} \label{eq:crandallrequire}
A=S+iV\:,
\end{equation}
where $S$ is symmetric, $V\geq 0$ is selfadjoint and $\mathcal{D}(A)=\mathcal{D}(S)=\mathcal{D}(V)$. Let us briefly describe their approach in the next section.
\subsection{The construction of Crandall and Phillips}

For the case \eqref{eq:crandallrequire} considered by Crandall and Phillips, it follows from non-negativity and selfadjointness of $V$ that the operator $(\idty+V)$ is a boundedly invertible bijection from $\mathcal{D}(V)$ onto $\mathcal{H}$. They then introduce the weighted Hilbert space $\mathcal{H}_{+1}$ which is the linear space $\mathcal{D}(V^{1/2})$ equipped with the inner product $\langle f,g\rangle_{+1}:=\langle (\idty+V)^{1/2}f,(\idty+V)^{1/2}g\rangle$. Using standard ideas of the construction of Gel'fand triples, they associate every element $f$ of $\mathcal{H}$ with an element $\ell_f$ of the dual space $\mathcal{H}_{+1}^*$ of $\mathcal{H}_{+1}$ via
\begin{equation*}
\ell_f(g):=\langle f,g\rangle\quad\text{for any}\quad g\in\mathcal{H}_{+1}\:,
\end{equation*}
which has norm equal to
\begin{align*}
\|\ell_f\|&=\|(\idty+V)^{-1/2}f\|=:\|f\|_{-1}\:.
\end{align*}
The space $\mathcal{H}_{-1}$ is then obtained as the completion of $\mathcal{H}$ in $\mathcal{H}_{+1}^*$ with respect to $\|\cdot\|_{-1}$. Since for any $f\in\mathcal{D}(V^{1/2})$ and for any $g\in\mathcal{H}$ we have that $\|f\|\leq\|(\idty+V)^{1/2}f\|=\|f\|_{+1}$ and $\|g\|\geq \|(\idty+V)^{-{1/2}}g\|=\|g\|_{-1}$, we obtain the following inclusions:
\begin{equation*}
\mathcal{H}_{+1}\subset\mathcal{H}\subset\mathcal{H}_{-1}\:.
\end{equation*}
In particular, this implies that $V$ is bounded as an operator from $\mathcal{H}_{+1}$ to $\mathcal{H}_{-1}$ --- a feature which Crandall and Phillips use in order to determine all maximally dissipative extensions of $A$ as an operator from $\mathcal{H}_{+1}$ to $\mathcal{H}_{-1}$ \cite[Thm.\ 1.1]{Crandall}. Having obtained a maximally dissipative operator $\widehat{A}$ from $\mathcal{H}_{+1}$ to $\mathcal{H}_{-1}$, they then construct a dissipative extension $\widehat{A}^0$ of $A$ (as an operator in $\mathcal{H}$) via
\begin{align*}
\widehat{A}^0:\quad\mathcal{D}(\widehat{A}^0)=\{f\in\mathcal{D}(\widehat{A}): \widehat{A}f\in\mathcal{H}\},\quad\widehat{A}^0f:=\widehat{A}f\:. 
\end{align*}
If $V$ is bounded, this provides a full characterization of all maximally dissipative extensions of $A$, since the spaces $\mathcal{H}_{+1},\mathcal{H}$ and $\mathcal{H}_{-1}$ are equivalent in this case. For the unbounded case, this construction yields dissipative extensions of $A$ that have domain contained in $\mathcal{D}(V^{1/2})$, which does not always provide a full description of all maximally dissipative extensions of $A$ (cf.\ \cite[Example 2]{Crandall}). Also, even if $\widehat{A}$ is a maximally dissipative operator from $\mathcal{H}_{+1}$ to $\mathcal{H}_{-1}$, it is possible that $\widehat{A}^0$ is not a maximally dissipative operator in $\mathcal{H}$ \cite[Example 1]{Crandall}. However, Crandall and Phillips prove a necessary and sufficient condition for when all maximally dissipative extensions $\widehat{A}$ from $\mathcal{H}_{+1}$ to $\mathcal{H}_{-1}$ induce also a maximally dissipative extension $\widehat{A}^0$ in $\mathcal{H}$ \cite[Thm. 3.3]{Crandall}.

\subsection{Our approach}
In a previous note \cite{FNW}, we considered so called dual pairs of operators $(A,\widetilde{A})$, where $A$ and $(-\widetilde{A})$ were assumed to be dissipative and to possess a common core, which means that there exists a linear space $\mathcal{D}\subset\mathcal{D}(A)\cap\mathcal{D}(\widetilde{A})$ such that $A=\overline{A\upharpoonright_\mathcal{D}}$ and $\widetilde{A}=\overline{\widetilde{A}\upharpoonright_\mathcal{D}}$. Given $A=S+iV$, we will define $\widetilde{A}:=S-iV$ and show that $(A,\widetilde{A})$ is such a dual pair (with common core). We then show that considering such dual pairs is an equivalent point of view to assuming that $A$ is of the form $A=S+iV$ (Lemma \ref{thm:ccore}). In this sense, our results are going to be an extension of \cite{FNW}, where we gave a criterion to determine whether an extension $\widehat{A}$ with the property that $A\subset\widehat{A}\subset\widetilde{A}^*$ is dissipative, since we will drop the requirement that $\widehat{A}\subset\widetilde{A}^*$, while keeping the condition that $\mathcal{D}(\widehat{A})\subset\mathcal{D}(\widetilde{A}^*)$, i.e.\ the restrictions on the domain of $\widehat{A}$ remain but the action of the extensions $\widehat{A}$ may differ from that of $\widetilde{A}^*$. 

As it turns out, the square-roots of the selfadjoint Friedrichs and Kre\u\i n-von Neumann extensions of $V$ -- denoted by $V_F^{1/2}$, respectively by $V_K^{1/2}$, will play an important part in the presentation of our main result (Theorem \ref{thm:alter}). In particular, we will single out three cases in which it will be possible to simplify the result of Theorem \ref{thm:alter} and express the necessary and sufficient condition for an extension $\widehat{A}$ to be dissipative in terms that only involve $V_F^{1/2}$ and $V_K^{1/2}$ in terms of the quadratic forms $\psi\mapsto\|V_*^{1/2}\psi\|^2$, where $*\in\{F,K\}$ -- a feature which makes it accessible to direct calculation. These three cases are given by (i) an additional restriction on the action of $\widehat{A}$, (ii) $V\geq \varepsilon > 0$, and (iii) $V$ has only one non-negative selfadjoint extension (i.e. $V_F=V_K$). We will also discuss the interplay between boundary conditions (determined by the choice of $\mathcal{D}(\widehat{A})$) and the ``deviation" of $\widehat{A}$ from being a ``proper" extension of $(A,\widetilde{A})$ -- (determined by $(\widetilde{A}^*-\widehat{A})\upharpoonright_{\mathcal{D}(\widehat{A})}$). We will show that there is a fundamentally different behavior between the case that $V$ has only one non-negative selfadjoint extension $V_F=V_K$ (Corollary \ref{coro:ausmachen}) and the case that $V_F\neq V_K$ (Example \ref{ex:1493}).
\section{Some definitions and previous results}
We start with a few basic definitions and results on dissipative operators.
Firstly, let us state a lemma on by how many linearly independent vectors the domain of a given closed dissipative operator with finite defect index has to increase in order to obtain a maximally dissipative extension.  
\begin{lemma}[Mentioned in \cite{Crandall}, see also \cite{thesis} for a proof.] \label{prop:dampfnudel}
Let $A$ be a closed and dissipative operator on a separable Hilbert space $\mathcal{H}$ such that $\dim\text{\emph{ker}}(A^*-i)<\infty$. Moreover, let $\widehat{A}$ be a dissipative extension of $A$. Then, $\widehat{A}$ is maximally dissipative if and only if 
\begin{equation*}
\dim \mathcal{D}(\widehat{A})/{\mathcal{D}(A)}=\dim\text{\emph{ker}}(A^*-i)\:.
\end{equation*}
\end{lemma}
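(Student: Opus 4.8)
The plan is to reduce the statement to a dimension count on ranges via the map $(\,\cdot\,+i)$, which plays the role of the Cayley transform. The two facts I would isolate first are: (a) any dissipative operator $B$ satisfies $\|(B+i)\psi\|^2=\|B\psi\|^2+\|\psi\|^2+2\Imag\langle\psi,B\psi\rangle\geq\|\psi\|^2$, so $(B+i)$ is bounded below and in particular injective; and (b) a densely defined dissipative operator $B$ is maximally dissipative if and only if $\ran(B+i)=\mathcal{H}$. Applying (a) to the closed operator $A$, the operator $(A+i)$ is closed and bounded below, hence $\ran(A+i)$ is closed, and its orthogonal complement is $\ker((A+i)^*)=\ker(A^*-i)$, which by hypothesis has finite dimension $n$. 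Thus $\ran(A+i)$ has codimension $n$ in $\mathcal{H}$.

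Next I would set up the key isomorphism. Since $A\subset\widehat{A}$, the injective map $(\widehat{A}+i)$ sends $\mathcal{D}(A)$ onto $\ran(A+i)$ and $\mathcal{D}(\widehat{A})$ onto $\ran(\widehat{A}+i)\supseteq\ran(A+i)$. Composing with the quotient map and using injectivity of $(\widehat{A}+i)$ to identify the kernel as exactly $\mathcal{D}(A)$, one obtains a linear isomorphism
\begin{equation*}
\mathcal{D}(\widehat{A})/\mathcal{D}(A)\;\xrightarrow{\;\sim\;}\;\ran(\widehat{A}+i)/\ran(A+i)\:,
\end{equation*}
so that $\dim\mathcal{D}(\widehat{A})/\mathcal{D}(A)=\dim\ran(\widehat{A}+i)/\ran(A+i)=:k$. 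Because $\ran(\widehat{A}+i)$ lies between the closed, finite-codimension subspace $\ran(A+i)$ and $\mathcal{H}$, additivity of codimension along the tower $\ran(A+i)\subseteq\ran(\widehat{A}+i)\subseteq\mathcal{H}$ gives
\begin{equation*}
n=\dim\mathcal{H}/\ran(A+i)=\dim\mathcal{H}/\ran(\widehat{A}+i)+k\:.
\end{equation*}
Since $n<\infty$, this yields $\dim\mathcal{H}/\ran(\widehat{A}+i)=n-k$, whence $\ran(\widehat{A}+i)=\mathcal{H}$ if and only if $k=n$. Combined with fact (b), this is exactly the claimed equivalence: $\widehat{A}$ is maximally dissipative if and only if $\dim\mathcal{D}(\widehat{A})/\mathcal{D}(A)=\dim\ker(A^*-i)$.

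The easy half of (b) is direct: if $\ran(\widehat{A}+i)=\mathcal{H}$ and $B\supseteq\widehat{A}$ is dissipative, then for $\phi\in\mathcal{D}(B)$ one finds $\psi\in\mathcal{D}(\widehat{A})$ with $(\widehat{A}+i)\psi=(B+i)\phi=(B+i)\psi$, and injectivity of $(B+i)$ forces $\phi=\psi$, so $B=\widehat{A}$. The main obstacle is the converse half of (b), that maximal dissipativity forces $\ran(\widehat{A}+i)=\mathcal{H}$. Here I would use that $\ran(\widehat{A}+i)$ is a finite-codimension extension of the closed subspace $\ran(A+i)$ and is therefore itself closed; were it proper, then $\ker(\widehat{A}^*-i)\neq\{0\}$, and adjoining a normalized deficiency vector would produce a genuine dissipative extension of $\widehat{A}$, contradicting maximality. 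This deficiency-vector construction -- equivalently, the standard Phillips characterization of maximal dissipative generators -- is the one nontrivial analytic input; everything else is the bookkeeping above. The separability and finite-defect hypotheses enter precisely to keep the relevant quotients finite-dimensional and $\ran(\widehat{A}+i)$ closed.
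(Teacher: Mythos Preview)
The paper does not supply its own proof of this lemma; it merely records the statement and points to \cite{Crandall} and \cite{thesis}. There is therefore nothing in the paper to compare your argument against.

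That said, your argument is correct and is the standard route: translate the problem to ranges via the injective map $(\,\cdot\,+i)$, use that $\ran(A+i)$ is closed with orthogonal complement $\ker(A^*-i)$, obtain the isomorphism $\mathcal{D}(\widehat{A})/\mathcal{D}(A)\cong\ran(\widehat{A}+i)/\ran(A+i)$, and finish with the Phillips characterization of maximal dissipativity by $\ran(\widehat{A}+i)=\mathcal{H}$. Two minor points you glossed over that a reader might want made explicit: first, a maximally dissipative operator is automatically closed (its closure is dissipative and extends it), which is what justifies your closedness-of-range step for $\widehat{A}$; second, in the deficiency-vector construction one should check that a nonzero $\phi\in\ker(\widehat{A}^*-i)$ cannot already lie in $\mathcal{D}(\widehat{A})$---this follows since $\phi\perp\ran(\widehat{A}+i)$ would force $0=\Imag\langle\phi,(\widehat{A}+i)\phi\rangle\geq\|\phi\|^2$. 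With these fillers, your proof is complete; the separability hypothesis is in fact not used anywhere in your argument (nor is it needed for the statement).
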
 
Next, let us introduce some convenient notation for complementary subspaces:
\begin{definition} Let $\mathcal{N,M}$ be (not necessarily closed) linear spaces such that $\mathcal{M}\subset\mathcal{N}$. With the notation $\mathcal{N}//\mathcal{M}$ we mean any subspace of $\mathcal{N}$, which is complementary to $\mathcal{M}$, i.e.
\begin{equation*}
(\mathcal{N}//\mathcal{M})+\mathcal{M}=\mathcal{N}\quad\text{and}\quad(\mathcal{N}//\mathcal{M})\cap\mathcal{M}=\{0\}\:.
\end{equation*}
\end{definition}
Finally, we will need the characterization of the Kre\u\i n-von Neumann extension of a given non-negative symmetric operator $V$, which has been shown by Ando and Nishio.
\begin{proposition}[{\cite[Thm.\ 1]{Ando-Nishio}}]
Let $V$ be a non-negative closed symmetric operator.  The selfadjoint and non-negative square root of the Kre\u\i n--von Neumann extension of $V$, which we denote by $V_K^{1/2}$, can be characterized as follows:
\begin{align*}
\mathcal{D}(V_K^{1/2})=\left\{h\in\mathcal{H}: \sup_{f\in\mathcal{D}(V):Vf\neq 0}\frac{|\langle h,Vf\rangle|^2}{\langle f,Vf\rangle}<\infty\right\}\:,\\
\text{for any}\:\: h\in\mathcal{D}(V_K^{1/2}):\quad\|V_K^{1/2}h\|^2= \sup_{f\in\mathcal{D}(V):Vf\neq 0}\frac{|\langle h,Vf\rangle|^2}{\langle f,Vf\rangle}\:.
\end{align*} \label{thm:nett}
\end{proposition}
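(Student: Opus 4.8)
The plan is to recognize the supremum functional as a Legendre--Fenchel (dual-norm) transform of the form $f\mapsto\langle f,Vf\rangle$ and then to identify the resulting closed form with that of the minimal non-negative selfadjoint extension $V_K$. Write
\[
q(h):=\sup_{f\in\mathcal{D}(V):\,Vf\neq 0}\frac{|\langle h,Vf\rangle|^2}{\langle f,Vf\rangle}\,.
\]
First I would observe that, optimizing over the scaling $f\mapsto tf$ with $t\in\C$, one has the equivalent expression $q(h)=\sup_{f\in\mathcal{D}(V)}\bigl(2\Real\langle h,Vf\rangle-\langle f,Vf\rangle\bigr)$, which exhibits $q$ as a supremum of $\mathcal{H}$-continuous functionals of $h$, hence as a convex, lower semicontinuous, positively $2$-homogeneous functional. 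To see that it is genuinely a quadratic form, I would equip $\ran V$ with the inner product $[Vf,Vg]:=\langle f,Vg\rangle$ (well defined and positive definite, since $V\geq 0$ forces $Vf\neq 0\Rightarrow\langle f,Vf\rangle>0$), let $\mathcal{K}$ be its completion, and note that $q(h)$ is exactly the squared norm of the linear functional $\ell_h:Vf\mapsto\langle h,Vf\rangle$ on $(\ran V,[\cdot,\cdot])$. By Riesz representation on $\mathcal{K}$, whenever $q(h)<\infty$ there is $\xi_h\in\mathcal{K}$ with $q(h)=\|\xi_h\|_{\mathcal{K}}^2$ and $h\mapsto\xi_h$ linear, so $\mathfrak{q}(h,g):=[\xi_h,\xi_g]$ is a non-negative sesquilinear form with $q(h)=\mathfrak{q}(h,h)$.

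The easy inequality comes next. For any non-negative selfadjoint extension $\widetilde{V}\supseteq V$ and any $h\in\mathcal{D}(\widetilde{V}^{1/2})$, I would write $\langle h,Vf\rangle=\langle h,\widetilde{V}f\rangle=\langle\widetilde{V}^{1/2}h,\widetilde{V}^{1/2}f\rangle$ and apply Cauchy--Schwarz to obtain $|\langle h,Vf\rangle|^2\leq\|\widetilde{V}^{1/2}h\|^2\langle f,Vf\rangle$; taking the supremum gives $q(h)\leq\|\widetilde{V}^{1/2}h\|^2$. Specialising to $\widetilde{V}=V_K$ shows $\mathcal{D}(V_K^{1/2})\subseteq\{q<\infty\}$ together with $q\leq\|V_K^{1/2}\,\cdot\,\|^2$ there; in particular the form domain $\{q<\infty\}$ is dense.

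For the reverse direction I would use minimality of $V_K$. Since $q$ is non-negative, densely defined and lower semicontinuous, it is a closed form, so by the representation theorem it is the form of a unique non-negative selfadjoint operator $V_q$ with $\mathcal{D}(V_q^{1/2})=\{q<\infty\}$ and $\|V_q^{1/2}h\|^2=q(h)$. I would then check that $V_q$ extends $V$: for $f_0\in\mathcal{D}(V)$ the Cauchy--Schwarz estimate $|\langle f_0,Vf\rangle|^2=|[Vf_0,Vf]|^2\leq[Vf_0,Vf_0]\,[Vf,Vf]$, with equality at $f=f_0$, gives $\xi_{f_0}=Vf_0\in\ran V\subseteq\mathcal{K}$ and $q(f_0)=\langle f_0,Vf_0\rangle$, whence $\mathfrak{q}(f_0,g)=[Vf_0,\xi_g]=\langle Vf_0,g\rangle$ for every $g$ in the form domain; by the representation theorem this means $f_0\in\mathcal{D}(V_q)$ and $V_qf_0=Vf_0$. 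Thus $V_q$ is a non-negative selfadjoint extension of $V$, and minimality of the Kre\u\i{n}--von Neumann extension yields $V_K\leq V_q$ in the form sense, i.e.\ $\{q<\infty\}=\mathcal{D}(V_q^{1/2})\subseteq\mathcal{D}(V_K^{1/2})$ and $\|V_K^{1/2}h\|^2\leq q(h)$. Combined with the easy inequality this forces $\{q<\infty\}=\mathcal{D}(V_K^{1/2})$ and $q=\|V_K^{1/2}\,\cdot\,\|^2$, which is the assertion.

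The main obstacle I anticipate is the functional-analytic bookkeeping of the energy space: verifying that $[\,\cdot\,,\cdot\,]$ is well defined and positive definite on $\ran V$, that $q$ is lower semicontinuous and hence closed (so that the representation theorem applies), and --- most delicately --- the clean identification $\xi_{f_0}=Vf_0$ that makes $V_q$ an honest operator extension of $V$ rather than merely a form extension. Once these are in place, the minimality characterisation of $V_K$ does all the remaining work.
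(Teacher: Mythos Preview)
The paper does not supply its own proof of this proposition; it is quoted verbatim from Ando--Nishio \cite{Ando-Nishio} and used as a black box (the paper only proves the reformulation in Corollary~\ref{coro:andonishio}). So there is nothing in the paper to compare your argument against.

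That said, your outline is a sound proof, and in spirit it is essentially the Ando--Nishio argument: pass to the energy space $(\ran V,[\cdot,\cdot])$, recognise the supremum as a dual norm, use Riesz to manufacture a closed quadratic form, verify that the associated selfadjoint operator $V_q$ extends $V$, and then invoke the extremality of $V_K$ among non-negative selfadjoint extensions to squeeze $q$ between $\|V_K^{1/2}\,\cdot\,\|^2$ and itself. Each step you flag as delicate is genuinely the place where care is needed, and your indications are correct: well-definedness and positive definiteness of $[\cdot,\cdot]$ follow from the Cauchy--Schwarz inequality for the semidefinite form $(f,g)\mapsto\langle f,Vg\rangle$ together with $\langle f,Vf\rangle=\|\widehat{V}^{1/2}f\|^2$ for any non-negative selfadjoint extension $\widehat{V}$; lower semicontinuity of $q$ is immediate from your Legendre rewriting as a supremum of $\mathcal{H}$-continuous affine functionals, and for non-negative Hermitian forms lower semicontinuity is equivalent to closedness; the identification $\xi_{f_0}=Vf_0$ is exactly the computation $\ell_{f_0}(Vf)=\langle f_0,Vf\rangle=[Vf_0,Vf]$, after which $\mathfrak{q}(f_0,g)=[Vf_0,\xi_g]=\overline{\ell_g(Vf_0)}=\langle Vf_0,g\rangle$ gives $V_qf_0=Vf_0$ by the representation theorem. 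One small remark: be explicit that $h\mapsto\xi_h$ is \emph{linear} (the antilinearity of $h\mapsto\ell_h$ cancels against the antilinearity of the first slot of $[\cdot,\cdot]$), so that $\mathfrak{q}$ is genuinely sesquilinear in the right convention.
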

For our purposes, it will be more convenient to use the following characterization of $\mathcal{D}(V_K^{1/2})$ and $\|V_K^{1/2}h\|$:
\begin{corollary} \label{coro:andonishio}
Let $V$ be a non-negative closed symmetric operator on a Hilbert space $\mathcal{H}$. Then, the square root of its Kre\u\i n--von Neumann extension can be characterized as follows 
\begin{align} \label{eq:mjunitsch}
\mathcal{D}(V_K^{1/2})=\left\{h\in\mathcal{H}: \sup_{g\in\text{\emph{ran}}(\widehat{V}^{1/2}\upharpoonright_{\mathcal{D}(V)}):\|g\|=1}{|\langle h,\widehat{V}^{1/2}g\rangle|}<\infty\right\}\:,\\
\text{for any}\:\: h\in\mathcal{D}(V_K^{1/2}):\quad\|V_K^{1/2}h\|^2=\sup_{g\in\text{\emph{ran}}(\widehat{V}^{1/2}\upharpoonright_{\mathcal{D}(V)}):\|g\|=1}{|\langle h,\widehat{V}^{1/2}g\rangle|^2}\:,
\end{align}
where $\widehat{V}$ is any non-negative selfadjoint extension of $V$.
\end{corollary}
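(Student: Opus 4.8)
The plan is to derive this directly from the Ando--Nishio characterization in Proposition \ref{thm:nett} by means of the change of variables $g:=\widehat{V}^{1/2}f$, where $\widehat{V}$ is the chosen non-negative selfadjoint extension of $V$. First I would fix an arbitrary such $\widehat{V}$ and observe that $\mathcal{D}(V)\subset\mathcal{D}(\widehat{V})\subset\mathcal{D}(\widehat{V}^{1/2})$, so that for every $f\in\mathcal{D}(V)$ the vector $g=\widehat{V}^{1/2}f$ is well defined and, by definition, ranges precisely over $\ran(\widehat{V}^{1/2}\upharpoonright_{\mathcal{D}(V)})$ as $f$ runs through $\mathcal{D}(V)$.

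The next step is to record the two algebraic identities that make the substitution work. Since $\widehat{V}$ extends $V$, we have $Vf=\widehat{V}f$ for $f\in\mathcal{D}(V)$, and since $f\in\mathcal{D}(\widehat{V})$ the functional calculus yields $g=\widehat{V}^{1/2}f\in\mathcal{D}(\widehat{V}^{1/2})$ together with $\widehat{V}^{1/2}g=\widehat{V}f=Vf$. Consequently $\langle h,Vf\rangle=\langle h,\widehat{V}^{1/2}g\rangle$ and $\langle f,Vf\rangle=\langle f,\widehat{V}f\rangle=\|\widehat{V}^{1/2}f\|^2=\|g\|^2$. In particular $Vf\neq 0$ holds if and only if $g\neq 0$, since $\|g\|^2=\langle f,Vf\rangle$, so the admissible index sets of the two suprema correspond to one another under $f\mapsto g$.

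Combining these identities turns the Ando--Nishio quotient into
\[
\frac{|\langle h,Vf\rangle|^2}{\langle f,Vf\rangle}=\frac{|\langle h,\widehat{V}^{1/2}g\rangle|^2}{\|g\|^2}\:,
\]
and since $\ran(\widehat{V}^{1/2}\upharpoonright_{\mathcal{D}(V)})$ is a linear space the right-hand side is invariant under rescaling $g$, so taking the supremum over nonzero $g$ in this range is the same as taking it over the normalized vectors with $\|g\|=1$. This identifies both the defining set of $\mathcal{D}(V_K^{1/2})$ (finiteness of the supremum) and the value $\|V_K^{1/2}h\|^2$ with the expressions claimed in \eqref{eq:mjunitsch}; moreover, because the left-hand sides depend only on $V$ and not on $\widehat{V}$, the resulting formula holds for every non-negative selfadjoint extension $\widehat{V}$, as asserted. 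The only point requiring genuine care is the domain bookkeeping in the second step -- namely that $g=\widehat{V}^{1/2}f$ indeed lies in $\mathcal{D}(\widehat{V}^{1/2})$ and that $\widehat{V}^{1/2}g=Vf$ -- which is where the identity $\mathcal{D}(\widehat{V})=\{f\in\mathcal{D}(\widehat{V}^{1/2}):\widehat{V}^{1/2}f\in\mathcal{D}(\widehat{V}^{1/2})\}$ from the functional calculus is essential.
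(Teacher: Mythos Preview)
Your proposal is correct and follows essentially the same approach as the paper's own proof: both perform the substitution $g=\widehat{V}^{1/2}f$, use $Vf=\widehat{V}^{1/2}g$ and $\langle f,Vf\rangle=\|g\|^2$ to rewrite the Ando--Nishio quotient, and then normalize $g$ to obtain the stated supremum. Your treatment of the domain bookkeeping and the equivalence $Vf\neq 0\Leftrightarrow g\neq 0$ is in fact slightly more explicit than the paper's, but the argument is the same.
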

\begin{proof}
Let us consider any $f\in\mathcal{D}(V)$ such that $Vf\neq 0$. Since $Vf=\widehat{V}f=\widehat{V}^{1/2}\widehat{V}^{1/2}f$, we then get
\begin{equation*}
\frac{|\langle h,Vf\rangle|^2}{\langle f,Vf\rangle}=\frac{|\langle h,\widehat{V}^{1/2}\widehat{V}^{1/2}f\rangle|^2}{\|\widehat{V}^{1/2}f\|^2}=\left|\left\langle h,\widehat{V}^{1/2}\left(\frac{\widehat{V}^{1/2}f}{\|\widehat{V}^{1/2}f\|}\right)\right\rangle\right|^2\:.
\end{equation*}
Now, observe that $\frac{\widehat{V}^{1/2}f}{\|\widehat{V}^{1/2}f\|}$ is a normalized element of $\ran(\widehat{V}^{1/2}\upharpoonright_{\mathcal{D}(V)})$. Conversely, for any normalized $g\in\ran(\widehat{V}^{1/2}\upharpoonright_{\mathcal{D}(V)})$, there exists a $f\in\mathcal{D}(V)$ with $Vf\neq 0$ such that $g=\frac{\widehat{V}^{1/2}f}{\|\widehat{V}^{1/2}f\|}$. This implies that
\begin{equation*}
\sup_{f\in\mathcal{D}(V):Vf\neq 0}\left|\left\langle h,\widehat{V}^{1/2}\left(\frac{\widehat{V}^{1/2}f}{\|\widehat{V}^{1/2}f\|}\right)\right\rangle\right|^2=\sup_{g\in\text{ran}(\widehat{V}^{1/2}\upharpoonright_{\mathcal{D}(V)}):\|g\|=1}|\langle h,\widehat{V}^{1/2}g\rangle|^2\:,
\end{equation*}
which --- together with Proposition \ref{thm:nett} --- yields the corollary.
\end{proof}
\section{The common core property} 
Given any bounded operator $A$, the decomposition into its selfadjoint real part $S:=(A+A^*)/2$ and selfadjoint imaginary part $V:=(A-A^*)/(2i)$ allows us to always write $A$ as $A=S+iV$. For the unbounded case, this is generally not possible as one has to be careful with the domains. However, in the case that it is possible to decompose $A$ as
\begin{equation} \label{eq:decomp}
A=S+iV\:,
\end{equation}
where both $S$ and $V\geq 0$ are symmetric and $\mathcal{D}(A)=\mathcal{D}(S)= \mathcal{D}(V)$, one can use the framework of dual pairs $(A,\widetilde{A})$ of operators to decompose $A$ analogously as in the bounded case. To this end, let us firstly recall their definition (see also \cite{Edmunds-Evans, LyantzeStorozh} for more details): 
\begin{definition} Let $(A,\widetilde{A})$ be a pair of densely defined and closable operators. We say that they form a {\bf{dual pair}} if 
\begin{equation*}
A\subset \widetilde{A}^*\quad\text{resp.}\quad \widetilde{A}\subset A^*\:.
\end{equation*}
In this case, $A$ is called a {\bf{formal adjoint}} of $\widetilde{A}$ and vice versa. Moreover, an operator $\widehat{A}$ such that $A\subset\widehat{A}\subset\widetilde{A}^*$ is called a {\bf{proper extension}} of the dual pair $(A,\widetilde{A})$.
\end{definition} 
It is then not hard to see that with the choice $\widetilde{A}:=S-iV$ ($\mathcal{D}(\widetilde{A})=\mathcal{D}(S)=\mathcal{D}(V)$), we have that $(A,\widetilde{A})$ is a dual pair since 
\begin{equation} \label{eq:kirchedorf}
\langle f,\widetilde{A}g\rangle=\langle f, (S-iV)g\rangle=\langle (S+iV)f,g\rangle=\langle Af,g\rangle
\end{equation}
for any $f\in\mathcal{D}(S+iV)$ and any $g\in\mathcal{D}(S-iV)$.

For the presentation of our results in \cite{FNW}, the notion of dual pairs $(A,\widetilde{A})$ satisfying the so called common core property was particularly useful. Let us restate the definition.
\begin{definition}
Let $(A,\widetilde{A})$ be a dual pair of closed operators. We say that it has the {\bf{common core property}} if there exists a subset $\mathcal{D}\subset\mathcal{D}(A)\cap\mathcal{D}(\widetilde{A})$ such that it is a core for $A$ as well as for $\widetilde{A}$: $$\overline{A\upharpoonright_{\mathcal{D}}}=A\quad\text{and}\quad\widetilde{A}=\overline{\widetilde{A}\upharpoonright_{\mathcal{D}}}\:.$$
\end{definition}
We are now prepared to show the link between dual pairs $(A,\widetilde{A})$ satisfying the common core property and dissipative operators $A$ that can be decomposed according to \eqref{eq:decomp}.
\begin{lemma} \label{thm:ccore} Let $(A,\widetilde{A})$ be a dual pair of closed operators satisfying the common core property with a common core $\mathcal{D}$, where $A$ is dissipative. Then there exist two symmetric operators $S$ and $V\geq 0$ with $\mathcal{D}=\mathcal{D}(S)=\mathcal{D}(V)$ such that
\begin{equation*}
A\upharpoonright_\mathcal{D}=S+iV\quad\text{and}\quad\widetilde{A}\upharpoonright_\mathcal{D}=S-iV\:.
\end{equation*}
Conversely, let $A$ be a dissipative operator of the form $A=S+iV$, where $S$ and $V\geq 0$ are symmetric operators and $\mathcal{D}(A)=\mathcal{D}(S)=\mathcal{D}(V)$. If we define $\widetilde{A}:=S-iV$, where $\mathcal{D}(\widetilde{A})=\mathcal{D}$, then their closures $(\overline{A},\overline{\widetilde{A}})$ form a dual pair that has the common core property. 
\end{lemma}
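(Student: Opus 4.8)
The plan is to treat the two implications separately; the forward direction carries the genuine content, while the converse is essentially bookkeeping built on \eqref{eq:kirchedorf}.

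For the forward direction, I would exploit that $\mathcal{D}\subset\mathcal{D}(A)\cap\mathcal{D}(\widetilde{A})$ to define the ``real'' and ``imaginary'' parts directly on $\mathcal{D}$ by
\[
S:=\tfrac12\bigl(A\upharpoonright_\mathcal{D}+\widetilde{A}\upharpoonright_\mathcal{D}\bigr),\qquad V:=\tfrac{1}{2i}\bigl(A\upharpoonright_\mathcal{D}-\widetilde{A}\upharpoonright_\mathcal{D}\bigr),
\]
so that $\mathcal{D}(S)=\mathcal{D}(V)=\mathcal{D}$ and, by construction, $A\upharpoonright_\mathcal{D}=S+iV$ and $\widetilde{A}\upharpoonright_\mathcal{D}=S-iV$. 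The decomposition identities are then immediate, and what remains is to verify that $S$ and $V$ are symmetric and that $V\geq 0$. The key tool for the symmetry is the fundamental form identity extracted from the dual pair hypothesis: since $A\subset\widetilde{A}^*$ and $\widetilde{A}\subset A^*$, for all $f,g\in\mathcal{D}$ one has $\langle Af,g\rangle=\langle f,\widetilde{A}g\rangle$ and $\langle \widetilde{A}f,g\rangle=\langle f,Ag\rangle$. Substituting these two relations into the definitions of $S$ and $V$ and keeping track of the antilinearity of $\langle\cdot,\cdot\rangle$ in the first slot yields $\langle Sf,g\rangle=\langle f,Sg\rangle$ and $\langle Vf,g\rangle=\langle f,Vg\rangle$ after a short computation, so both operators are symmetric on the dense domain $\mathcal{D}$.

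The main obstacle — and the place where dissipativity of $A$ must enter — is the non-negativity of $V$. Here I would set $f=g$ in the form identity to obtain $\langle f,\widetilde{A}f\rangle=\langle Af,f\rangle=\overline{\langle f,Af\rangle}$, whence $\langle f,Af\rangle-\langle f,\widetilde{A}f\rangle=\langle f,Af\rangle-\overline{\langle f,Af\rangle}=2i\,\Imag\langle f,Af\rangle$. Dividing by $2i$ gives $\langle f,Vf\rangle=\Imag\langle f,Af\rangle$, which is non-negative for every $f\in\mathcal{D}$ precisely because $A$ is dissipative. This establishes $V\geq 0$ and completes the forward direction.

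For the converse, the dual-pair relation is already the content of \eqref{eq:kirchedorf}, which shows $A\subset\widetilde{A}^*$ and $\widetilde{A}\subset A^*$ with common domain $\mathcal{D}$. I would first record that both operators are closable: since $\mathcal{D}=\mathcal{D}(A)=\mathcal{D}(\widetilde{A})$ is dense, the inclusions $\widetilde{A}\subset A^*$ and $A\subset\widetilde{A}^*$ force $A^*$ and $\widetilde{A}^*$ to be densely defined, so $\overline{A}$ and $\overline{\widetilde{A}}$ exist. Because taking the adjoint is insensitive to closure, i.e. $\overline{A}^*=A^*$ and $\overline{\widetilde{A}}^*=\widetilde{A}^*$, and since $\widetilde{A}^*$ and $A^*$ are closed, the inclusions pass to the closures, giving $\overline{A}\subset\overline{\widetilde{A}}^*$ and $\overline{\widetilde{A}}\subset\overline{A}^*$; hence $(\overline{A},\overline{\widetilde{A}})$ is a dual pair of closed operators. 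Finally, $\mathcal{D}$ itself serves as the required common core, since $\overline{A\upharpoonright_\mathcal{D}}=\overline{A}$ and $\overline{\widetilde{A}\upharpoonright_\mathcal{D}}=\overline{\widetilde{A}}$ by construction, which concludes the argument.
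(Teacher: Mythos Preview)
Your proof is correct and follows essentially the same route as the paper: you define $S$ and $V$ exactly as the paper does, derive the decomposition identities by construction, and obtain $V\geq 0$ from $\langle f,Vf\rangle=\Imag\langle f,Af\rangle$ together with dissipativity. The only cosmetic difference is that the paper checks symmetry by showing $\langle\psi,S\psi\rangle\in\R$ and $\langle\psi,V\psi\rangle\in\R$ on the diagonal, whereas you verify the full sesquilinear identity $\langle Sf,g\rangle=\langle f,Sg\rangle$ (and likewise for $V$) via the dual-pair relations; your converse is also somewhat more explicit about closability and passage to closures than the paper's one-line appeal to \eqref{eq:kirchedorf}, but the substance is the same.
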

\begin{proof} If $(A,\widetilde{A})$ is a dual pair satisfying the common core condition, with $\mathcal{D}$ being a common core, we may define
\begin{equation}
S:=\frac{A+\widetilde{A}}{2}\upharpoonright_\mathcal{D}\quad\text{and}\quad V:=\frac{A-\widetilde{A}}{2i}\upharpoonright_\mathcal{D}\;.
\end{equation}
Firstly, observe that $A\upharpoonright_\mathcal{D}=S+iV$ and $\widetilde{A}\upharpoonright_\mathcal{D}=S-iV$. Next, let us show that $S$ and $V$ are symmetric and also that $V\geq 0$. To this end, let $\psi\in\mathcal{D}$ and consider
\begin{align*}
\langle \psi,S\psi\rangle&=\frac{1}{2}\langle \psi,(A+\widetilde{A})\psi\rangle=\frac{1}{2}(\langle \psi,A\psi\rangle+\langle \psi,A^*\psi\rangle)=\Real(\langle\psi,A\psi\rangle)\in\R\\
\langle \psi,V\psi\rangle&=\frac{1}{2i}\langle \psi,(A-\widetilde{A})\psi\rangle=\frac{1}{2i}(\langle \psi,A\psi\rangle-\langle \psi,A^*\psi\rangle)=\Imag(\langle\psi,A\psi\rangle)\geq 0\:,
\end{align*} 
where the inequality follows from dissipativity of $A$.
Now, let $A$ be a dissipative operator of the form $A=S+iV$, where $S$ and $V\geq 0$ are symmetric and $\mathcal{D}(A)=\mathcal{D}(S)=\mathcal{D}(V)$. In \eqref{eq:kirchedorf}, we have already shown that $A$ and $\widetilde{A}:=S-iV$ form a dual pair and so do their closures $(\overline{A},\overline{\widetilde{A}})$, which therefore is a dual pair that has the common core property with common core $\mathcal{D}=\mathcal{D}(S)=\mathcal{D}(V)$.
\end{proof}
\section{The main result}
We are now prepared to prove our main result. Before we proceed, we need to show the following two lemmas:
\begin{lemma} \label{lemma:friedrichsdense}
Let $V$ be a non-negative symmetric operator. Then
\\

i) $\text{\emph{ran}}(V_F^{1/2}\upharpoonright_{\mathcal{D}(V)})$ is dense in $\overline{\text{\emph{ran}}(V_F^{1/2})}$

ii) $\text{\emph{ran}}(V_K^{1/2}\upharpoonright_{\mathcal{D}(V)})$ is dense in $\overline{\text{\emph{ran}}(V_K^{1/2})}$. 
\end{lemma}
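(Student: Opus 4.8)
The plan is to prove the two statements by genuinely different routes, since the domain $\mathcal{D}(V)$ is a form core for the Friedrichs extension but \emph{not}, in general, for the Kre\u\i n--von Neumann extension. Throughout I may replace $V$ by its closure, which affects neither $V_F$, $V_K$ nor the closures of the ranges at issue.

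For part i) I would exploit the defining property of the Friedrichs extension: the form domain $\mathcal{D}(V_F^{1/2})$ is by construction the completion of $\mathcal{D}(V)$ with respect to the norm $(\|\cdot\|^2+\langle\cdot,V\cdot\rangle)^{1/2}$, and on $\mathcal{D}(V)$ this norm coincides with the graph norm $(\|\cdot\|^2+\|V_F^{1/2}\cdot\|^2)^{1/2}$ of $V_F^{1/2}$. Hence $\mathcal{D}(V)$ is a core for $V_F^{1/2}$. Given any $h=V_F^{1/2}\phi\in\ran(V_F^{1/2})$ with $\phi\in\mathcal{D}(V_F^{1/2})$, I would choose $f_n\in\mathcal{D}(V)$ converging to $\phi$ in this graph norm; then $V_F^{1/2}f_n\to V_F^{1/2}\phi=h$, so that $h$ lies in $\overline{\ran(V_F^{1/2}\upharpoonright_{\mathcal{D}(V)})}$. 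As the reverse inclusion is trivial, this would finish i).

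For part ii) the same approximation fails, and this is the main obstacle: because $\mathcal{D}(V)$ need not be a core for $V_K^{1/2}$, one cannot in general approximate a given $V_K^{1/2}\phi$ by vectors $V_K^{1/2}f_n$ with $f_n\in\mathcal{D}(V)$. Here I would instead invoke Corollary \ref{coro:andonishio} with the non-negative selfadjoint extension $\widehat{V}=V_K$. For any $h\in\mathcal{D}(V_K^{1/2})$ it gives $\|V_K^{1/2}h\|^2=\sup|\langle h,V_K^{1/2}g\rangle|^2$, the supremum ranging over normalized $g\in\ran(V_K^{1/2}\upharpoonright_{\mathcal{D}(V)})=:\mathcal{R}_0$. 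Since every such $g=V_K^{1/2}f$ with $f\in\mathcal{D}(V)\subset\mathcal{D}(V_K)$ lies in $\mathcal{D}(V_K^{1/2})$, selfadjointness of $V_K^{1/2}$ turns this into $\langle h,V_K^{1/2}g\rangle=\langle V_K^{1/2}h,g\rangle$, so the supremum equals $\|P\,V_K^{1/2}h\|^2$, where $P$ denotes the orthogonal projection onto $\overline{\mathcal{R}_0}$.

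The decisive step is the last one: the resulting identity $\|V_K^{1/2}h\|=\|P\,V_K^{1/2}h\|$ forces $V_K^{1/2}h\in\overline{\mathcal{R}_0}$, because the Pythagorean relation $\|V_K^{1/2}h\|^2=\|P\,V_K^{1/2}h\|^2+\|(\idty-P)V_K^{1/2}h\|^2$ leaves no room for a component orthogonal to $\overline{\mathcal{R}_0}$. Since $h\in\mathcal{D}(V_K^{1/2})$ is arbitrary, this yields $\ran(V_K^{1/2})\subset\overline{\mathcal{R}_0}=\overline{\ran(V_K^{1/2}\upharpoonright_{\mathcal{D}(V)})}$, and the reverse inclusion is again immediate, completing ii).
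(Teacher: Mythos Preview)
Your proposal is correct and follows essentially the same route as the paper: for i) you use that $\mathcal{D}(V)$ is a form core for $V_F^{1/2}$, and for ii) you invoke Corollary~\ref{coro:andonishio} with $\widehat{V}=V_K$ and move $V_K^{1/2}$ to the other side of the inner product, exactly as the paper does. The only cosmetic difference is that the paper finishes ii) by a contradiction argument (assuming a unit vector in $\overline{\ran(V_K^{1/2})}$ orthogonal to $\mathcal{R}_0$ and deriving a clash with the supremum formula), whereas you phrase the same idea directly via the orthogonal projection $P$ onto $\overline{\mathcal{R}_0}$ and the Pythagorean identity; the underlying mechanism is identical.
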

\begin{proof}
i) By construction of the Friedrichs extension, we know that for any $\psi\in\mathcal{D}(V_F^{1/2})$, there exists a sequence $\{\psi_n\}\subset\mathcal{D}(V)$, such that 
\begin{equation*}
\lim_{n\rightarrow\infty}(\|\psi-\psi_n\|^2+\|V_F^{1/2}(\psi-\psi_n)\|^2)=0\:,
\end{equation*}
which implies in particular that $\lim_{n\rightarrow\infty} V_F^{1/2}\psi_n=V_F^{1/2}\psi$, i.e. $\ran(V_F^{1/2})\subset\overline{\ran(V_F^{1/2}\upharpoonright_{\mathcal{D}(V)})}$. On the other hand, since $\ran(V_F^{1/2}\upharpoonright_{\mathcal{D}(V)})\subset\ran(V_F^{1/2})$, the assertion follows from taking closures.

ii) Any element of $\ran(V_K^{1/2})$ is of the form $V_K^{1/2}h$, where $h\in\mathcal{D}(V_K^{1/2})$. By Corollary \ref{coro:andonishio} with the choice $\widehat{V}=V_K$, we have that
\begin{align} \label{eq:jeremy}
\|V_K^{1/2}h\|^2=\sup\left\{|\langle V_K^{1/2}h,g\rangle|^2, g\in\ran(V_K^{1/2}\upharpoonright_{\mathcal{D}(V)}):\|g\|=1\right\}\:.
\end{align}
But this implies that $\ran(V_K^{1/2}\upharpoonright_{\mathcal{D}(V)})$ is dense in $\overline{\ran(V_K^{1/2})}$. To see why, assume that there exists a $\varphi\in \overline{\ran(V_K^{1/2})}$ such that $\|\varphi\|=1$ and
$\langle \varphi,g\rangle=0$ for all $g\in\ran(V_K^{1/2}\upharpoonright_{\mathcal{D}(V)})$. Take a  $V_K^{1/2}h\in\ran(V_K^{1/2})$, with $\|V_K^{1/2}h\|=1$ such that $\|V_K^{1/2}h-\varphi\|^2<\varepsilon$ for some $0<\varepsilon<1$ small enough. Then, for any $g\in\ran(V_K^{1/2}\upharpoonright_{\mathcal{D}(V)})$, we get
\begin{equation*}
|\langle V_K^{1/2}h,g\rangle|^2=|\langle V_K^{1/2}h-\varphi,g\rangle|^2\leq \|V_K^{1/2}h-\varphi\|^2\|g\|^2\leq \varepsilon\|g\|^2\:.
\end{equation*}
Taking the supremum over all $g\in\ran(V_K^{1/2}\upharpoonright_{\mathcal{D}(V)})$ with $\|g\|=1$, we arrive at a contradiction, since the supremum of the left hand side is $1$ whereas the supremum of the right hand side is $\varepsilon<1$. This shows the lemma.
\end{proof}

\begin{lemma} \label{lemma:frischerfisch} Let $V$ be a non-negative symmetric operator and let $V_F$ and $V_K$ denote its Friedrichs, resp. its Kre\u\i n-von Neumann extension. Then there exists a partial isometry $\mathcal{U}$ on $\mathcal{H}$ such that 
\begin{equation} \label{eq:kuckuck}
V_K^{1/2}h=\mathcal{U}{V}_F^{1/2}h
\end{equation}
for all $h\in\mathcal{D}(V_F^{1/2})$. The map $\mathcal{U}$ is an isometry on $\overline{\ran(V_F^{1/2})}$ and its range $\ran(\mathcal{U})$ is contained in $\overline{\ran(V_K^{1/2})}$.
\end{lemma}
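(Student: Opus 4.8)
The plan is to build $\mathcal{U}$ by hand on the ranges of the square roots restricted to $\mathcal{D}(V)$, where an isometry is immediately visible, and then to extend it first by density and finally by a closedness argument. First I would record the elementary but crucial observation that both $V_F$ and $V_K$ extend $V$, so that $V_F f = Vf = V_K f$ for every $f\in\mathcal{D}(V)$. Consequently, for such $f$,
\[
\|V_F^{1/2}f\|^2=\langle f,V_F f\rangle=\langle f,Vf\rangle=\langle f,V_K f\rangle=\|V_K^{1/2}f\|^2 .
\]
This identity lets me define a map $\mathcal{U}_0$ on $\ran(V_F^{1/2}\upharpoonright_{\mathcal{D}(V)})$ by $\mathcal{U}_0 V_F^{1/2}f:=V_K^{1/2}f$. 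The identity shows at once that $\mathcal{U}_0$ is well defined (if $V_F^{1/2}f=V_F^{1/2}f'$ then $\|V_K^{1/2}(f-f')\|=\|V_F^{1/2}(f-f')\|=0$) and that it is isometric, with image exactly $\ran(V_K^{1/2}\upharpoonright_{\mathcal{D}(V)})$.

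Next I would extend $\mathcal{U}_0$. By Lemma \ref{lemma:friedrichsdense}, its domain $\ran(V_F^{1/2}\upharpoonright_{\mathcal{D}(V)})$ is dense in $\overline{\ran(V_F^{1/2})}$ and its image is dense in $\overline{\ran(V_K^{1/2})}$. Since $\mathcal{U}_0$ is isometric, it extends uniquely by continuity to an isometry from $\overline{\ran(V_F^{1/2})}$ onto $\overline{\ran(V_K^{1/2})}$. Setting this extension equal to zero on the orthogonal complement $\overline{\ran(V_F^{1/2})}^\perp$ produces a partial isometry $\mathcal{U}$ on $\mathcal{H}$ whose initial space is $\overline{\ran(V_F^{1/2})}$ and whose range lies in $\overline{\ran(V_K^{1/2})}$, which already settles the last two assertions of the lemma.

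It remains to check that the intertwining relation $V_K^{1/2}h=\mathcal{U}V_F^{1/2}h$ holds not merely for $h\in\mathcal{D}(V)$ but for all $h\in\mathcal{D}(V_F^{1/2})$; this is the step that requires the most care and is the main obstacle. Given such an $h$, the construction of the Friedrichs extension supplies a sequence $\{h_n\}\subset\mathcal{D}(V)$ with $h_n\to h$ and $V_F^{1/2}h_n\to V_F^{1/2}h$. Because $\mathcal{U}$ is bounded, $\mathcal{U}V_F^{1/2}h_n\to\mathcal{U}V_F^{1/2}h$, while by construction $\mathcal{U}V_F^{1/2}h_n=V_K^{1/2}h_n$. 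Thus $h_n\to h$ and $V_K^{1/2}h_n\to\mathcal{U}V_F^{1/2}h$, and since $V_K^{1/2}$ is selfadjoint, hence closed, I conclude that $h\in\mathcal{D}(V_K^{1/2})$ and $V_K^{1/2}h=\mathcal{U}V_F^{1/2}h$. The delicate point is precisely this passage from the core $\mathcal{D}(V)$ to all of $\mathcal{D}(V_F^{1/2})$: the naive route would be to argue that $V_K^{1/2}h_n$ converges to $V_K^{1/2}h$ directly, but that presupposes $h\in\mathcal{D}(V_K^{1/2})$, whereas invoking the closedness of $V_K^{1/2}$ sidesteps this and yields the membership $h\in\mathcal{D}(V_K^{1/2})$ for free.
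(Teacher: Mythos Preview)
Your proof is correct, but it takes a slightly different route from the paper's. The paper invokes the Ando--Nishio characterization (Proposition~\ref{thm:nett}) to show directly that $\|V_K^{1/2}h\|=\|V_F^{1/2}h\|$ for every $h\in\mathcal{D}(V_F^{1/2})$; this lets it define the isometry $\mathcal{U}_0$ on all of $\ran(V_F^{1/2})$ at once, so the intertwining relation \eqref{eq:kuckuck} is automatic on the full form domain and no closedness argument is needed. You instead start with the elementary identity $\|V_F^{1/2}f\|^2=\langle f,Vf\rangle=\|V_K^{1/2}f\|^2$ on $\mathcal{D}(V)$ only, build $\mathcal{U}$ by density via Lemma~\ref{lemma:friedrichsdense}, and then recover the intertwining on all of $\mathcal{D}(V_F^{1/2})$ from the fact that $\mathcal{D}(V)$ is a form core for $V_F$ together with the closedness of $V_K^{1/2}$. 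Your approach is more self-contained in that it avoids Ando--Nishio entirely and obtains the inclusion $\mathcal{D}(V_F^{1/2})\subset\mathcal{D}(V_K^{1/2})$ as a byproduct rather than appealing to $V_K\leq V_F$; the paper's approach is shorter once Proposition~\ref{thm:nett} is on the table. As a small bonus, your density argument actually shows $\ran(\mathcal{U})=\overline{\ran(V_K^{1/2})}$, slightly sharper than the inclusion stated in the lemma.
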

\begin{proof} Since we have that $V_K\leq V_F$, it is clear that $\mathcal{D}(V_F^{1/2})\subset\mathcal{D}(V_K^{1/2})$. Moreover, by Proposition \ref{thm:nett}, for any $h\in\mathcal{D}(V_F^{1/2})\subset\mathcal{D}(V_K^{1/2})$, we have that 
\begin{align*}
\|V_K^{1/2}h\|^2&=\sup_{f\in\mathcal{D}(V):Vf\neq 0}\frac{|\langle h,Vf\rangle|^2}{\langle f,Vf\rangle}=\sup_{f\in\mathcal{D}(V):Vf\neq 0}\frac{|\langle h,V_F^{1/2}V_F^{1/2}f\rangle|^2}{\langle f,V_F^{1/2}V_F^{1/2}f\rangle}\\&=\sup_{f\in\mathcal{D}(V):Vf\neq 0}\frac{|\langle V_F^{1/2} h,V_F^{1/2}f\rangle|^2}{\|V_F^{1/2}f\|^2}=\|V_F^{1/2}h\|^2\:,
\end{align*}
where we have used that $\ran (V_F^{1/2}\upharpoonright_{\mathcal{D}(V)})$ is dense in $\overline{\ran(V_F^{1/2})}$ by Lemma  \ref{lemma:friedrichsdense}.
This implies that the linear map
\begin{align*}
\mathcal{U}_0: \quad \ran(V_F^{1/2})&\rightarrow\ran\left(V_K^{1/2}\upharpoonright_{\mathcal{D}(V_F^{1/2})}\right)\\
V_F^{1/2}h&\mapsto V_K^{1/2}h
\end{align*}
is isometric. Since, trivially, $\ran(V_F^{1/2})$ is dense in $\overline{\ran(V_F^{1/2})}$, there exists a unique isometric extension $\mathcal{U}_0\subset\mathcal{U}$ on $\overline{\ran(V_F^{1/2})}$. Setting
$\mathcal{U}k=0$ for all $k\in\ker(V_F^{1/2})=\ran(V_F^{1/2})^\perp$ defines $\mathcal{U}$ as a partial isometry on the whole Hilbert space $\mathcal{H}$. Here, $\mathcal{M}^\perp$ denotes the orthogonal complement of a linear space $\mathcal{M}$. Moreover, since $$\ran(\mathcal{U}_0)=\ran\left(V_K^{1/2}\upharpoonright_{\mathcal{D}(V_F^{1/2})}\right)\subset\ran(V_K^{1/2})\:,$$ this implies that $\ran(\mathcal{U})$ is contained in $\overline{\ran(V_K^{1/2})}$ and thus the lemma.
\end{proof}

Given a dual pair $(A,\widetilde{A})$, let us introduce the following convenient way of parametrizing all extensions of $A$ which have domain contained in $\mathcal{D}(\widetilde{A}^*)$:
\begin{definition} \label{def:quantz}
Let $(A,\widetilde{A})$ be a dual pair, where $A$ is dissipative and $\widetilde{A}$ is antidissipative. Let $\mathcal{V}\subset\mathcal{D}(\widetilde{A}^*)//\mathcal{D}(A)$ be a linear space, which means that $\mathcal{V}\subset\mathcal{D}(\widetilde{A}^*)$ and $\mathcal{V}\cap\mathcal{D}(A)=\{0\}$. Moreover, let $\mathcal{L}$ be a linear operator from $\mathcal{V}$ into $\mathcal{H}$. Then, the operator $A_{\mathcal{V},\mathcal{L}}$ is given by
\begin{align*}
A_{\mathcal{V},\mathcal{L}}:\qquad\mathcal{D}(A_{\mathcal{V},\mathcal{L}})&=\mathcal{D}(A)\dot{+}\mathcal{V}\\
(f+v)&\mapsto \widetilde{A}^*(f+v)+\mathcal{L}v\:,
\end{align*}
where $f\in\mathcal{D}(A)$ and $v\in\mathcal{V}$. If $\mathcal{L}$ is the zero-operator, i.e. $\mathcal{L}=0$, we define $A_{\mathcal{V},0}=:A_\mathcal{V}$.
\end{definition}
Note that the operator $\mathcal{L}$ can be interpreted as the deviation of $A_{\mathcal{L,V}}$ from $\widetilde{A}^*$, since for any $v\in\mathcal{V}$, we get that
\begin{equation*}
(\widetilde{A}^*-A_{\mathcal{V,L}})v=\mathcal{L}v\:.
\end{equation*}
Let us now show the main theorem:
\begin{theorem} \label{thm:alter} Let $(A,\widetilde{A})$ be a dual pair that has the common core property, where $A$ is dissipative.
Moreover, assume that 

$$v\in\mathcal{D}(V_K^{1/2}) \qquad\text{and}\qquad \mathcal{L}v\in \text{\emph{ran}}(V_F^{1/2})=\mathcal{D}(V_F^{-1/2})$$

for all $v\in\mathcal{V}$.
Then, $A_{\mathcal{V},\mathcal{L}}$ is dissipative if and only if for all $v\in\mathcal{V}$ we have
\begin{equation} \label{eq:gstinkert}
\Imag\langle v,(\widetilde{A}^*+\mathcal{L})v\rangle\geq\frac{1}{4}\|\mathcal{U}V_F^{-1/2}\mathcal{L}v+2iV_K^{1/2}v\|^2\:.
\end{equation}
Here, $V^{-1/2}_F$ denotes the inverse of $V_F^{1/2}$ as an operator in $\overline{\text{\emph{ran}}(V_F^{1/2})}$, which is given by
\begin{align}
V_F^{-1/2}: \qquad \mathcal{D}(V_F^{-1/2})=\text{\emph{ran}} V_F^{1/2}&\rightarrow\mathcal{D}(V_F^{1/2})\cap\overline{\text{\emph{ran}}(V_F^{1/2})}\notag\\
V_F^{1/2}f&\mapsto f\:,
\label{eq:krakau}
\end{align}
which a well-defined non-negative selfadjoint operator on the Hilbert space $\overline{\ran(V_F^{1/2})}$.
The operator $\mathcal{U}$ is the partial isometry as defined in Lemma \ref{lemma:frischerfisch}.
\end{theorem}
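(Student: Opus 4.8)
The plan is to test the dissipativity inequality $\Imag\langle\psi,A_{\mathcal{V},\mathcal{L}}\psi\rangle\geq 0$ directly on a generic $\psi=f+v$ with $f\in\mathcal{D}(A)$, $v\in\mathcal{V}$, and then to read off the condition by minimising the resulting expression over $f$ for each fixed $v$ (the decoupling being legitimate because $\mathcal{D}(A_{\mathcal{V},\mathcal{L}})=\mathcal{D}(A)\dot{+}\mathcal{V}$ is a direct sum). Since $A_{\mathcal{V},\mathcal{L}}(f+v)=\widetilde{A}^*f+\widetilde{A}^*v+\mathcal{L}v$, taking imaginary parts first splits the quantity into a $\widetilde{A}^*$-part and an $\mathcal{L}$-part; expanding $\langle f+v,\widetilde{A}^*(f+v)\rangle$ into its four bilinear contributions, the diagonal term produces $\Imag\langle f,\widetilde{A}^*f\rangle=\Imag\langle f,Af\rangle$, the pure-$v$ term combines with the $\mathcal{L}$-contribution $\Imag\langle v,\mathcal{L}v\rangle$ into $\Imag\langle v,(\widetilde{A}^*+\mathcal{L})v\rangle$ (the left-hand side of \eqref{eq:gstinkert}), and the two mixed terms must be treated with the adjoint relation.

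I would carry out the mixed-term computation first on the common core, i.e.\ for $f\in\mathcal{D}$, where both $\widetilde{A}^*f=(S+iV)f$ and, crucially, $f\in\mathcal{D}(\widetilde{A})$, so that $\langle\widetilde{A}^*v,f\rangle=\langle v,\widetilde{A}f\rangle=\langle v,(S-iV)f\rangle$. A short calculation then collapses the two mixed terms to $2\Real\langle v,Vf\rangle$, and since $f\in\mathcal{D}(V)$ and $v\in\mathcal{D}(V_K^{1/2})$ one may write $Vf=V_K^{1/2}V_K^{1/2}f$ to obtain $2\Real\langle V_K^{1/2}v,V_K^{1/2}f\rangle$. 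For the $\mathcal{L}$-part, the hypothesis $\mathcal{L}v\in\ran(V_F^{1/2})$ lets me set $\mathcal{L}v=V_F^{1/2}(V_F^{-1/2}\mathcal{L}v)$ and move the square root onto $f\in\mathcal{D}(V)\subset\mathcal{D}(V_F^{1/2})$, giving $\Imag\langle V_F^{1/2}f,V_F^{-1/2}\mathcal{L}v\rangle$. Using $\Imag\langle f,Af\rangle=\langle f,Vf\rangle=\|V_F^{1/2}f\|^2$ on $\mathcal{D}$, this assembles into the master identity
$$\Imag\langle\psi,A_{\mathcal{V},\mathcal{L}}\psi\rangle=\|V_F^{1/2}f\|^2+2\Real\langle V_K^{1/2}v,V_K^{1/2}f\rangle+\Imag\langle V_F^{1/2}f,V_F^{-1/2}\mathcal{L}v\rangle+\Imag\langle v,(\widetilde{A}^*+\mathcal{L})v\rangle.$$

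Next I would upgrade this identity from $f\in\mathcal{D}$ to all $f\in\mathcal{D}(A)$ by continuity. The key auxiliary fact is that $\mathcal{D}(A)\subset\mathcal{D}(V_F^{1/2})$ with $\|V_F^{1/2}f\|^2=\Imag\langle f,Af\rangle$: approximating $f\in\mathcal{D}(A)$ by $f_n\in\mathcal{D}$ in the graph norm of $A$, the estimate $\|V_F^{1/2}(f_n-f_m)\|^2=\Imag\langle f_n-f_m,A(f_n-f_m)\rangle\leq\|f_n-f_m\|\,\|A(f_n-f_m)\|$ shows that $\{V_F^{1/2}f_n\}$ is Cauchy, and closedness of $V_F^{1/2}$ gives the claim. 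Via \eqref{eq:kuckuck} one also has $V_K^{1/2}f=\mathcal{U}V_F^{1/2}f$, so every term on the right of the master identity is continuous in the graph norm of $A$; since the left-hand side is too, the identity persists for all $f\in\mathcal{D}(A)$.

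Finally, for fixed $v$ I would minimise the $f$-dependent part over $f\in\mathcal{D}(A)$. Setting $y:=V_K^{1/2}f=\mathcal{U}V_F^{1/2}f$ and using that $\mathcal{U}$ is isometric on $\overline{\ran(V_F^{1/2})}$, the three $f$-terms become $\|y\|^2+2\Real\langle V_K^{1/2}v,y\rangle+\Imag\langle y,\mathcal{U}V_F^{-1/2}\mathcal{L}v\rangle=\|y\|^2+\Real\langle y,\zeta\rangle$ with $\zeta:=2V_K^{1/2}v-i\,\mathcal{U}V_F^{-1/2}\mathcal{L}v$. As $f$ runs through $\mathcal{D}(A)\supset\mathcal{D}(V)$, the vector $y$ runs through a dense subset of $\overline{\ran(V_K^{1/2})}$ by Lemma \ref{lemma:friedrichsdense}, and $\zeta$ lies in this same space because $\mathcal{U}V_F^{-1/2}\mathcal{L}v\in\ran(\mathcal{U})\subset\overline{\ran(V_K^{1/2})}$ by Lemma \ref{lemma:frischerfisch}; hence completing the square gives $\inf_f(\,\cdot\,)=-\tfrac14\|\zeta\|^2$. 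Since $\|\zeta\|=\|i\zeta\|=\|\mathcal{U}V_F^{-1/2}\mathcal{L}v+2iV_K^{1/2}v\|$, the requirement $\Imag\langle\psi,A_{\mathcal{V},\mathcal{L}}\psi\rangle\geq 0$ for all $f$ is equivalent, for each $v$, precisely to \eqref{eq:gstinkert}. I expect the main obstacle to be the passage from the common core to the full domain $\mathcal{D}(A)$ — in particular establishing $\mathcal{D}(A)\subset\mathcal{D}(V_F^{1/2})$ and the continuity of all terms — together with the careful bookkeeping of which dense range ($\overline{\ran(V_F^{1/2})}$ versus $\overline{\ran(V_K^{1/2})}$) the minimisation variable inhabits, so that completing the square reproduces exactly the mixed Friedrichs/Kre\u\i n norm on the right-hand side of \eqref{eq:gstinkert}.
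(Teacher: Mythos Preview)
Your proposal is correct and follows essentially the same route as the paper: both compute $\Imag\langle f+v,A_{\mathcal{V},\mathcal{L}}(f+v)\rangle$ on the common core, rewrite the $f$-dependent terms via $V_K^{1/2}f=\mathcal{U}V_F^{1/2}f$, and then complete the square over the dense range $\ran(V_K^{1/2}\upharpoonright_{\mathcal{D}(V)})\subset\overline{\ran(V_K^{1/2})}$ (the paper splitting this into a Cauchy--Schwarz estimate for sufficiency and an explicit approximating sequence for necessity, whereas you do both at once via the infimum). Your extension of the master identity from $\mathcal{D}$ to all of $\mathcal{D}(A)$ is correct but unnecessary --- the paper works entirely on the core, since $\Imag\langle f+v,A_{\mathcal{V},\mathcal{L}}(f+v)\rangle$ is continuous in $f$ with respect to the graph norm of $A$, so non-negativity on $\mathcal{D}\dot{+}\mathcal{V}$ already implies it on $\mathcal{D}(A)\dot{+}\mathcal{V}$.
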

\begin{proof}
Let us start be showing that the above conditions are sufficient. To this end, let $\mathcal{D}\subset\mathcal{D}(A)\cap\mathcal{D}(\widetilde{A})$ denote a common core for $A$ and $\widetilde{A}$. For any $f\in\mathcal{D}$ and any $v\in\mathcal{V}$, we then get
\begin{align*}
\Imag\langle f&+v,A_{\mathcal{V},\mathcal{L}}(f+v)\rangle=\Imag\langle f+v,\widetilde{{A}}^*(f+v)\rangle+\Imag \langle f+v,\mathcal{L}v\rangle\\&=\langle f,Vf\rangle+\Imag\langle v,2iVf\rangle+\Imag\langle v,(\widetilde{A}^*+\mathcal{L})v\rangle+\Imag\langle f,\mathcal{L}v\rangle\\
&=\|V_K^{1/2}f\|^2+\Imag\langle v,2iV_K^{1/2}V_K^{1/2}f\rangle+\Imag\langle v,(\widetilde{A}^*+\mathcal{L})v\rangle+\Imag\langle V_F^{-1/2}V_F^{1/2}f,\mathcal{L}v\rangle\\
&=\|V_K^{1/2}f\|^2+\Imag\langle V_K^{1/2}v,2iV_K^{1/2}f\rangle+\Imag\langle v,(\widetilde{A}^*+\mathcal{L})v\rangle+\Imag\langle \mathcal{U}V_F^{1/2}f, \mathcal{U}V_F^{-1/2}\mathcal{L}v\rangle\\
&=\|V_K^{1/2}f\|^2+\Imag\langle V_K^{1/2}v,2iV_K^{1/2}f\rangle+\Imag\langle v,(\widetilde{A}^*+\mathcal{L})v\rangle+\Imag\langle V_K^{1/2}f, \mathcal{U}V_F^{-1/2}\mathcal{L}v\rangle\\
&=\|V_K^{1/2}f\|^2+\Imag\langle v,(\widetilde{A}^*+\mathcal{L})v\rangle+\Imag\langle V_K^{1/2}f,(\mathcal{U}V_F^{-1/2}\mathcal{L}+2iV_K^{1/2})v\rangle\\
&\geq \|V_K^{1/2}f\|^2+\frac{1}{4}\|\mathcal{U}V_F^{-1/2}\mathcal{L}v+2iV_K^{1/2}v\|^2+\Imag\langle V_K^{1/2}f,(\mathcal{U}V_F^{-1/2}\mathcal{L}+2iV_K^{1/2})v\rangle\\\
&\geq \|V_K^{1/2}f\|^2+\frac{1}{4}\|\mathcal{U}V_F^{-1/2}\mathcal{L}v+2iV_K^{1/2}v\|^2-\|V_K^{1/2}f\| \|(\mathcal{U}V_F^{-1/2}\mathcal{L}+2iV_K^{1/2})v\|\\
&=\left(\|V_K^{1/2}f\|-\frac{1}{2}\|\mathcal{U}V_F^{-1/2}\mathcal{L}v+2iV_K^{1/2}v\|\right)^2\geq 0\:.
\end{align*}
Let us now show that Condition \eqref{eq:gstinkert} is also necessary. Assume that it is not satisfied, i.e. that there exists a $v\in\mathcal{V}$ such that 
\begin{equation} \label{eq:schlechti}
\Imag\langle v,(\widetilde{A}^*+\mathcal{L})v\rangle-\frac{1}{4}\|\mathcal{U}V_F^{-1/2}\mathcal{L}v+2iV_K^{1/2}v\|^2\leq -\varepsilon
\end{equation}
for some $\varepsilon>0$. By Lemma \ref{lemma:frischerfisch}, we have that
$(\mathcal{U}V_F^{-1/2}\mathcal{L}v+2iV_K^{1/2}v)\in\overline{\ran(V_K^{1/2})}$. Thus, by Lemma \ref{lemma:friedrichsdense} ii), there exists a sequence $\{f_n\}\subset{\mathcal{D}(V)}$ such that
$$ V_K^{1/2}f_n\overset{n\rightarrow\infty}{\longrightarrow}\frac{-i}{2}(\mathcal{U}V_F^{-1/2}\mathcal{L}v+2iV_K^{1/2}v)\:,
$$
which means by \eqref{eq:schlechti} that 
\begin{align*}
&\Imag\langle f_n+v,A_{\mathcal{V,L}}(f_n+v)\rangle\\&=\|V_K^{1/2}f_n\|^2+\Imag\langle v,(\widetilde{A}^*+\mathcal{L})v\rangle+\Imag\langle V_K^{1/2}f_n,\mathcal{U}V_F^{-1/2}\mathcal{L}v+2iV_K^{1/2}v\rangle \overset{n\rightarrow\infty}{\longrightarrow}-\varepsilon<0\:,
\end{align*}
which means that $A_{\mathcal{V,L}}$ is not dissipative in this case. This shows the theorem.
\end{proof}
Note that for the proof of Theorem \ref{thm:alter}, we have assumed that 
$$v\in\mathcal{D}(V_K^{1/2}) \qquad\text{and}\qquad \mathcal{L}v\in \text{\emph{ran}}(V_F^{1/2})=\mathcal{D}(V_F^{-1/2})\:.$$
Let us now show that given either condition, the other is necessary for (we will comment on the case that neither condition is satisfied after the proof of the following theorem).

\begin{theorem} Let $(A,\widetilde{A})$ be a dual pair satisfying the common core condition, where $A$ is dissipative. 

i) If $\ran(\mathcal{L})\subset\ran(V_F^{1/2})$, then it is necessary that $\mathcal{V}\subset\mathcal{D}(V_K^{1/2})$ for $A_{\mathcal{V,L}}$ to be dissipative.

ii) If $\mathcal{V}\subset\mathcal{D}(V_K^{1/2})$, then it is necessary that $\ran(\mathcal{L})\subset\ran(V_F^{1/2})$ for $A_{\mathcal{V,L}}$ to be dissipative.
\label{thm:kuckuckswalzer}
\end{theorem}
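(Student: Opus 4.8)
The plan is to reduce both statements to a single mechanism: if one of the two membership conditions fails, the ``bad'' term in the quadratic form $\psi\mapsto\Imag\langle\psi,A_{\mathcal{V},\mathcal{L}}\psi\rangle$ becomes unbounded relative to the part one can control, and a suitable scaling then drives the form to $-\infty$. I would start from the identity already established in the proof of Theorem \ref{thm:alter}: for $f\in\mathcal{D}$ (a common core, so $\mathcal{D}=\mathcal{D}(V)$) and $v\in\mathcal{V}$,
\begin{equation*}
\Imag\langle f+v,A_{\mathcal{V},\mathcal{L}}(f+v)\rangle=\langle f,Vf\rangle+2\Real\langle v,Vf\rangle+\Imag\langle f,\mathcal{L}v\rangle+C\:,
\end{equation*}
where $C:=\Imag\langle v,(\widetilde{A}^*+\mathcal{L})v\rangle$ is a finite constant not depending on $f$. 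Replacing $f$ by $tf$ with $t\in\C$ and writing $a:=\langle f,Vf\rangle\geq0$, $b:=\langle v,Vf\rangle$, $c:=\langle f,\mathcal{L}v\rangle$, the right-hand side becomes $a|t|^2+2\Real(tb)+\Imag(\bar t c)+C$, which for $a>0$ is a coercive real quadratic in $(\Real t,\Imag t)$ whose minimum over $t$ equals
\begin{equation*}
C-\frac{1}{4a}\left(4|b|^2+4(\Real b\,\Imag c+\Imag b\,\Real c)+|c|^2\right)\:.
\end{equation*}
The entire argument consists in exhibiting a sequence $\{f_n\}\subset\mathcal{D}$ along which this minimum tends to $-\infty$: that produces elements $t_nf_n+v\in\mathcal{D}(A_{\mathcal{V},\mathcal{L}})$ with negative imaginary form, contradicting dissipativity.

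For part i) I would argue by contraposition: assume $\ran(\mathcal{L})\subset\ran(V_F^{1/2})$ but $v\notin\mathcal{D}(V_K^{1/2})$ for some $v\in\mathcal{V}$. By the Ando--Nishio characterization (Proposition \ref{thm:nett}), the failure of $v\in\mathcal{D}(V_K^{1/2})$ means precisely that $\sup_{f\in\mathcal{D}(V),\,Vf\neq0}|b|^2/a=\infty$, so I can choose $f_n\in\mathcal{D}$ with $a_n>0$ and $|b_n|^2/a_n\to\infty$. Since $\mathcal{L}v\in\ran(V_F^{1/2})$, setting $w:=V_F^{-1/2}\mathcal{L}v$ gives $c_n=\langle f_n,\mathcal{L}v\rangle=\langle V_F^{1/2}f_n,w\rangle$, hence $|c_n|\leq\|w\|\sqrt{a_n}$ and $|c_n|^2/a_n$ stays bounded. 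In the minimum formula the term $|b_n|^2/a_n\to\infty$ then dominates, the cross term being controlled by $\|w\|\sqrt{|b_n|^2/a_n}$ and $|c_n|^2/(4a_n)$ being bounded; thus the minimum tends to $-\infty$ and $A_{\mathcal{V},\mathcal{L}}$ is not dissipative.

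Part ii) is the mirror image and is where the real work lies. Assume $\mathcal{V}\subset\mathcal{D}(V_K^{1/2})$ but $\mathcal{L}v\notin\ran(V_F^{1/2})$ for some $v$. Now $b$ is the controllable term, since $v\in\mathcal{D}(V_K^{1/2})$ gives $b=\langle V_K^{1/2}v,V_K^{1/2}f\rangle$ and $|b|\leq\|V_K^{1/2}v\|\sqrt{a}$, so $|b|^2/a$ is bounded. The obstacle is to turn $\mathcal{L}v\notin\ran(V_F^{1/2})$ into a blow-up $\sup_{f\in\mathcal{D}(V),\,Vf\neq0}|c|^2/a=\infty$. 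For this I would prove the Douglas/Ando--Nishio-type range criterion
\begin{equation*}
\mathcal{L}v\in\ran(V_F^{1/2})\iff\sup_{f\in\mathcal{D}(V),\,Vf\neq0}\frac{|\langle f,\mathcal{L}v\rangle|^2}{\langle f,Vf\rangle}<\infty\:,
\end{equation*}
using $\langle f,Vf\rangle=\|V_F^{1/2}f\|^2$: the forward direction is Cauchy--Schwarz applied to $\mathcal{L}v=V_F^{1/2}w$, and for the converse the finiteness of the supremum makes $V_F^{1/2}f\mapsto\langle f,\mathcal{L}v\rangle$ a bounded functional on $\ran(V_F^{1/2}\upharpoonright_{\mathcal{D}(V)})$, which is dense in $\overline{\ran(V_F^{1/2})}$ by Lemma \ref{lemma:friedrichsdense} i); its Riesz representative $w$ satisfies $\mathcal{L}v=V_F^{1/2}w$ because $\mathcal{D}(V)$ is a form core for $V_F$. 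A short argument (testing against $f+s f_0$ with $Vf_0\neq0$ and letting $s\to0$) shows that finiteness of the supremum also forces $\langle f,\mathcal{L}v\rangle=0$ whenever $Vf=0$, which is exactly what makes the functional well defined. Granting this criterion, $\mathcal{L}v\notin\ran(V_F^{1/2})$ yields $f_n$ with $|c_n|^2/a_n\to\infty$; now $c$ is dominant and $b$ bounded, so the same computation as above sends the minimum to $-\infty$ and dissipativity fails.

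The main obstacle is precisely this range criterion in part ii): unlike part i), where Proposition \ref{thm:nett} is available off the shelf, here I must characterize $\ran(V_F^{1/2})$ by testing only against the common core $\mathcal{D}(V)$ rather than $\mathcal{D}(V_F)$, and I must deal with $\ker(V_F^{1/2})$ carefully so that the representing functional is genuinely well defined. Lemma \ref{lemma:friedrichsdense} i) (density of $\ran(V_F^{1/2}\upharpoonright_{\mathcal{D}(V)})$) together with the form-core property of $\mathcal{D}(V)$ are exactly the ingredients that make this reduction go through, after which both parts collapse onto the single scaling argument described above.
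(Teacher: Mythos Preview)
Your argument is correct, and the two parts do collapse onto the single scaling mechanism you describe; the minimization formula over $t\in\C$ is accurate (with minimum $C-\frac{1}{4a}(2|b|+|c|)^2\leq C-\frac{1}{4a}(2|b|-|c|)^2$ from below and above, which is all you need), and your Douglas--type range criterion for $\ran(V_F^{1/2})$ via testing only against $\mathcal{D}(V)$ is valid once you note that $\mathcal{D}(V)$ is a core for $V_F^{1/2}$, so $(V_F^{1/2}\!\upharpoonright_{\mathcal{D}(V)})^*=V_F^{1/2}$ and the Riesz representative $w$ genuinely lies in $\mathcal{D}(V_F^{1/2})$.

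The paper's proof reaches the same conclusions but by a more direct, case-by-case construction rather than through your unified minimization. In part i) it normalizes $\|V_F^{1/2}f_n\|=1$ and inserts phases $e^{i\varphi_n}$ to force the $\langle v,2iVf_n\rangle$ term to $-\infty$, which is your $t$-optimization carried out by hand. In part ii) it does \emph{not} formulate a range criterion; instead it first shows separately that $\mathcal{L}v\perp\ker V_F^{1/2}$ by approximating an element $\lambda k\in\ker V_F$ through $\mathcal{D}(V)$ and driving the form to $\Imag\langle v,(\widetilde{A}^*+\mathcal{L})v\rangle-\Imag\lambda$, and only then argues that $g\mapsto\langle\mathcal{L}v,V_F^{-1/2}g\rangle$ must be unbounded on $\ran(V_F^{1/2}\!\upharpoonright_{\mathcal{D}(V)})$. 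Your $f+sf_0$ trick replaces the first of these two steps and your criterion absorbs the second, so the substance is the same but your packaging is cleaner and makes the symmetry between i) and ii) explicit. The trade-off is that the paper's hands-on sequences make each blow-up visible without the intermediate lemma, whereas your route isolates a reusable characterization of $\ran(V_F^{1/2})$ that exactly mirrors Ando--Nishio's description of $\mathcal{D}(V_K^{1/2})$.
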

\begin{proof}

i) If $\ran(\mathcal{L})\subset\ran(V_F^{1/2})$, this means that for any $v\in\mathcal{V}$ there exists a $\phi_v\in\mathcal{D}(V_F^{1/2})$ such that $\mathcal{L}v=V_F^{1/2}\phi_v$. Thus, for any $f\in\mathcal{D}$ we can write 
\begin{equation} \label{eq:trompete}
\Imag\langle f+v,A_{\mathcal{V,L}}(f+v)\rangle= \|V_F^{1/2}f\|^2+\Imag\langle v,2iV_F^{1/2}V_F^{1/2}f\rangle+\Imag\langle v,(\widetilde{A}^*+\mathcal{L})v\rangle-\Imag\langle \phi_v,V_F^{1/2}f\rangle\:.
\end{equation}
Now, assume that there exists a $v\in\mathcal{V}$ such that $v\notin\mathcal{D}(V_K^{1/2})$. By Corollary \ref{coro:andonishio} with the choice $\widehat{V}=V_F$ and Lemma \ref{lemma:friedrichsdense} i), this means that there exists a sequence $\{f_n\}\subset\mathcal{D}(V)$ with $\|V_F^{1/2}f_n\|=1$ for any $n$ and a sequence of complex phases $\{e^{i\varphi_n}\}$ such that 
\begin{equation*}
\lim_{n\rightarrow\infty}\Imag\langle v,2ie^{i\varphi_n}V_F^{1/2}V_F^{1/2}f_n\rangle=-\lim_{n\rightarrow\infty}\left|\langle v,2V_F^{1/2}V_F^{1/2}f_n\rangle\right|=-\infty\:.
\end{equation*}
Since all other terms in \eqref{eq:trompete} stay bounded, this shows that $A_{\mathcal{V,L}}$ cannot be dissipative in this case.\\
ii) We start by showing that in this case, it is necessary that $\mathcal{L}v\perp\ker V_F^{1/2}$ for all $v\in\mathcal{V}$. Assume this is not the case, i.e. that there exists a $v\in\mathcal{V}$ and a $k\in\ker (V_F^{1/2})=\ker (V_F)$ such that  $\langle \mathcal{L}v,k\rangle\neq 0$. Without loss of generality we may assume that $\Imag\langle \mathcal{L}v,k\rangle=1$. Now, since $\mathcal{D}(V)$ is a core for $V_F^{1/2}$, we can pick a sequence $\{f_n\}\subset\mathcal{D}(V)$ such that $f_n\rightarrow \lambda k$ and $V_F^{1/2}f_n\rightarrow \lambda V_F^{1/2}k=0$, where $\lambda\in\C$ is an arbitrary complex number. We then get
\begin{align*}
&\lim_{n\rightarrow\infty}\Imag\langle (f_n+v,A_{\mathcal{V,L}}(f_n+v)\rangle\\=&\lim_{n\rightarrow\infty}\left(\|V_F^{1/2}f_n\|^2+\Imag\langle v,2iV_K^{1/2}V_K^{1/2}f_n\rangle+\Imag\langle v,(\widetilde{A}^*+\mathcal{L})v\rangle-\Imag\langle \mathcal{L}v,f_n\rangle\right)\\
\overset{\eqref{eq:kuckuck}}{=}&\lim_{n\rightarrow\infty}\left(\|V_F^{1/2}f_n\|^2+\Imag\langle V_K^{1/2}v,2i\mathcal{U}V_F^{1/2}f_n\rangle+\Imag\langle v,(\widetilde{A}^*+\mathcal{L})v\rangle-\Imag\langle \mathcal{L}v,f_n\rangle\right)\\
=&\lim_{n\rightarrow\infty}\left(\|V_F^{1/2}f_n\|^2+\Imag\langle \mathcal{U}^*V_K^{1/2}v,2iV_F^{1/2}f_n\rangle+\Imag\langle v,(\widetilde{A}^*+\mathcal{L})v\rangle-\Imag\langle \mathcal{L}v,f_n\rangle\right)\\
=&\,\Imag\langle v,(\widetilde{A}^*+\mathcal{L})v\rangle-\Imag\lambda\:,
\end{align*}
which is negative if we choose $\Imag\lambda$ large enough. This contradicts the dissipativity of $A_{\mathcal{V,L}}$. Hence $\ran(\mathcal{L})\subset(\ker V_F^{1/2})^\perp=\overline{\ran(V_F^{1/2})}$. Now, since $\ker V_F^{1/2}$ is a reducing subspace for $V_F^{1/2}$, we have that the operator $V_F^{-1/2}$ is a well-defined non-negative selfadjoint operator on the Hilbert space $\overline{\ran(V_F^{1/2})}$. Also, note that $\overline{\ran(V_F^{1/2})}$ reduces $V_F^{1/2}$. Now, assume that there is a $v\in\mathcal{V}$, such that
 $\mathcal{L}v\notin\ran(V_F^{1/2})=\mathcal{D}(V_F^{-1/2})$. This means that we can pick a sequence $\{f_{n}\}\subset{\mathcal{D}(V)}$, where $\|V_F^{1/2}f_{n}\|=1$ for all $n$, such that $$\lim_{n\rightarrow\infty}\Imag\langle \mathcal{L}v,V_F^{-1/2}V_F^{1/2}f_n\rangle=+\infty\:,$$
since otherwise the map $g\mapsto\langle \mathcal{L}v,V_F^{-1/2}g\rangle$ would be a bounded linear functional on ${\ran(V_F^{1/2}\upharpoonright_{\mathcal{D}(V)})}$, which -- by Lemma \ref{lemma:friedrichsdense} (i) -- is dense in $\overline{\ran(V_F^{1/2}})$ --- a contradiction to $\mathcal{L}v\notin\mathcal{D}(V_F^{-1/2})$. 
Thus, we get
\begin{align*}
&\Imag\langle (f_n+v,A_{\mathcal{V,L}}(f_n+v)\rangle\\
=&\|V_F^{1/2}f_n\|^2+\Imag\langle \mathcal{U}^*V_K^{1/2}v,2iV_F^{1/2}f_n\rangle+\Imag\langle v,(\widetilde{A}^*+\mathcal{L})v\rangle-\Imag\langle \mathcal{L}v,f_n\rangle\\
\leq &  1+2\|\mathcal{U}^*V_K^{1/2}v\|+\Imag\langle v,(\widetilde{A}^*+\mathcal{L})v\rangle-\Imag\langle \mathcal{L}v,V_F^{-1/2}V_F^{1/2}f_n\rangle\overset{n\rightarrow\infty}{\longrightarrow}-\infty\:,
\end{align*}
which means that $A_{\mathcal{V,L}}$ cannot be dissipative in this case either. This finishes the proof.
\end{proof}
\begin{remark} For the case of proper extensions, i.e. for $\mathcal{L}=0$, Theorems \ref{thm:alter} and \ref{thm:kuckuckswalzer} readily imply our previous result \cite[Thm. 4.7]{FNW}.
\end{remark}
\begin{remark} \normalfont Since for any $f_n\in\mathcal{D}(A)$ and $v\in\mathcal{V}$ we get
\begin{equation} \label{eq:pachelbel}
\Imag\langle f_n+v,A_{\mathcal{V,L}}(f_n+v)\rangle=\Imag\langle v,(\widetilde{A}^*+\mathcal{L})v\rangle+\|V_K^{1/2}f_n\|^2+\Imag\langle v,2iV_K^{1/2}V_K^{1/2}f_n\rangle+\Imag\langle V_F^{-1/2}V_F^{1/2}f_n,\mathcal{L}v\rangle\:,
\end{equation}
the condition $\mathcal{V}\subset\mathcal{D}(V_K^{1/2})$ controls the term $\Imag\langle v,2iV_K^{1/2}V_K^{1/2}f_n\rangle$ while the condition $\ran(\mathcal{L})\subset\ran(V_F^{1/2})$ ensures that the term $\Imag\langle V_F^{-1/2}V_F^{1/2}f_n,\mathcal{L}v\rangle$ can be controlled when minimizing $\Imag\langle f_n+v,A_{\mathcal{V,L}}(f_n+v)\rangle$. 

But in the case that neither condition is satisfied it could happen that the last term in \eqref{eq:pachelbel} does not stay bounded either and instead ``competes" against the $\Imag\langle v,2iV_K^{1/2}V_K^{1/2}f_n\rangle$ that would go to $-\infty$ for a suitable choice of a sequence $\{f_n\}$. Thus, in the situation $v\notin\mathcal{D}(V_K^{1/2})$ and $\mathcal{L}v\notin\ran (V_F^{1/2})$ it is not clear whether it is in general possible that $A_{\mathcal{V,L}}$ is dissipative. Moreover, since it is difficult to compute $V_F^{1/2}$, $V_F^{-1/2}$ and $V_K^{1/2}$ explicitly, we were not able to construct such an example. (The elementary case of $V$ being a multiplication operator or --- more generally --- an essentially selfadjoint operator will be discussed in Lemma \ref{lemma:singulariter}.)

\end{remark}
\section{Applications of the main theorem}
Despite its theoretical merit, Theorem \ref{thm:alter} does not seem to be very useful for practical applications, since it is in general very difficult to explicitly compute the square-roots $V_K^{1/2}$ and $V_F^{-1/2}$ as they occur in the statement of the theorem. Moreover, we do not have explicit knowledge of the partial isometry $\mathcal{U}$. In this section, we are therefore going to single out three situations in which Condition \eqref{eq:gstinkert} can be simplified and made accessible to direct computations.
\subsection{An additional restriction on $\ran(\mathcal{L})$} \label{subsec:additionalreq} For the statement of Theorem \ref{thm:alter}, we have assumed that $\ran(\mathcal{L})\subset\ran(V_F^{1/2})$. If we make the even stricter assumption that $\ran(\mathcal{L})\subset\ran(V_F)$, we can simplify the result of Theorem \ref{thm:alter}:
\begin{corollary} \label{coro:wolfratshausen} Let $(A,\widetilde{A})$ be dual pair satisfying the common core property, where $A$ is dissipative. Moreover, assume that $\ran(\mathcal{L})\subset\ran(V_F)$. In this case, we write $\mathcal{L}v=V_F\phi_v$, where $\phi_v\in\mathcal{D}(V_F)$, which is determined up to elements in $\ker(V_F)$. Then, $A_\mathcal{V,L}$ is dissipative if and only if $\mathcal{V}\subset\mathcal{D}(V_K^{1/2})$ and for all $v\in\mathcal{V}$, we have that
\begin{equation} \label{eq:scan3}
\Imag\langle v,\widetilde{A}^*v\rangle+\Imag\langle v,V_F\phi_v\rangle\geq\frac{1}{4}\|V_K^{1/2}(\phi_v+2iv)\|^2\:.
\end{equation}
\end{corollary}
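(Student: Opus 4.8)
The plan is to deduce this corollary directly from Theorem \ref{thm:alter}, the point being that the stronger hypothesis $\ran(\mathcal{L})\subset\ran(V_F)$ lets condition \eqref{eq:gstinkert} collapse into the explicit form \eqref{eq:scan3}. First I would record the harmless inclusion $\ran(V_F)\subset\ran(V_F^{1/2})$ (from $V_F=V_F^{1/2}V_F^{1/2}$), so that $\mathcal{L}v\in\ran(V_F^{1/2})=\mathcal{D}(V_F^{-1/2})$, one of the two standing hypotheses of Theorem \ref{thm:alter}. For the other hypothesis, $\mathcal{V}\subset\mathcal{D}(V_K^{1/2})$, I would treat the two directions of the biconditional separately: in the forward direction, dissipativity of $A_{\mathcal{V},\mathcal{L}}$ together with $\ran(\mathcal{L})\subset\ran(V_F^{1/2})$ forces $\mathcal{V}\subset\mathcal{D}(V_K^{1/2})$ by Theorem \ref{thm:kuckuckswalzer} i); in the backward direction this inclusion is assumed outright. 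Thus in either case both hypotheses of Theorem \ref{thm:alter} are in force, and it remains only to identify \eqref{eq:gstinkert} with \eqref{eq:scan3}.

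The heart of the argument is the identity
\begin{equation*}
\mathcal{U}V_F^{-1/2}\mathcal{L}v=V_K^{1/2}\phi_v\qquad\text{for all }v\in\mathcal{V},
\end{equation*}
which I would establish as follows. Writing $\mathcal{L}v=V_F\phi_v=V_F^{1/2}\bigl(V_F^{1/2}\phi_v\bigr)$ with $\phi_v\in\mathcal{D}(V_F)\subset\mathcal{D}(V_F^{1/2})$, the definition of $V_F^{-1/2}$ as the inverse of $V_F^{1/2}$ on the Hilbert space $\overline{\ran(V_F^{1/2})}$ gives $V_F^{-1/2}\mathcal{L}v=P V_F^{1/2}\phi_v$, where $P$ is the orthogonal projection onto $\overline{\ran(V_F^{1/2})}$ (here I would use that $\overline{\ran(V_F^{1/2})}$ reduces $V_F^{1/2}$, as noted in the proof of Theorem \ref{thm:kuckuckswalzer}). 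Since $\mathcal{U}$ vanishes on $\ker(V_F^{1/2})=\overline{\ran(V_F^{1/2})}^\perp$, it annihilates the component $(\idty-P)V_F^{1/2}\phi_v$, so $\mathcal{U}V_F^{-1/2}\mathcal{L}v=\mathcal{U}V_F^{1/2}\phi_v$, and \eqref{eq:kuckuck} of Lemma \ref{lemma:frischerfisch} identifies the latter with $V_K^{1/2}\phi_v$. This is precisely the place where the stronger assumption $\ran(\mathcal{L})\subset\ran(V_F)$, rather than merely $\ran(V_F^{1/2})$, is essential: it is what produces the inner $V_F^{1/2}\phi_v$ that cancels cleanly against $V_F^{-1/2}$ and then feeds into \eqref{eq:kuckuck}.

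With this identity in hand the right-hand side of \eqref{eq:gstinkert} becomes $\tfrac14\|V_K^{1/2}\phi_v+2iV_K^{1/2}v\|^2=\tfrac14\|V_K^{1/2}(\phi_v+2iv)\|^2$, where $\phi_v+2iv\in\mathcal{D}(V_K^{1/2})$ since $\phi_v\in\mathcal{D}(V_F^{1/2})\subset\mathcal{D}(V_K^{1/2})$ and $v\in\mathcal{D}(V_K^{1/2})$; the left-hand side is unchanged after writing $\Imag\langle v,\mathcal{L}v\rangle=\Imag\langle v,V_F\phi_v\rangle$. Each step being an equality, \eqref{eq:gstinkert} and \eqref{eq:scan3} are equivalent, which closes both directions. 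I would finish by checking independence of the choice of $\phi_v$: replacing $\phi_v$ by $\phi_v+k$ with $k\in\ker(V_F)=\ker(V_F^{1/2})$ leaves $V_F\phi_v=\mathcal{L}v$ untouched and, since $V_K^{1/2}k=\mathcal{U}V_F^{1/2}k=0$ by \eqref{eq:kuckuck}, also leaves $V_K^{1/2}\phi_v$ untouched, so \eqref{eq:scan3} is well posed. I do not anticipate a real obstacle; the only delicate point is the bookkeeping with $P$ and $\ker\mathcal{U}$ in the displayed identity, which the argument above handles.
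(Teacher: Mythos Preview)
Your proof is correct and follows essentially the same route as the paper's: invoke Theorem~\ref{thm:kuckuckswalzer}~i) for the necessity of $\mathcal{V}\subset\mathcal{D}(V_K^{1/2})$, then substitute $\mathcal{L}v=V_F\phi_v$ into \eqref{eq:gstinkert} and use $\mathcal{U}V_F^{1/2}\phi_v=V_K^{1/2}\phi_v$ from Lemma~\ref{lemma:frischerfisch}. One small simplification: the projection $P$ in your displayed identity is unnecessary, since $V_F^{1/2}\phi_v\in\ran(V_F^{1/2})\subset\overline{\ran(V_F^{1/2})}$ already, so $V_F^{-1/2}\mathcal{L}v=V_F^{1/2}\phi_v$ directly and the $(\idty-P)$ component you worry about vanishes trivially; your extra bookkeeping is harmless but not needed.
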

\begin{proof} Since $\ran(\mathcal{L})\subset\ran(V_F)\subset\ran(V_F^{1/2})$ by assumption, it follows from Theorem \ref{thm:kuckuckswalzer} i), that it is necessary that $\mathcal{V}\subset\mathcal{D}(V_K^{1/2})$ for $A_\mathcal{V,L}$ to be dissipative. Condition \eqref{eq:scan3} follows from \eqref{eq:gstinkert}, where we substitute $\mathcal{L}v=V_F\phi_v$ to get
\begin{equation*}
\Imag\langle v,\widetilde{A}^*v\rangle+\Imag\langle v,V_F\phi_v\rangle\geq\frac{1}{4}\|\mathcal{U}V^{-1/2}_FV_F\phi_v+2iV_K^{1/2}v\|^2=\frac{1}{4}\|V_K^{1/2}(\phi_v+2iv)\|^2\:,
\end{equation*}
which is the desired result.
\end{proof}
\begin{example} \normalfont \label{ex:potsdam}
Let $\mathcal{H}=L^2(0,\infty)$, assume that the real potential $W\in L^2(0,\infty)$ and consider the dual pair of closed operators $(A,\widetilde{A})$ given by
\begin{align*}
A:\qquad\mathcal{D}(A)=H^2_0(0,\infty),\quad (Af)(x)&= -if''(x)+W(x)f(x)\\
\widetilde{A}:\qquad\mathcal{D}(\widetilde{A})=H^2_0(0,\infty),\quad (\widetilde{A}f)(x)&=if''(x)+W(x)f(x)\:,
\end{align*}
which has the common core property since $\mathcal{D}(A)=\mathcal{D}(\widetilde{A})$ and $(A,\widetilde{A})$ are closed.
Their adjoints are given by
\begin{align*}
\widetilde{A}^*:\qquad\mathcal{D}(\widetilde{A}^*)=H^2(0,\infty),\quad (\widetilde{A}^*f)(x)&= -if''(x)+W(x)f(x)\\
A^*:\qquad\mathcal{D}(A^*)=H^2(0,\infty),\quad (A^*f)(x)&= if''(x)+W(x)f(x)\:.
\end{align*}
Moreover, the ``imaginary part" $V$ and its adjoint $V^*$ are given by 
\begin{align*}
V:\qquad\mathcal{D}(V)&=H^2_0(0,\infty),\quad f\mapsto -f''\\
V^*:\qquad\mathcal{D}(V^*)&=H^2(0,\infty),\quad f\mapsto -f''\:.
\end{align*}
Since $$\ker(V^*\pm i)=\spann\left\{\exp\left(-\frac{1\pm i}{\sqrt{2}}x\right)\right\}\:,$$ and $$\mathcal{D}(\widetilde{A}^*)=\mathcal{D}(V^*)=\mathcal{D}(V)\dot{+}\ker(V^*+i)\dot{+}\ker(V^*-i)=\mathcal{D}(A)\dot{+}\ker(V^*+i)\dot{+}\ker(V^*-i)\:,$$ we may choose
\begin{align*}
\mathcal{D}(V^*)//\mathcal{D}(V)&=\mathcal{D}(\widetilde{A}^*)//\mathcal{D}(A)\\&=\spann\left\{\exp\left(-\frac{1+ i}{\sqrt{2}}x\right),\exp\left(-\frac{1- i}{\sqrt{2}}x\right)\right\}=\spann\{\sigma,\tau\}\:.
\end{align*}
The functions $\sigma$ and $\tau$ are suitable linear combinations of the elements of $\mathcal{D}(\widetilde{A}^*)//\mathcal{D}(A)$ such that
$\sigma(0)=\tau'(0)=1$ and $\sigma'(0)=\tau(0)=0$. For $\rho\in\C$, define the function $\zeta_\rho(x):=\sigma(x)+\rho\tau(x)$ and let $\zeta_\infty(x):=\tau(x)$. In order to be able to use Corollary \ref{coro:wolfratshausen}, we will only consider $\mathcal{L}\zeta_\rho\in\ran(V_F)$, i.e. we can write $\mathcal{L}\zeta_\rho=V_F\phi$ for some $\phi\in\mathcal{D}(V_F)=\{f\in H^2(0,\infty), f(0)=0\}$. Let us therefore use the parameter $\rho\in\C\cup\{\infty\}$ and the function $\phi\in\mathcal{D}(V_F)$ to describe all extensions $A_{\rho,\phi}$ of the form
\begin{align*}
A_{\rho,\phi}:\qquad\mathcal{D}(A_{\rho,\phi})&=\mathcal{D}(A)\dot{+}\spann\{\zeta_\rho\}\\
\left(A_{\rho,\phi}(f+\lambda\zeta_\rho)\right)(x)&= -i(f''(x)+\lambda\zeta_\rho''(x))+W(x)(f(x)+\lambda\zeta_\rho(x))-\lambda\phi''(x)\:,
\end{align*}
where $f\in\mathcal{D}(A)$ and $\lambda\in\C$. Next, let us use Corollary \ref{coro:wolfratshausen} to find the conditions on $\rho$ and $\phi$ for $A_{\rho,\phi}$ to be dissipative. Firstly, observe that $V_K$ is the Neumann-Laplacian on the half-line. This can be seen from 
\begin{equation} \label{eq:king}
\langle f,V^*f\rangle=\overline{f(0)}f'(0)+\int_0^\infty|f'(x)|^2\text{d}x
\end{equation}
for all $f\in\mathcal{D}(V^*)$. In order to find the selfadjoint restrictions of $V^*$, observe that any additional selfadjoint boundary condition has to be of the form $f'(0)=rf(0)$, where $r\in\R$. The additional choice $r=\infty$ corresponds to a Dirichlet condition at $0$, i.e. $f(0)=0$ and describes the Friedrichs extension of $V$. For any $r<0$, we get that $\langle f,V^*f\rangle$ can be made negative, which therefore does not describe a non-negative selfadjoint extension of $V$. For $r\geq 0$, it is obvious that $r=0$ describes the smallest non-negative extension of $V$. Hence, the Kre\u\i n--von Neumann extension is given by the Neumann-Laplacian with domain
$\mathcal{D}(V_K)=\{f\in H^2(0,\infty), f'(0)=0\}$. It is also not hard to see that if we close $\mathcal{D}(V_K)$ with respect to the norm induced by \eqref{eq:king}, we get $\mathcal{D}(V_K^{1/2})=H^1(0,\infty)$. Now, since $\mathcal{D}(\widetilde{A}^*)//\mathcal{D}(A)\subset H^1(0,\infty)=\mathcal{D}(V_K^{1/2})$, we get that the first necessary condition from Corollary \ref{coro:wolfratshausen}, which requires that $\spann\{\zeta_\rho\}\subset\mathcal{D}(V_K^{1/2})$ in order for $A_{\rho,\phi}$ to be dissipative is satisfied for any $\rho\in\C\cup\{\infty\}$. Next, let us determine for which $\rho\in\C\cup\{\infty\}$ and $\phi\in\mathcal{D}(V_F)$ Condition \eqref{eq:scan3} is satisfied. For $\rho\in\C$, it reads as
\begin{align*}
&\Imag\langle \zeta_\rho,\widetilde{A}^*\zeta_\rho\rangle+\Imag\langle \zeta_\rho,V_F\phi\rangle\geq\frac{1}{4}\|V_K^{1/2}(\phi+2i\zeta_\rho)\|^2\\
\Leftrightarrow\:\:&\Imag\langle\zeta_\rho,-i\zeta_\rho''\rangle+\Imag\langle \zeta_\rho,-\phi''\rangle\geq\frac{1}{4}\|\phi'+2i\zeta_\rho'\|^2=\frac{1}{4}\|\phi'\|^2+\|\zeta_\rho'\|^2+\Real\langle \phi',i\zeta_\rho'\rangle\\
\Leftrightarrow\:\: & \Imag(\overline{\zeta_\rho(0)}i\zeta_\rho'(0))+\|\zeta_\rho'\|^2+\Imag\langle\phi'',\zeta_\rho\rangle\geq\frac{1}{4}\|\phi'\|^2+\|\zeta_\rho'\|^2+\Imag\langle\phi'',\zeta_\rho\rangle+\Imag(\overline{\phi'(0)}\zeta_\rho(0))\\
\Leftrightarrow\:\: & \Real \rho\geq \frac{1}{4}\|\phi'\|^2-\Imag(\phi'(0))\:.
\end{align*}
For $\rho=\infty$, we get the condition that
\begin{equation*}
0\geq\frac{1}{4}\|\phi'\|^2\:,
\end{equation*}
which means that the only allowed choice is $\phi(x)\equiv 0$ in this case.
\end{example}
\subsection{The strictly positive case} \label{subsec:strictly} Next, let us consider the case when the imaginary part $V$ is strictly positive, i.e.\ when there exists a positive number $\varepsilon>0$ such that
$\langle f,Vf\rangle\geq \varepsilon\|f\|^2$ for all $f\in\mathcal{D}(V)$. We introduce the notation $V\geq\varepsilon>0$ in this case. 
\begin{corollary} Let $(A,\widetilde{A})$ be a dual pair satisfying the common core property, where $A$ is dissipative. Moreover, let the imaginary part $V$ be strictly positive. Then, $A_{\mathcal{V,L}}$ is dissipative if and only if $\mathcal{V}\subset\mathcal{D}(V_K^{1/2})$ and for all $v\in\mathcal{V}$ we have that
\begin{equation} \label{eq:scan}
\Imag\langle v,\widetilde{A}^*v\rangle+\Imag\langle \mathcal{P}v,\mathcal{L}v\rangle\geq\frac{1}{4}\|V_F^{-1/2}\mathcal{L}v\|^2+\|V_K^{1/2}v\|^2\:.
\end{equation}
Here, $\mathcal{P}$ denotes the unbounded projection onto $\ker V^*$ along $\mathcal{D}(V_F^{1/2})$, according to the decomposition $\mathcal{D}(V_K^{1/2})=\mathcal{D}(V_F^{1/2})\dot{+}\ker V^*$.
\end{corollary}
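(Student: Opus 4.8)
The plan is to derive Condition \eqref{eq:scan} from the general Condition \eqref{eq:gstinkert} of Theorem \ref{thm:alter} by exploiting the simplifications that strict positivity forces on the auxiliary objects $V_F^{1/2}$, $V_F^{-1/2}$ and $\mathcal{U}$. First I would record the consequences of $V\geq\varepsilon>0$: the Friedrichs extension inherits the lower bound, $V_F\geq\varepsilon$, so $V_F^{1/2}$ is a boundedly invertible bijection of $\mathcal{H}$, whence $\ran(V_F^{1/2})=\mathcal{H}$, $\ker(V_F^{1/2})=\{0\}$, the inverse $V_F^{-1/2}$ is a bounded non-negative selfadjoint operator on all of $\mathcal{H}$, and the partial isometry $\mathcal{U}$ of Lemma \ref{lemma:frischerfisch} is a genuine isometry on $\mathcal{H}=\overline{\ran(V_F^{1/2})}$. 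In particular the hypothesis $\ran(\mathcal{L})\subset\ran(V_F^{1/2})$ of Theorem \ref{thm:alter} holds automatically, and Theorem \ref{thm:kuckuckswalzer} i) then shows that $\mathcal{V}\subset\mathcal{D}(V_K^{1/2})$ is necessary for $A_{\mathcal{V,L}}$ to be dissipative. So I may assume this inclusion and apply Theorem \ref{thm:alter} directly.

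Next I would expand the right-hand side of \eqref{eq:gstinkert}. Writing out $\|\mathcal{U}V_F^{-1/2}\mathcal{L}v+2iV_K^{1/2}v\|^2$ and using that $\mathcal{U}$ is isometric, so $\|\mathcal{U}V_F^{-1/2}\mathcal{L}v\|=\|V_F^{-1/2}\mathcal{L}v\|$, together with $\Real\langle x,2iy\rangle=-2\Imag\langle x,y\rangle$, gives
\begin{equation*}
\tfrac14\|\mathcal{U}V_F^{-1/2}\mathcal{L}v+2iV_K^{1/2}v\|^2=\tfrac14\|V_F^{-1/2}\mathcal{L}v\|^2+\|V_K^{1/2}v\|^2-\Imag\langle \mathcal{U}V_F^{-1/2}\mathcal{L}v,V_K^{1/2}v\rangle\:.
\end{equation*}
Substituting this, together with $\Imag\langle v,(\widetilde{A}^*+\mathcal{L})v\rangle=\Imag\langle v,\widetilde{A}^*v\rangle+\Imag\langle v,\mathcal{L}v\rangle$, into \eqref{eq:gstinkert} reduces the whole problem to verifying the single identity
\begin{equation*}
\Imag\langle v,\mathcal{L}v\rangle+\Imag\langle \mathcal{U}V_F^{-1/2}\mathcal{L}v,V_K^{1/2}v\rangle=\Imag\langle \mathcal{P}v,\mathcal{L}v\rangle\:,
\end{equation*}
after which \eqref{eq:scan} is exactly \eqref{eq:gstinkert}.

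The key step is to process this identity using the direct-sum decomposition $\mathcal{D}(V_K^{1/2})=\mathcal{D}(V_F^{1/2})\dot{+}\ker V^*$. I would write $v=v_F+\mathcal{P}v$ with $v_F\in\mathcal{D}(V_F^{1/2})$ and $\mathcal{P}v\in\ker V^*$. Since in the strictly positive case $\ker V_K=\ker V^*$, the summand $\mathcal{P}v$ lies in $\ker V_K^{1/2}$, so $V_K^{1/2}v=V_K^{1/2}v_F=\mathcal{U}V_F^{1/2}v_F$ by \eqref{eq:kuckuck}. Using that $\mathcal{U}$ is an isometry on $\mathcal{H}$ and that $V_F^{-1/2}$ is bounded and selfadjoint, I would compute
\begin{equation*}
\langle \mathcal{U}V_F^{-1/2}\mathcal{L}v,V_K^{1/2}v\rangle=\langle \mathcal{U}V_F^{-1/2}\mathcal{L}v,\mathcal{U}V_F^{1/2}v_F\rangle=\langle V_F^{-1/2}\mathcal{L}v,V_F^{1/2}v_F\rangle=\langle \mathcal{L}v,v_F\rangle\:,
\end{equation*}
so that $\Imag\langle \mathcal{U}V_F^{-1/2}\mathcal{L}v,V_K^{1/2}v\rangle=-\Imag\langle v_F,\mathcal{L}v\rangle$. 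Since $\Imag\langle v,\mathcal{L}v\rangle=\Imag\langle v_F,\mathcal{L}v\rangle+\Imag\langle \mathcal{P}v,\mathcal{L}v\rangle$, the $v_F$-contributions cancel and the right-hand side collapses to $\Imag\langle \mathcal{P}v,\mathcal{L}v\rangle$, which is the desired identity and hence establishes the equivalence of \eqref{eq:gstinkert} and \eqref{eq:scan}.

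The main obstacle I anticipate is justifying the two structural facts about the strictly positive case on which the computation rests, namely the direct-sum decomposition $\mathcal{D}(V_K^{1/2})=\mathcal{D}(V_F^{1/2})\dot{+}\ker V^*$ together with the identification $\ker V_K=\ker V^*$ that makes $V_K^{1/2}\mathcal{P}v=0$. These are standard in the Kre\u\i n--von Neumann theory for operators with a positive lower bound, but they are precisely what guarantees both that $\mathcal{P}$ is a well-defined (if unbounded) projection on $\mathcal{D}(V_K^{1/2})$ and that the $\ker V^*$-component of $v$ is annihilated by $V_K^{1/2}$; without them the cancellation above fails. Everything else is bounded bookkeeping, made routine by the bounded invertibility of $V_F^{1/2}$ and the fact that $\mathcal{U}$ is a genuine isometry rather than merely a partial isometry.
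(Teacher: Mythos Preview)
Your proposal is correct and follows essentially the same route as the paper's proof: both reduce \eqref{eq:gstinkert} to \eqref{eq:scan} by using that $V_F^{1/2}$ is boundedly invertible (so $\mathcal{U}$ is a genuine isometry on $\mathcal{H}$), the decomposition $\mathcal{D}(V_K^{1/2})=\mathcal{D}(V_F^{1/2})\dot{+}\ker V^*$ with $\ker V^*=\ker V_K^{1/2}$, and the intertwining $V_K^{1/2}v_F=\mathcal{U}V_F^{1/2}v_F$. The only cosmetic difference is that the paper pulls $\mathcal{U}$ inside the norm before expanding, whereas you expand first and handle the cross term afterwards; the structural inputs you flag as ``obstacles'' are exactly the facts the paper invokes (citing \cite{Alonso-Simon}).
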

\begin{proof} Since $V\geq\varepsilon>0$, we have that $\ran(V_F)=\ran(V_F^{1/2})=\mathcal{H}$, which means that the condition $\ran(\mathcal{L})\subset\ran(V_F^{1/2})$ is always satisfied. Thus, by Theorem \ref{thm:kuckuckswalzer}, it is necessary that $\mathcal{V}\subset\mathcal{D}(V_K^{1/2})$ for $A_{\mathcal{V,L}}$ to be dissipative.\\
Since $V\geq\varepsilon>0$, we have that $\mathcal{D}(V_K)=\mathcal{D}(\overline{V})\dot{+}\ker V^*$ with $V_K=V^*\upharpoonright_{\mathcal{D}(V_K)}$. This implies that $\ker V^*=\ker V_K$ and since $V_K$ is non-negative, we also get that $\ker V_K^{1/2}=\ker V^*$. It is known that $\mathcal{D}(V_K^{1/2})=\mathcal{D}(V_F^{1/2})\dot{+}\ker V^*$ (cf. \cite{Alonso-Simon}). Thus, we can rewrite 
\begin{align*}
&\frac{1}{4}\|\mathcal{U}V_F^{-1/2}\mathcal{L}v+2iV_K^{1/2}v\|^2=\frac{1}{4}\|\mathcal{U}V_F^{-1/2}\mathcal{L}v+2iV_K^{1/2}(\idty-\mathcal{P})v\|^2\\\overset{\eqref{eq:kuckuck}}{=}&\frac{1}{4}\|\mathcal{U}(V_F^{-1/2}\mathcal{L}v+2iV_F^{1/2}(\idty-\mathcal{P}))v\|^2=\frac{1}{4}\|V_F^{-1/2}\mathcal{L}v+2iV_F^{1/2}(\idty-\mathcal{P})v\|^2\\
=&\frac{1}{4}\|V_F^{-1/2}\mathcal{L}v\|^2+\|V_F^{1/2}(\idty-\mathcal{P})v\|^2+\Imag\langle V_F^{1/2}(\idty-\mathcal{P})v,V_F^{-1/2}\mathcal{L}v\rangle\\
=&\frac{1}{4}\|V_F^{-1/2}\mathcal{L}v\|^2+\|V_K^{1/2}v\|^2+\Imag\langle(\idty-\mathcal{P})v,\mathcal{L}v\rangle\:.
\end{align*}
With this, Condition \eqref{eq:gstinkert} from Theorem \ref{thm:alter} can be rewritten as
\begin{equation*}
\Imag\langle v,\widetilde{A}^*v\rangle+\Imag\langle \mathcal{P}v,\mathcal{L}v\rangle\geq \frac{1}{4}\|V_F^{-1/2}\mathcal{L}v\|^2+\|V_K^{1/2}v\|^2\:,
\end{equation*}
which is the desired result.
\end{proof}
\begin{remark} \normalfont From $V\geq\varepsilon$ it follows that both $V_F$ and $V_F^{1/2}$ are boundedly invertible and thus $\ran(V_F)=\ran(V_F^{1/2})=\mathcal{H}$. Hence, the strictly positive case is a special case of Section \ref{subsec:additionalreq}, since $\ran(\mathcal{L})\subset\ran(V_F)=\mathcal{H}$ is always satisfied. As in Corollary \ref{coro:wolfratshausen}, it is thus helpful to write $\mathcal{L}v=V_F\phi_v$ for any $v\in\mathcal{V}$, where $\phi_v$ is uniquely determined by $\mathcal{L}v$ since $V_F\geq\varepsilon$. Then, we can rewrite \eqref{eq:scan} as follows
\begin{equation} \label{eq:scanbesser}
\Imag\langle v,\widetilde{A}^*v\rangle+\Imag\langle \mathcal{P}v,V_F\phi_v\rangle\geq \frac{1}{4}\|V_F^{1/2}\phi_v\|^2+\|V_K^{1/2}v\|^2\:,
\end{equation}
which is more accessible to explicit computations. 
\end{remark}
\begin{example} \label{ex:1493} \normalfont Let $\mathcal{H}=L^2(0,1)$, assume that $\gamma\geq\sqrt{3}$ and consider the dual pair $(A_0,\widetilde{A}_0)$, given by
\begin{align*}
A_0:\qquad\mathcal{D}(A_0)=\mathcal{C}_c^\infty(0,1),\quad\left(A_0f\right)(x)=-if''(x)-\gamma\frac{f(x)}{x^2},\\
\widetilde{A}_0:\qquad\mathcal{D}(\widetilde{A}_0)=\mathcal{C}_c^\infty(0,1),\quad\left(\widetilde{A}_0f\right)(x)=if''(x)-\gamma\frac{f(x)}{x^2}\:.
\end{align*}
Define the dual pair $(A,\widetilde{A})$, where $A:=\overline{A_0}$ and $\widetilde{A}:=\overline{\widetilde{A}_0}$. By construction, $(A,\widetilde{A})$ has the common core property, where we choose $\mathcal{C}_c^\infty(0,1)=:\mathcal{D}$ to be the common core. The ``imaginary part" $V$ is given by 
\begin{align*}
V:\qquad\mathcal{D}(V)&=\mathcal{C}_c^\infty(0,1)\\
f&\mapsto-f''\:,
\end{align*}
which is a strictly positive operator, since its closure is a restriction of the Dirichlet-Laplacian on the unit interval: $$\langle f,Vf\rangle\geq \pi^2\|f\|^2\:\:\text{for all}\:\: f\in\mathcal{C}_c^\infty(0,1)\:.$$ Moreover, its adjoint $V^*$ is given by
\begin{equation*}
V^*:\qquad\mathcal{D}(V^*)=H^2(0,1)\:,\quad f\mapsto-f''
\end{equation*}
and its kernel is $\ker(V^*)=\spann\{1,x\}$. Thus, observe that for any $f\in H^2(0,1)$, the projection $\mathcal{P}$ onto $\ker(V^*)$ along $\mathcal{D}(V_F^{1/2})$ is given by
\begin{equation} \label{eq:unbddproj}
(\mathcal{P}f)(x)=(1-x)f(0)+xf(1)\:.
\end{equation}

 The choice $\gamma\geq\sqrt{3}$ ensures that $\dim \ker \widetilde{A}^*=\dim \ker A^*=1$, which keeps the extension problem simpler. It can be shown by straightforward calculation that $\mathcal{D}(\widetilde{A}^*)$ can be written as
\begin{equation*}
\mathcal{D}(\widetilde{A}^*)=\mathcal{D}(A)\dot{+}\spann\{x^{\omega},x^{\overline{\omega}+2}\}\:,
\end{equation*}
where we have defined $\omega:=(1+\sqrt{1+4i\gamma})/2$. We therefore choose $\mathcal{D}(\widetilde{A}^*)//\mathcal{D}(A)=\spann\{x^{\omega},x^{\overline{\omega}+2}\}$. Let us now parametrize all proper ``one-dimensional" extensions of $(A,\widetilde{A})$, with the family of operators $\{A_\rho\}_{\rho\in\C\cup\{\infty\}}$ given by
\begin{equation*}
A_\rho:\qquad\mathcal{D}(A_\rho)=\mathcal{D}(A)\dot{+}\spann\{\xi_\rho\},\quad A_\rho=\widetilde{A}^*\upharpoonright_{\mathcal{D}(A_\rho)}\:,
\end{equation*}
where $$\spann\{x^{\omega},x^{\overline{\omega}+2}\}\ni\xi_\rho(x):=\begin{cases}\rho\left(\frac{(2+\overline{\omega})x^{\omega}-\omega x^{\overline{\omega}+2}}{2+\overline{\omega}-\omega}\right)-\frac{x^{\omega}-x^{\overline{\omega}+2}}{2+\overline{\omega}-\omega}\quad&\text{for}\:\:\rho\in\C\\\frac{(2+\overline{\omega})x^{\omega}-\omega x^{\overline{\omega}+2}}{2+\overline{\omega}-\omega}&\text{for}\:\:\rho=\infty\end{cases}$$ satisfies the boundary conditions
\begin{align*}
\xi_\rho(0)=\xi_\rho'(0)=0\:\:\:&\text{for}\:\:\:\rho\in\C\cup\{\infty\}\\
\xi_\rho(1)=\rho,\:\: \xi_\rho'(1)=1\:\:\:&\text{for}\:\:\:\rho\in\C \:\:\:\text{and}\:\:\:\xi_\rho(1)=1,\:\: \xi'_\rho(1)=0\:\:\:\text{for}\:\:\:\rho=\infty\:.
\end{align*}
Next, \eqref{eq:unbddproj} implies that for $\rho\in\C$, we get $\mathcal{P}\xi_\rho (x)=\rho x$, whereas for $\rho=\infty$, we get $\mathcal{P}\xi_\infty(x)=x$. This follows from the fact that $\mathcal{D}(V_F^{1/2})=H^1_0(0,1)$ and for any $\rho\in\C$, we have $\xi_\rho(0)=\xi_\infty(0)=0$ as well as $\xi_\rho(1)=\rho$ and $\xi_\infty(1)=1$. Now, since $V$ is strictly positive, we know that its Friedrichs extension $V_F$ is bijective, which means that any function $\mathcal{L}\xi_\rho\in L^2(0,1)$ can be written as $\mathcal{L}\xi_\rho=V_F\phi=-\phi''$ for some unique $\phi\in\mathcal{D}(V_F)=\{\phi\in H^2(0,1), \phi(0)=\phi(1)=0\}$.
Hence, let us use the parameter $\rho\in\C\cup\{\infty\}$ and the arbitrary function $\phi\in\mathcal{D}(V_F)$ to label all one-dimensional extensions of $\mathcal{D}(A)$ that have domain contained in $\mathcal{D}(\widetilde{A}^*)$. They are given by
\begin{align*}
A_{\rho,\phi}:\qquad\qquad\mathcal{D}(A_{\rho,\phi})&=\mathcal{D}(A)\dot{+}\spann\{\xi_\rho\}\\
[A_{\rho,\phi}(f+\lambda\xi_\rho)](x)&=(-if''(x)-\lambda i\xi_\rho''(x))-\gamma\frac{f(x)+\lambda\xi_\rho(x)}{x^2}-\lambda \phi''(x)\:,
\end{align*}
where $f\in\mathcal{D}(A)$ and $\lambda\in\C$. By \eqref{eq:scanbesser}, we have that $A_{\rho,\phi}$ is dissipative if and only if
\begin{align*}
\Imag\langle \xi_\rho,\widetilde{A}^*\xi_\rho\rangle-\|V_K^{1/2}\xi_\rho\|^2\geq\frac{1}{4}\|V_F^{1/2}\phi\|^2-\Imag\langle \mathcal{P}\xi_\rho,V_F\phi\rangle\:.
\end{align*}
is satisfied. Using that for any $v\in \mathcal{D}(V_K^{1/2})=H^1(0,1)$, we have
\begin{equation*}
\|V_K^{1/2}v\|^2=\|v'\|^2-|v(1)-v(0)|^2\:,
\end{equation*}
it can be shown that for any $v\in\spann\{x^{\omega},x^{\overline{\omega}+2}\}$, we have $$\Imag\langle v,\widetilde{A}^*v\rangle-\|V_K^{1/2}v\|^2=-\Real\left(\overline{v(1)}v'(1)\right)+|v(1)|^2\:,$$ 
which means that
\begin{equation*}
\Imag\langle\xi_\rho,\widetilde{A}^*\xi_\rho\rangle-\|V_K^{1/2}\xi_\rho\|^2=\begin{cases} |\rho|^2-\Real(\rho)\quad&\text{if}\quad \rho\in\C\\ 1\quad&\text{if}\quad \rho=\infty\:.\end{cases}
\end{equation*}
Moreover, since $\|V_F^{1/2}\phi\|=\|\phi'\|$ and 
\begin{equation}
\Imag\left(\int_0^1 x\phi''(x)\text{d}x\right)=\Imag(\phi'(1))
\end{equation}
for any $\phi\in\mathcal{D}(V_F)$, the above yields the conditions on $\rho$ and $\phi$ for $A_{\rho,\phi}$ to be dissipative:
\begin{align*}
&\frac{1}{4}\|\phi'\|^2+\Imag(\overline{\rho}\phi'(1))\leq |\rho|^2-\Real\rho\quad\text{for}\quad\rho\in\C\\
&\frac{1}{4}\|\phi'\|^2+\Imag(\phi'(1))\leq 1\qquad\qquad\quad\:\:\text{for}\quad\rho=\infty\:.
\end{align*}
For the case of proper extensions, where $\phi=0$, i.e.\ for $A_{\rho,0}$ we therefore have the condition that either $\rho=\infty$ or $|\rho|^2-\Real\rho\geq 0$ for $A_{\rho,0}$ to be dissipative. In the non-proper case, for a suitable choice of $\phi$, it is no longer necessary that $\rho$ satisfies this condition. For instance, let $\phi(x):=(x^2-x)\in\mathcal{D}(V_F)$. We then get the condition
\begin{equation*}
\frac{1}{4}\|\phi'\|^2+\Imag(\overline{\rho}\phi'(1))=\frac{1}{12}-\Imag(\rho)\leq |\rho|^2-\Real\rho
\end{equation*}
for $A_{\rho,(x^2-x)}$ to be dissipative. This condition is for example satisfied by $\rho=\frac{1}{2}+\frac{3}{8}i$, i.e. $A_{\left(\frac{1}{2}+\frac{3}{8}i\right),(x^2-x)}$ is dissipative, while $A_{\left(\frac{1}{2}+\frac{3}{8}i\right),0}$ is not. In Corollary \ref{coro:ausmachen}, we will show that the phenomenon that we have a dissipative non-proper extension, defined on a domain on which the corresponding proper extension would not be dissipative, can only occur if the Friedrichs and Kre{\u\i}n-von Neumann extensions of $V$ do not coincide, as it is the case in this example.
\label{ex:shirley}
\end{example}
\begin{remark} \normalfont The choice of the highly singular $x^{-2}$-potential allowed us to compute everything explicitly. It is however not very difficult to add a ``small" extra potential.
\end{remark}
\subsection{The case of coinciding Friedrichs and Kre{\u\i}n-von Neumann extension} \label{subsec:essentials}
Let us now consider the case that the Friedrichs and Kre{\u\i}n-von Neumann extensions of $V$ coincide: $V_F=V_K=:\widehat{V}$. Before we simplify Theorem \ref{thm:alter} with the help of this extra assumption, let us prove that in this case, both conditions, $\mathcal{V}\subset\mathcal{D}(\widehat{V}^{1/2})$ and $\ran(\mathcal{L})\subset\ran( \widehat{V}^{1/2})$ are \emph{independently} necessary for $A_{\mathcal{V,L}}$ to be dissipative.
\begin{lemma} \label{lemma:singulariter} Let $(A,\widetilde{A})$ be a dual pair satisfying the common core condition, where $A$ is dissipative and assume in addition that for the imaginary part $V$ we have $V_F=V_K=:\widehat{V}$. Then, for $A_{\mathcal{V,L}}$ to be dissipative it is necessary that $\mathcal{V}\subset\mathcal{D}(\widehat{V}^{1/2})$ and $\ran(\mathcal{L})\subset\ran(\widehat{V}^{1/2})$.
\end{lemma}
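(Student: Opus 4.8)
The plan is to distill dissipativity of $A_{\mathcal V,\mathcal L}$ into a single relative-boundedness condition and then to decouple it into the two asserted statements by exploiting that $V_F=V_K=:\widehat V$ furnishes \emph{one} genuine non-negative selfadjoint operator whose spectral calculus is available. Write $p(f):=\|\widehat V^{1/2}f\|$ for the form seminorm, fix $v\in\mathcal V$, and let $\mathcal D=\mathcal D(V)$ be a common core. Arguing as in the proof of Theorem \ref{thm:alter}, but keeping the raw terms (i.e.\ not yet assuming $v\in\mathcal D(\widehat V^{1/2})$ or $\mathcal Lv\in\ran(\widehat V^{1/2})$), one has for all $f\in\mathcal D$
\begin{equation*}
\Imag\langle f+v,A_{\mathcal V,\mathcal L}(f+v)\rangle=p(f)^2+2\Real\langle v,Vf\rangle+\Imag\langle f,\mathcal Lv\rangle+c_v,\qquad c_v:=\Imag\langle v,(\widetilde A^*+\mathcal L)v\rangle\geq0 .
\end{equation*}
First I would replace $f$ by $e^{i\theta}tf$ ($t\in\R,\ \theta\in[0,2\pi)$) and minimise the resulting real expression; completing the square (per real line, then optimising over the phase) shows that $A_{\mathcal V,\mathcal L}$ is dissipative if and only if, for every $v\in\mathcal V$,
\begin{equation*}
\bigl|\langle f,\mathcal Lv\rangle+2i\langle Vf,v\rangle\bigr|\le 2\sqrt{c_v}\;p(f)\qquad\text{for all }f\in\mathcal D. \tag{$\clubsuit$}
\end{equation*}

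Next I would translate the two target statements into relative boundedness of the two summands in $(\clubsuit)$. By Corollary \ref{coro:andonishio} applied with $\widehat V$ (so that $V_K^{1/2}=\widehat V^{1/2}$), the condition $v\in\mathcal D(\widehat V^{1/2})$ is equivalent to $p$-boundedness of $f\mapsto\langle v,Vf\rangle$; and since $\mathcal D(V)$ is a form core for $\widehat V^{1/2}$ (Lemma \ref{lemma:friedrichsdense}), the condition $\mathcal Lv\in\ran(\widehat V^{1/2})$ is equivalent to $p$-boundedness of $f\mapsto\langle f,\mathcal Lv\rangle$. Granting $(\clubsuit)$, these two functionals differ by the $p$-bounded functional $f\mapsto\langle f,\mathcal Lv\rangle+2i\langle Vf,v\rangle$, so the two target conditions are \emph{equivalent}; hence it suffices to establish one of them. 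In the sub-cases where only one condition fails while the other already holds, the arguments of Theorem \ref{thm:kuckuckswalzer}(i),(ii) apply \emph{verbatim} (each being a per-$v$ argument); the genuinely new situation, which $V_F=V_K$ must exclude, is that both might fail simultaneously.

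The decoupling is where the hypothesis $V_F=V_K$ enters decisively. The functional $f\mapsto\langle f,\mathcal Lv\rangle$ is bounded in the \emph{ambient} norm, $|\langle f,\mathcal Lv\rangle|\le\|\mathcal Lv\|\,\|f\|$, whereas $(\clubsuit)$ controls the combination only in the \emph{form} seminorm $p$. Letting $E_\lambda$ be the spectral family of $\widehat V$, on vectors whose $\widehat V$-spectral support lies in $[R,\infty)$ one has $\|f\|\le R^{-1/2}p(f)$, so there the $\mathcal Lv$-term is negligible relative to $p(f)$ and $(\clubsuit)$ forces $f\mapsto\langle Vf,v\rangle$ to be $p$-bounded at high frequencies. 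Since $v\notin\mathcal D(\widehat V^{1/2})$ would mean $\int_R^\infty\lambda\,\mathrm d\|E_\lambda v\|^2=\infty$ for every height $R$, this yields a contradiction, giving $v\in\mathcal D(\widehat V^{1/2})$, after which $\mathcal Lv\in\ran(\widehat V^{1/2})$ is immediate from $(\clubsuit)$ via the equivalence above. Concretely, I would pass from $(\clubsuit)$ to the statement that $f\mapsto\langle f,\mathcal Lv\rangle+2i\langle Vf,v\rangle$ extends to a $p$-continuous (hence form-continuous) functional on $\mathcal D(\widehat V^{1/2})$; combined with the ambient continuity of $f\mapsto\langle f,\mathcal Lv\rangle$ this extends $f\mapsto\langle Vf,v\rangle$ form-continuously to $\mathcal D(\widehat V^{1/2})$, and I would then evaluate that extension on the spectral truncations $(\idty-E_R)h$.

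The hard part is precisely this truncation step: the spectral projections $E_\lambda$ of $\widehat V$ do \emph{not} preserve the core $\mathcal D(V)$ — indeed $\mathcal D(V)$ is a form core for $\widehat V^{1/2}$ but a core for $\widehat V$ only when $V$ is essentially selfadjoint — so "restricting $f$ to high frequencies" cannot be carried out naively inside $\mathcal D(V)$, and because $f\mapsto\langle Vf,v\rangle$ fails to be form-continuous, approximating a high-frequency target by core elements in form norm does not preserve it. I expect to circumvent this by working with the form-continuous extension of the combined functional on $\mathcal D(\widehat V^{1/2})$ — a space that the projections $E_\lambda$ do respect — and using the density of $\mathcal D(V)$ as a form core together with the density of $\ran(\widehat V^{1/2}\upharpoonright_{\mathcal D(V)})$ in $\overline{\ran(\widehat V^{1/2})}$ from Lemma \ref{lemma:friedrichsdense} to transfer the high-frequency estimate back to $v$.
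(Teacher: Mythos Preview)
Your overall strategy coincides with the paper's: both hinge on the spectral calculus of the single selfadjoint operator $\widehat V=V_F=V_K$, truncating to high frequencies where $\|f\|\le p(f)$ so that the ambient-bounded term $\langle f,\mathcal Lv\rangle$ becomes $p$-negligible and can be separated from $\langle Vf,v\rangle$. Your discriminant reformulation $(\clubsuit)$ is a clean first step the paper does not make explicit; from there both arguments are the same in spirit.

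The difference is in how the truncation is executed, and this is where your plan remains incomplete. The paper does \emph{not} try to identify an abstract form-continuous extension of the combined functional on $\mathcal D(\widehat V^{1/2})$. Instead it works with the concrete sequence $g_n:=P_2f_n$ (where $P_2:=P([1,\infty))$ and $f_n\in\mathcal D(V)$ comes from Ando--Nishio applied to $P_2v$) and uses the identity
\[
\langle v,\widehat V P_2 f_n\rangle=\langle P_2v,\widehat V f_n\rangle=\langle P_2v,Vf_n\rangle,
\]
which moves the spectral projection \emph{off} the test vector and onto $v$, so that the right-hand side is the quantity Ando--Nishio makes blow up while the left-hand side lives over a vector with $\|g_n\|\le p(g_n)\le 1$. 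This one line is precisely what your abstract route lacks: once you form the graph-norm-continuous extension $H$ of $f\mapsto 2i\langle Vf,v\rangle$ to $\mathcal D(\widehat V^{1/2})$, you have no mechanism to identify $H((\idty-E_R)h)$ with anything involving $v$ (since the individual summand is not $p$-continuous, the limit defining $H$ at a non-core point need not equal $2i\langle\widehat V\cdot,v\rangle$), and without such an identification the high-frequency bound on $H$ says nothing about $\int\lambda\,\mathrm d\|E_\lambda v\|^2$. The paper's trick sidesteps this by never leaving test vectors in $\mathcal D(V)$ on the side that matters: it is $P_2$ acting on $v$, not on $f$, that does the work.
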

\begin{proof} We only need to show that $\mathcal{V}\subset\mathcal{D}(\widehat{V}^{1/2})$ is necessary for $A_{\mathcal{V,L}}$ to be dissipative. The condition $\ran(\mathcal{L})\subset\ran(\widehat{V}^{1/2})$ will then just follow from Theorem \ref{thm:kuckuckswalzer}, ii).
Thus, assume that there exists a $v\in\mathcal{V}$ such that $v\notin\mathcal{D}(\widehat{V}^{1/2})$. 
Since for any $f\in\mathcal{D}(A)$, $v\in\mathcal{V}$ we have
\begin{equation*}
\Imag\langle f+v,A_\mathcal{V,L}(f+v)\rangle=\langle f,Vf\rangle+\Imag\langle v,2iVf\rangle+\Imag\langle v,(\widetilde{A}^*+\mathcal{L})v\rangle+\Imag\langle f,\mathcal{L}v\rangle\:,
\end{equation*}
showing that
\begin{equation} \label{eq:irish} \inf_{f\in\mathcal{D}({V})}(\langle f,{V}f\rangle+\Imag(\langle v,2i{V}f\rangle-\langle \mathcal{L}v,f\rangle))=-\infty\:,
\end{equation}
will imply that $A_{\mathcal{V,L}}$ cannot be dissipative. We will proceed to show that 
\begin{equation} \label{eq:irish2} \inf_{f\in\mathcal{D}(\overline{V})}(\langle f,\overline{V}f\rangle+\Imag(\langle v,2i\overline{V}f\rangle-\langle \mathcal{L}v,f\rangle))=-\infty
\end{equation}
and using that $\mathcal{D}(V)$ is a core for $\overline{V}$, this implies that for each $\widetilde{f}_n\in\mathcal{D}(\overline{V})$ we can choose a sequence $\{\widetilde{f}_{n,m}\}_{m=1}^\infty\subset\mathcal{D}(V)$ such that $\widetilde{f}_{n,m}\overset{m\rightarrow\infty}{\longrightarrow}\widetilde{f}_n$ and $V\widetilde{f}_{n,m}\overset{m\rightarrow\infty}{\longrightarrow}\overline{V}\widetilde{f}_n$. A diagonal sequence argument then shows \eqref{eq:irish} and thus the lemma.

Let us thus now show \ref{eq:irish2}. To this end, let $P$ denote the projection-valued spectral measure corresponding to $\widehat{V}$ and define $P_1:=P([0,1))$ and $P_2:=P([1,\infty))$ as well as $\mathcal{H}_{1,2}:=P_{1,2}\mathcal{H}$. Since $\widehat{V}\geq 0$, we have $P_1+P_2=\idty$, resp. $\mathcal{H}_1\oplus\mathcal{H}_2=\mathcal{H}$.
Now, observe that $v\notin\mathcal{D}(\widehat{V}^{1/2})$ if and only if $P_2v\notin\mathcal{D}(\widehat{V}^{1/2})$.
Since $V_F^{1/2}=V_K^{1/2}=\widehat{V}^{1/2}$, we get that $\ran (V_F^{1/2}\upharpoonright_{\mathcal{D}(V)})=\ran(\widehat{V}^{1/2}\upharpoonright_{\mathcal{D}(V)})$. Hence, if $P_2v\notin\mathcal{D}(\widehat{V}^{1/2})=\mathcal{D}(V_K^{1/2})$, we have by Corollary \ref{coro:andonishio} that there exists a sequence $\{f_n\}\subset\mathcal{D}(\overline{V})$ such that $\|\widehat{V}^{1/2}f_n\|=1$ for all $n\in\N$ and 
\begin{equation} \label{eq:kuzel}
\lim_{n\rightarrow\infty}|\langle P_2v,\widehat{V}^{1/2}\widehat{V}^{1/2}f_n\rangle|=+\infty\:.
\end{equation}
We now claim that the sequence $\{\widehat{V}^{1/2}P_2{f}_n\}$ satisfies
$$ \|\widehat{V}^{1/2}P_2{f}_n\|\leq 1\quad\text{and}\quad\lim_{n\rightarrow\infty}|\langle v,\widehat{V}^{1/2}\widehat{V}^{1/2}P_2{f}_n\rangle|=+\infty\:.$$
The first statement follows immediately from 
\begin{equation} \label{eq:estimate2}
\|\widehat{V}^{1/2}P_2{f}_n\|=\|P_2\widehat{V}^{1/2}{f}_n\|\leq\|\widehat{V}^{1/2}f_n\|=1\:,
\end{equation}
while the second statement follows from \eqref{eq:kuzel}.
Next, observe that
\begin{equation} \label{eq:biscuits}
\|P_2{f}_n\|^2=\int_{[1,\infty)}\text{d}\|P(\lambda){f}_n\|^2\leq\int_{[1,\infty)}\lambda\text{d}\|P(\lambda){f}_n\|^2=\|\widehat{V}^{1/2}P_2{f}_n\|^2\overset{\eqref{eq:estimate2}}{\leq}1\:.
\end{equation}
For any $n\in\N$ choose $\phi_n\in[0,2\pi)$ such that $$\Imag\langle v,\widehat{V}^{1/2}\widehat{V}^{1/2}e^{i\phi_n}P_2{f}_n\rangle=-|\langle v,\widehat{V}^{1/2}\widehat{V}^{1/2}P_2{f}_n\rangle|\:.$$ Choosing $g_n=e^{i\varphi_n}P_2f_n$ for any $n\in\N$ now yields \eqref{eq:irish2} since
\begin{align*}
&\langle g_n,Vg_n\rangle+\Imag(\langle v,2iVg_n\rangle-\langle\mathcal{L}v,g_n\rangle)=\|\widehat{V}^{1/2}P_2f_n\|^2-2|\langle v, \widehat{V}^{1/2}\widehat{V}^{1/2}P_2f_n\rangle|-\Imag\langle \mathcal{L}v,e^{i\varphi_n}P_2f_n\rangle\\
\overset{\eqref{eq:estimate2}}{\leq} & 1-2|\langle v, \widehat{V}^{1/2}\widehat{V}^{1/2}P_2f_n\rangle|+\|\mathcal{L}v\|\|P_2f_n\|\overset{\eqref{eq:biscuits}}{\leq} 1-2|\langle v, \widehat{V}^{1/2}\widehat{V}^{1/2}P_2f_n\rangle|+\|\mathcal{L}v\|\overset{n\rightarrow\infty}{\longrightarrow}-\infty\:.
\end{align*}
This finishes the proof.
\end{proof}
\begin{remark} \normalfont This result applies in particular to the case of $V$ being essentially selfadjoint, where we have $\overline{V}=V_F=V_K$. However, note that the previous lemma and the following corollary cover a wider class of imaginary parts $V$ than just the essentially selfadjoint ones. For example, let $\mathcal{H}=L^2(\R^+)$ and consider the imaginary part $V$ given by
\begin{align*}
V:\mathcal{D}(V)=\mathcal{C}_c^\infty(\R^+),\qquad\left(Vf\right)(x)=-f''(x)-\frac{1/4}{x^2}f(x)\:.
\end{align*}
It is a well-known fact that $V$ is not essentially selfadjoint but that its Friedrichs and Kre{\u\i}n-von Neumann extension coincide (cf.\ e.g.\ \cite[Prop.\ 4.21]{BDG11}). For an abstract criterion as to whether a non-negative symmetric and non-essentially selfadjoint operator has a unique non-negative selfadjoint extension, we refer to Kre{\u\i}n's result in \cite{Krein} and its presentation in \cite[Thm.\ 2.12]{Alonso-Simon}. \label{rem:referee}
\end{remark}
Let us now simplify Theorem \ref{thm:alter} for the case that the Friedrichs and Kre{\u\i}n-von Neumann extension $V_F$ and $V_K$ of $V$ coincide: $V_F=V_K=:\widehat{V}$. Note that the situation of $V$ being essentially selfadjoint is a special case of this. We also want to show that $A_{\mathcal{V,L}}$ can only be dissipative if $A_{\mathcal{V}}$ already is, i.e. there necessarily needs to be a dissipative boundary condition -- described by a suitable choice of $\mathcal{V}$ -- before one can consider deviations from the action of $\widetilde{A}^*$ via non-zero operators $\mathcal{L}$. This is fundamentally different to the case $V_F\neq V_K$, where we have found an example of an extension $A_{\mathcal{V,L}}$, which was dissipative while $A_\mathcal{V}$ was not (Example \ref{ex:1493}).
\begin{corollary} \label{coro:ausmachen} Let $(A,\widetilde{A})$ be dual pair satisfying the common core property, where $A$ is dissipative. Moreover, let the imaginary part $V$ be such that its Friedrichs and Kre{\u\i}n-von Neumann extension coincide, i.e.\ $V_F=V_K=:\widehat{V}$. Then, $A_{\mathcal{V,L}}$ is dissipative if and only if $\mathcal{V}\subset\mathcal{D}(\widehat{V}^{1/2})$, $\ran(\mathcal{L})\subset\ran(\widehat{V}^{1/2})$ and for all $v\in\mathcal{V}$ we have that
\begin{equation} \label{eq:scan2}
\Imag\langle v,\widetilde{A}^*v\rangle\geq\frac{1}{4}\|\widehat{V}^{\,-1/2}\mathcal{L}v\|^2+\|\widehat{V}^{1/2}v\|^2\:.
\end{equation}
In particular, this implies that for $A_{\mathcal{V,L}}$ to be dissipative, it is necessary that $A_\mathcal{V}$ is dissipative. 
\end{corollary}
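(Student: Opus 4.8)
The plan is to reduce the whole statement to the already-proved Theorem \ref{thm:alter} and Lemma \ref{lemma:singulariter}, and then to exploit the fact that the collapse $V_F=V_K=\widehat{V}$ trivializes the partial isometry $\mathcal{U}$ of Lemma \ref{lemma:frischerfisch}. First I would invoke Lemma \ref{lemma:singulariter}, which immediately supplies the necessity of the two conditions $\mathcal{V}\subset\mathcal{D}(\widehat{V}^{1/2})$ and $\ran(\mathcal{L})\subset\ran(\widehat{V}^{1/2})$ for $A_{\mathcal{V,L}}$ to be dissipative. Once these hold, the hypotheses of Theorem \ref{thm:alter} are met --- because $V_K^{1/2}=V_F^{1/2}=\widehat{V}^{1/2}$ forces $\mathcal{D}(V_K^{1/2})=\mathcal{D}(\widehat{V}^{1/2})$ and $\ran(V_F^{1/2})=\ran(\widehat{V}^{1/2})$ --- so dissipativity of $A_{\mathcal{V,L}}$ becomes equivalent to Condition \eqref{eq:gstinkert}. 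This handles both implications of the ``if and only if'' at once: it then suffices to show that, under $V_F=V_K$, Condition \eqref{eq:gstinkert} is equivalent to \eqref{eq:scan2}.

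The crucial observation is that when $V_F^{1/2}=V_K^{1/2}=\widehat{V}^{1/2}$, the defining relation \eqref{eq:kuckuck} reads $\widehat{V}^{1/2}h=\mathcal{U}\widehat{V}^{1/2}h$ for all $h\in\mathcal{D}(\widehat{V}^{1/2})$, so $\mathcal{U}$ acts as the identity on $\ran(\widehat{V}^{1/2})$ and hence, being an isometry on $\overline{\ran(\widehat{V}^{1/2})}$ that agrees with the identity on a dense subset, as the identity on all of $\overline{\ran(\widehat{V}^{1/2})}$; that is, $\mathcal{U}$ is simply the orthogonal projection onto $\overline{\ran(\widehat{V}^{1/2})}$. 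Since $V_F^{-1/2}\mathcal{L}v$ already lies in $\overline{\ran(\widehat{V}^{1/2})}$, this lets me drop $\mathcal{U}$ and replace $V_F^{-1/2}$ by $\widehat{V}^{-1/2}$, so that the right-hand side of \eqref{eq:gstinkert} becomes $\tfrac14\|\widehat{V}^{-1/2}\mathcal{L}v+2i\widehat{V}^{1/2}v\|^2$.

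I would then expand this square. Writing $a:=\widehat{V}^{-1/2}\mathcal{L}v$ and $b:=\widehat{V}^{1/2}v$, one has $\tfrac14\|a+2ib\|^2=\tfrac14\|a\|^2+\|b\|^2-\Imag\langle a,b\rangle$. Self-adjointness of $\widehat{V}^{\pm1/2}$, together with $\mathcal{L}v\in\ran(\widehat{V}^{1/2})$ and $v\in\mathcal{D}(\widehat{V}^{1/2})$, yields $\langle a,b\rangle=\langle\mathcal{L}v,v\rangle$; here one must note that $\widehat{V}^{-1/2}\widehat{V}^{1/2}v$ is the projection of $v$ onto $\overline{\ran(\widehat{V}^{1/2})}$, which is harmless after pairing against $\mathcal{L}v\in\ran(\widehat{V}^{1/2})$. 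Substituting this and expanding the left-hand side of \eqref{eq:gstinkert} as $\Imag\langle v,\widetilde{A}^*v\rangle+\Imag\langle v,\mathcal{L}v\rangle$, the two cross terms $\pm\Imag\langle v,\mathcal{L}v\rangle$ cancel (using $\Imag\langle\mathcal{L}v,v\rangle=-\Imag\langle v,\mathcal{L}v\rangle$), leaving precisely \eqref{eq:scan2}.

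Finally, the ``in particular'' assertion is immediate from the shape of \eqref{eq:scan2}: since $\tfrac14\|\widehat{V}^{-1/2}\mathcal{L}v\|^2\geq0$, validity of \eqref{eq:scan2} forces $\Imag\langle v,\widetilde{A}^*v\rangle\geq\|\widehat{V}^{1/2}v\|^2$ for every $v\in\mathcal{V}$, which is exactly \eqref{eq:scan2} specialized to $\mathcal{L}=0$, i.e.\ the dissipativity criterion for $A_{\mathcal{V}}=A_{\mathcal{V},0}$ (the domain and range conditions being trivially satisfied when $\mathcal{L}=0$). I expect the only genuine obstacle to be the bookkeeping in the crucial step: correctly identifying $\mathcal{U}$ with the projection onto $\overline{\ran(\widehat{V}^{1/2})}$ and verifying that the cross term collapses to $\langle\mathcal{L}v,v\rangle$ in spite of the possible kernel of $\widehat{V}^{1/2}$. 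The remaining expansion of the norm and the cancellation are routine once the inner-product conventions are tracked.
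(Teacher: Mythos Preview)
Your proposal is correct and follows essentially the same route as the paper: invoke Lemma~\ref{lemma:singulariter} for the two necessary conditions, then apply Theorem~\ref{thm:alter} and use that $V_F=V_K$ makes $\mathcal{U}$ the identity on $\overline{\ran(\widehat{V}^{1/2})}$, after which the algebraic reduction of \eqref{eq:gstinkert} to \eqref{eq:scan2} and the ``in particular'' claim are immediate. Your write-up in fact spells out the norm expansion and cross-term cancellation that the paper leaves implicit, and your careful handling of $\widehat{V}^{-1/2}\widehat{V}^{1/2}v$ as the projection of $v$ onto $\overline{\ran(\widehat{V}^{1/2})}$ is exactly the right bookkeeping.
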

\begin{proof} The conditions that $\mathcal{V}\subset\mathcal{D}(\widehat{V}^{1/2})$ and $\ran(\mathcal{L})\subset\ran(\widehat{V}^{1/2})$ for $A_{\mathcal{V,L}}$ to be dissipative follow from Lemma \ref{lemma:singulariter}. Condition \eqref{eq:scan2} follows from \eqref{eq:gstinkert} using that $V_K=V_F=\widehat{V}$, which implies that $\mathcal{U}$ acts like the identity on $\overline{\ran(\widehat{V}^{1/2})}$. Moreover, for $\mathcal{L}=0$, we get that $A_\mathcal{V}$ is dissipative if and only if $\mathcal{V}\subset\mathcal{D}(\widehat{V}^{1/2})$ and $\Imag\langle v,\widetilde{A}^*v\rangle\geq\|\widehat{V}^{1/2}v\|^2$ for all $v\in\mathcal{V}$. Thus, if $A_\mathcal{V}$ is not dissipative then it is either true that $\mathcal{V}\not\subset\mathcal{D}(\widehat{V}^{1/2})$ or we have $\mathcal{V}\subset\mathcal{D}(\widehat{V}^{1/2})$ but there exists a $v\in\mathcal{V}$ such that
$$ \Imag\langle v,\widetilde{A}^*v\rangle-\|\widehat{V}^{1/2}v\|^2<0\:,$$
both implying that $A_{\mathcal{V,L}}$ cannot be dissipative either. 
This shows the corollary.
\end{proof}
\begin{example} \label{ex:konzert} \normalfont Let $0<\gamma<1/2$ and consider the dual pair of operators
\begin{align*}
A_0:\quad\mathcal{D}(A_0)&=\mathcal{C}_c^\infty(0,1),\quad (A_0f)(x):=if'(x)+\frac{i\gamma}{x}f(x)\\
\widetilde{A}_0:\quad\mathcal{D}(\widetilde{A}_0)&=\mathcal{C}_c^\infty(0,1),\quad (\widetilde{A}_0f)(x):=if'(x)-\frac{i\gamma}{x}f(x)\:,
\end{align*}
where $A_0$ is dissipative and $\widetilde{A}_0$ is antidissipative. We denote their closures by $A=\overline{A_0}$ and $\widetilde{A}=\overline{\widetilde{A}_0}$. It can be shown that $$\mathcal{D}(\widetilde{A}^*)=\mathcal{D}(A)\dot{+}\spann\{x^{-\gamma},x^{\gamma+1}\}$$ and we therefore choose $\mathcal{D}(\widetilde{A}^*)//\mathcal{D}(A)=\spann\{x^{-\gamma},x^{\gamma+1}\}$. Moreover, it is easy to see that the imaginary part $V$ is the essentially selfadjoint multiplication operator by the function $\frac{\gamma}{x}$ with domain $\mathcal{C}_c^\infty(0,1)$ which has closure to the selfadjoint maximal multiplication operator by $\frac{\gamma}{x}$, which we denote by $\overline{V}$. Since $x^{-\gamma}\notin\mathcal{D}(\overline{V}^{1/2})$, this means that the only choice for $\mathcal{V}\subset\spann\{x^{-\gamma},x^{\gamma+1}\}$ in order to have a chance for $A_{\mathcal{V,L}}$ to be dissipative is $\mathcal{V}:=\spann\{x^{\gamma+1}\}$. Let us define $v(x)=:x^{\gamma+1}$ and $\mathcal{L}v=:\ell\in\mathcal{H}$ and let us use the functions $v$ and $\ell$ to label $A_{\mathcal{V,L}}=:A_{v,\ell}$. Since $\langle f,\overline{V}f\rangle\geq\gamma\|f\|^2$ for all $f\in\mathcal{D}(\overline{V})$, we get that $\overline{V}$ and $\overline{V}^{1/2}$ are both boundedly invertible, in particular that $\ran(\overline{V}^{1/2})=\mathcal{H}$. Thus, by Corollary \ref{coro:ausmachen}, it only remains to check whether Condition \eqref{eq:scan2} is satisfied, which reads as
\begin{equation*}
\Imag\langle v,\widetilde{A}^*v\rangle-\|\overline{V}^{1/2}v\|^2\geq\frac{1}{4}\|\overline{V}^{-1/2}\ell\|^2\:.
\end{equation*}
It can be easily shown that 
\begin{equation*}
\Imag\langle v,\widetilde{A}^*v\rangle-\|\overline{V}^{1/2}v\|^2{=}\frac{1}{2}\left(|v(1)|^2-|v(0)|^2\right)=\frac{1}{2}\:.
\end{equation*}
Hence, $A_{v,\ell}$ is dissipative if and only if
\begin{equation*}
\|\overline{V}^{\:-1/2}\ell\|^2=\frac{1}{\gamma}\int_0^1 x|\ell(x)|^2\text{d}x\leq 2\:.
\end{equation*}
This means that all dissipative extensions of $A$ that have domain contained in $\mathcal{D}(\widetilde{A}^*)$ are given by
\begin{align} \label{eq:referee}
A_{v,\ell}:\qquad\qquad\qquad\mathcal{D}(A_{v,\ell})&=\mathcal{D}(A)\dot{+}\spann\{v\}\notag\\
\left(A_{v,\ell}(f+\lambda v)\right)(x)&= if'(x)+i\lambda v'(x)+i\gamma\frac{f(x)+\lambda v(x)}{x}+\lambda \ell(x)\:,
\end{align}
where $f\in\mathcal{D}(A)$ and $\lambda\in\C$. The function $\ell\in L^2(0,1)$ has to satisfy
\begin{equation} \label{eq:erdkreis}
\int_0^1x|\ell(x)|^2\text{d}x\leq 2\gamma\:.
\end{equation}
Moreover, by Lemma \ref{prop:dampfnudel}, we have that $A_{v,\ell}$ is maximally dissipative since it is a one-dimensional extension of $A$. 
\begin{remark} As in Example \ref{ex:potsdam}, one could easily add a sufficiently ``small" real potential $W$ to the operators $A_0$ and $\widetilde{A}_0$. 
\end{remark}
\end{example}
\section{Operators with bounded imaginary part}
In this section, we will apply the result of Corollary \ref{coro:ausmachen} in order to construct all dissipative extensions of a dissipative operator with bounded imaginary part, where $\mathcal{D}(A)=\mathcal{D}(S)=\mathcal{D}(V)$. While this is not a new result (it can for example essentially be found in \cite[Theorem 1]{Crandall} with a different way of proof), we want to give more attention to the interplay between boundary conditions and bounded dissipative perturbations. In particular, we will show that if an operator with non-dissipative boundary condition is considered, it is impossible to add a bounded dissipative perturbation such that the result is a dissipative operator (Corollary \ref{coro:jsbach}, ii). On the other hand, we will show that the ``more dissipative" a boundary condition is, the more freedom one has in describing dissipative extensions of a given operator (Corollary \ref{coro:jsbach}, iii).

To start with, let us show that it is sufficient to only consider operators of the form $S+iV$, where $S$ is symmetric and $V\geq 0$ is bounded:
\begin{lemma} \label{lemma:uab} Let $A$ be a dissipative operator and assume that the quadratic form $q$ given by
\begin{equation*}
q:\quad\mathcal{D}(q)=\mathcal{D}(A),\quad f\mapsto\Imag\langle f,Af\rangle
\end{equation*}
is bounded. Then there exists a symmetric operator $S$ with $\mathcal{D}(S)=\mathcal{D}(A)$ and a essentially selfadjoint bounded operator $V\geq 0$ with $\mathcal{D}(V)=\mathcal{D}(A)$ such that $A=S+iV$.
\end{lemma}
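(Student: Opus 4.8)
The plan is to recover the (bounded, nonnegative) imaginary part of $A$ as the bounded selfadjoint operator represented by the Hermitian form whose diagonal is $q$, and then to define $S$ simply as the difference $A-iV$. The whole argument is a Riesz-representation argument applied to the right sesquilinear form.

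First I would introduce the sesquilinear form
\[
b(f,g):=\frac{1}{2i}\left(\langle f,Ag\rangle-\langle Af,g\rangle\right),\qquad f,g\in\mathcal{D}(A),
\]
which is manifestly antilinear in $f$ and linear in $g$. A short check shows that it is Hermitian, i.e.\ $\overline{b(f,g)}=b(g,f)$, that its diagonal reproduces the form in question, $b(f,f)=\Imag\langle f,Af\rangle=q(f)$, and that $b(f,f)\geq 0$ by dissipativity of $A$. Thus $b$ is a nonnegative Hermitian form on $\mathcal{D}(A)$ whose associated quadratic form is exactly $q$.

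Next I would upgrade boundedness of $q$ to boundedness of $b$. Since $b$ is a nonnegative Hermitian form, the Cauchy--Schwarz inequality gives $|b(f,g)|\leq b(f,f)^{1/2}b(g,g)^{1/2}\leq C\|f\|\,\|g\|$, where $C$ is the bound on $q$. As $\mathcal{D}(A)$ is dense (recall that every dissipative operator here is by definition densely defined), $b$ extends uniquely by continuity to a bounded Hermitian nonnegative sesquilinear form on all of $\mathcal{H}$, and the Riesz representation theorem then produces a unique bounded selfadjoint operator $\widehat{V}\geq 0$ with $b(f,g)=\langle f,\widehat{V}g\rangle$ for all $f,g\in\mathcal{H}$. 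I would then set $V:=\widehat{V}\upharpoonright_{\mathcal{D}(A)}$; by construction $\mathcal{D}(V)=\mathcal{D}(A)$, the operator $V$ is bounded and nonnegative, and since its closure is the bounded selfadjoint operator $\widehat{V}$, it is essentially selfadjoint.

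Finally I would define $S:=A-iV$ on $\mathcal{D}(S):=\mathcal{D}(A)$, so that $A=S+iV$ holds by construction, and verify symmetry of $S$. For $f,g\in\mathcal{D}(A)$, using $\langle Vf,g\rangle=\langle f,Vg\rangle=b(f,g)$ together with the antilinearity of $\langle\cdot,\cdot\rangle$ in its first slot, one computes $\langle f,Sg\rangle-\langle Sf,g\rangle=\langle f,Ag\rangle-\langle Af,g\rangle-2i\,b(f,g)$, which vanishes by the very definition of $b$. The only point requiring genuine care is the passage from the bound on the diagonal $q$ to a bound on the off-diagonal values $b(f,g)$, handled by Cauchy--Schwarz for nonnegative forms, together with the density argument needed to invoke Riesz representation on all of $\mathcal{H}$; the symmetry of $S$ and the decomposition $A=S+iV$ are then immediate bookkeeping.
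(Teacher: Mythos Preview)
Your proof is correct and follows essentially the same route as the paper: obtain a bounded nonnegative selfadjoint operator from the bounded quadratic form $q$, restrict it to $\mathcal{D}(A)$ to get $V$, and set $S:=A-iV$. The paper invokes the form representation theorem in one line (``since $q$ is bounded, it is closable; let $V'$ denote the bounded selfadjoint operator associated to it''), whereas you unpack this step explicitly via the polarized sesquilinear form $b$, Cauchy--Schwarz for nonnegative Hermitian forms, density, and Riesz representation---but the underlying argument is the same.
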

\begin{proof}
Since $q$ is bounded, it is closable. Let $V'$ denote the bounded selfadjoint operator associated to it and let $V=V'\upharpoonright_{\mathcal{D}(A)}$. It is not hard to see that $S:=A-iV$ is symmetric and trivially $A=S+iV$.
\end{proof}
Next, let us show that for \emph{any} dissipative extension of $S+iV$, it is necessary that its domain is contained in $\mathcal{D}(S^*)$. Since $V$ is assumed to be bounded, note that $A=S+iV$ is closed if and only if $S$ is closed. Also note that we are describing general extensions of $A$ that need \emph{not} be of the form $\widehat{S}+iV$, where $\widehat{S}$ is a symmetric extension of $S$.
\begin{lemma} \label{lemma:allextensions}
Let $A:=S+iV$, where $S$ is closed and symmetric and $V\geq 0$ is bounded. Then, for an extension $A\subset B$ to be dissipative, it is necessary that $\mathcal{D}(B)\subset\mathcal{D}(S^*)$.
\end{lemma}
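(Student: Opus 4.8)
The plan is to fix an arbitrary $\psi\in\mathcal{D}(B)$ and show that the linear functional $\phi\mapsto\langle\psi,S\phi\rangle$ is bounded on the dense domain $\mathcal{D}(S)=\mathcal{D}(A)$, which is precisely the assertion $\psi\in\mathcal{D}(S^*)$. Since $V$ is bounded, I would first reduce this to controlling the $A$-part: for $\phi\in\mathcal{D}(A)$ we have $\langle\psi,S\phi\rangle=\langle\psi,A\phi\rangle-i\langle\psi,V\phi\rangle$ with $|\langle\psi,V\phi\rangle|\le\|V\|\,\|\psi\|\,\|\phi\|$, so it suffices to establish an estimate of the form $|\langle\psi,A\phi\rangle|\le C\|\phi\|$ (with $C$ depending on $\psi$ but not on $\phi$).

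The only real input is the dissipativity of $B$, which I would exploit on the vectors $\psi+t\phi$; these lie in $\mathcal{D}(B)$ because $\phi\in\mathcal{D}(A)\subset\mathcal{D}(B)$ and $B\phi=A\phi$ there. Taking $t\in\R$ and expanding $\Imag\langle\psi+t\phi,B(\psi+t\phi)\rangle\ge0$, and using $\Imag\langle\phi,A\phi\rangle=\langle\phi,V\phi\rangle\ge0$, one obtains a real quadratic $a+bt+ct^2\ge0$ valid for all $t\in\R$, where $a=\Imag\langle\psi,B\psi\rangle\ge0$, $c=\langle\phi,V\phi\rangle\ge0$, and $b=\Imag\langle\psi,A\phi\rangle+\Imag\langle\phi,B\psi\rangle$. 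Sign-definiteness forces $b^2\le 4ac\le 4a\|V\|\,\|\phi\|^2$ (via the discriminant when $c>0$, and directly when $c=0$, where $a+bt\ge0$ for all $t$ forces $b=0$). Repeating the argument with purely imaginary $t=is$ gives the companion bound $|\Real\langle\psi,A\phi\rangle-\Real\langle\phi,B\psi\rangle|\le 2\sqrt{a\|V\|}\,\|\phi\|$.

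I would then combine the two estimates. Writing $\langle\psi,A\phi\rangle$ through its real and imaginary parts and substituting, the controlled combinations reassemble into $\langle\psi,A\phi\rangle=\overline{\langle\phi,B\psi\rangle}+r(\phi)=\langle B\psi,\phi\rangle+r(\phi)$ with $|r(\phi)|\le 4\sqrt{a\|V\|}\,\|\phi\|$. Since $|\langle B\psi,\phi\rangle|\le\|B\psi\|\,\|\phi\|$, this yields $|\langle\psi,A\phi\rangle|\le(\|B\psi\|+4\sqrt{a\|V\|})\,\|\phi\|$, hence boundedness of the functional and therefore $\psi\in\mathcal{D}(S^*)$ by the reduction of the first paragraph. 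As $\psi\in\mathcal{D}(B)$ was arbitrary, $\mathcal{D}(B)\subset\mathcal{D}(S^*)$.

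The two quadratic expansions and the final recombination are routine; the steps requiring care are the discriminant argument (in particular the separate treatment of $\langle\phi,V\phi\rangle=0$) and the bookkeeping of real and imaginary parts so that the $B\psi$-contributions collapse exactly into $\langle B\psi,\phi\rangle$. The genuine conceptual point—and the place where dissipativity is indispensable—is that without the sign constraints $a\ge0$ and $c\ge0$ the quadratics would fail to be sign-definite and no bound on $b$ could be extracted; this is the main obstacle one must keep in view.
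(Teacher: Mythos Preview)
Your argument is correct, but it proceeds along a different line than the paper's. The paper argues by contradiction: assuming there exists $v\in\mathcal{D}(B)\setminus\mathcal{D}(S^*)$, the functional $f\mapsto\langle v,Sf\rangle$ is unbounded on $\mathcal{D}(S)$, so one can choose a \emph{normalized} sequence $\{f_n\}\subset\mathcal{D}(S)$ with $\Imag\langle v,Sf_n\rangle\to-\infty$; expanding $\Imag\langle f_n+v,B(f_n+v)\rangle$ and bounding every remaining term by a constant depending only on $\|V\|$, $\|v\|$ and $\|Bv\|$ (using $\|f_n\|=1$) immediately forces the whole expression to $-\infty$, contradicting dissipativity. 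By contrast, you run the same expansion as a quadratic in a scalar parameter and extract information via the discriminant, treating real and purely imaginary parameters separately and then recombining to obtain the explicit estimate $|\langle\psi,A\phi\rangle|\le(\|B\psi\|+4\sqrt{\Imag\langle\psi,B\psi\rangle\,\|V\|})\|\phi\|$. Your route is longer and requires the careful real/imaginary bookkeeping you flag, but it yields a quantitative bound on the functional that the paper's contrapositive argument does not; the paper's proof, on the other hand, avoids the discriminant case split and the recombination step entirely by exploiting only the unboundedness of the $\langle v,Sf\rangle$ term against uniformly bounded remainders.
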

\begin{proof} Assume that $\mathcal{D}(B)\not\subset\mathcal{D}(S^*)$, i.e. that there exists a $v\in\mathcal{D}(B)$ such that $v\notin\mathcal{D}(S^*)$. For any $f\in\mathcal{D}(A)=\mathcal{D}(S)$, consider
\begin{align} \label{eq:animal}
\Imag\langle f+v,B(f+v)\rangle&=\Imag\langle f,(S+iV)f\rangle+\Imag\langle v,(S+iV)f\rangle+\Imag\langle f+v,Bv\rangle\notag\\
&=\langle f,Vf\rangle+\Imag\langle v,Sf\rangle+\Imag\langle v,iVf\rangle+\Imag\langle f+v,Bv\rangle\notag\\
&\leq \|V\|\|f\|^2+\Imag\langle v,Sf\rangle+\|V\|\|v\|\|f\|+\|f\|\|Bv\|+\|v\|\|Bv\|\:.
\end{align}
Since $v\notin\mathcal{D}(S^*)$, there exists a normalized sequence $\{f_n\}\subset\mathcal{D}(S)$ such that $$\lim_{n\rightarrow\infty}\Imag\langle v,Sf_n\rangle=-\infty\:.$$ Using \eqref{eq:animal}, we therefore get
\begin{align*}
\Imag\langle f_n+v,B(f_n+v)\rangle\leq \|V\|+\|V\|\|v\|+\|Bv\|+\|v\|\|Bv\|+\Imag\langle v,Sf_n\rangle\overset{n\rightarrow\infty}{\longrightarrow}-\infty\:,
\end{align*}
which shows that $B$ cannot be dissipative in this case. This finishes the proof.
\end{proof}
We are now able to describe all dissipative extensions of $A=S+iV$:
\begin{theorem} \label{thm:bern} Let $A=S+iV$ be a dissipative operator with bounded imaginary part. Then $S_\mathcal{V,L}+iV$, where $S_{\mathcal{V,L}}$ is defined as in Definition \ref{def:quantz}, is a dissipative extension of $S+iV$ if and only if for all $v\in\mathcal{V}\subset\mathcal{D}(S^*)//\mathcal{D}(S)$ we have that $\mathcal{L}v\in\ran(\overline{V}^{1/2})$ and the condition
\begin{equation} \label{eq:ragout}
\Imag\langle v,S^*v\rangle\geq\frac{1}{4}\|\overline{V}^{-1/2}\mathcal{L}v\|^2
\end{equation}
is satisfied.
As before, $\overline{V}^{-1/2}$ denotes the inverse of $\overline{V}^{1/2}$ on the reducing subspace $\overline{\text{\emph{ran}}(\overline{V}^{1/2})}$ as described in \eqref{eq:krakau}. Moreover, \emph{all} dissipative extensions of $S+iV$ are of this form.
\end{theorem}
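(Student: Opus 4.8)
The plan is to deduce this theorem as a direct specialization of Corollary \ref{coro:ausmachen} to the case of a bounded imaginary part. First I would record that, since $V\geq 0$ is bounded, it is essentially selfadjoint with bounded closure $\overline{V}$, so that its Friedrichs and Kre\u\i n--von Neumann extensions coincide: $V_F=V_K=\overline{V}=:\widehat{V}$. In particular $\widehat{V}^{1/2}=\overline{V}^{1/2}$ is a bounded selfadjoint operator with $\mathcal{D}(\widehat{V}^{1/2})=\mathcal{H}$. Recalling that $A=S+iV$ is closed if and only if $S$ is, I work with $A$ closed and form the dual pair $(A,\widetilde{A})=(S+iV,S-iV)$, which by Lemma \ref{thm:ccore} has the common core property with common core $\mathcal{D}=\mathcal{D}(S)$, so that Corollary \ref{coro:ausmachen} becomes applicable.

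The key step is to identify $S_{\mathcal{V},\mathcal{L}}+iV$ with the operator $A_{\mathcal{V},\mathcal{L}}$ of Definition \ref{def:quantz} attached to this dual pair. Since $V$ is bounded, $\widetilde{A}^*=(S-iV)^*=S^*+i\overline{V}$ with $\mathcal{D}(\widetilde{A}^*)=\mathcal{D}(S^*)$, so for $f\in\mathcal{D}(S)$ and $v\in\mathcal{V}\subset\mathcal{D}(S^*)//\mathcal{D}(S)$ one would compute
\begin{equation*}
A_{\mathcal{V},\mathcal{L}}(f+v)=\widetilde{A}^*(f+v)+\mathcal{L}v=S^*(f+v)+i\overline{V}(f+v)+\mathcal{L}v=(S_{\mathcal{V},\mathcal{L}}+iV)(f+v)\:,
\end{equation*}
where $iV$ must be read as its bounded closure $i\overline{V}$ on the enlarged domain. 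Thus the two operators coincide, and Corollary \ref{coro:ausmachen} applies. Its first condition $\mathcal{V}\subset\mathcal{D}(\widehat{V}^{1/2})$ is automatic because $\widehat{V}^{1/2}$ is bounded; the second becomes $\mathcal{L}v\in\ran(\overline{V}^{1/2})$; and since $\overline{V}$ is selfadjoint, $\Imag\langle v,\widetilde{A}^*v\rangle=\Imag\langle v,S^*v\rangle+\langle v,\overline{V}v\rangle=\Imag\langle v,S^*v\rangle+\|\overline{V}^{1/2}v\|^2$, so the inequality \eqref{eq:scan2} simplifies, after cancelling the common term $\|\overline{V}^{1/2}v\|^2$ on both sides, exactly to \eqref{eq:ragout}.

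For the final assertion that \emph{every} dissipative extension $B$ of $A=S+iV$ arises this way, I would invoke Lemma \ref{lemma:allextensions}, which forces $\mathcal{D}(B)\subset\mathcal{D}(S^*)$. Since $\mathcal{D}(S)\subset\mathcal{D}(B)$, I choose a complement $\mathcal{V}$ with $\mathcal{D}(B)=\mathcal{D}(S)\dot{+}\mathcal{V}$ and $\mathcal{V}\subset\mathcal{D}(S^*)//\mathcal{D}(S)$, and define $\mathcal{L}:\mathcal{V}\to\mathcal{H}$ by $\mathcal{L}v:=Bv-S^*v-i\overline{V}v$. Uniqueness of the decomposition $g=f+v$ makes $\mathcal{L}$ a well-defined linear operator, and using $Bf=(S+iV)f=S^*f+i\overline{V}f$ for $f\in\mathcal{D}(S)$ one checks $B=S_{\mathcal{V},\mathcal{L}}+iV$, after which the first part supplies the stated conditions. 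I expect no serious analytic obstacle, as the whole argument is bookkeeping around the reduction to Corollary \ref{coro:ausmachen}; the only points demanding care are the correct reading of $iV$ as $i\overline{V}$ on the enlarged domain, so that the identification with $A_{\mathcal{V},\mathcal{L}}$ is exact, and the use of Lemma \ref{lemma:allextensions} to guarantee that arbitrary dissipative extensions already have their domains contained in $\mathcal{D}(S^*)$.
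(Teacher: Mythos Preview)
Your proposal is correct and follows essentially the same approach as the paper: reduce to Corollary \ref{coro:ausmachen} via the dual pair $(S+iV,S-iV)$, use boundedness of $V$ to get $\widetilde{A}^*=S^*+i\overline{V}$ and $\mathcal{D}(\widehat{V}^{1/2})=\mathcal{H}$, simplify \eqref{eq:scan2} by cancelling $\|\overline{V}^{1/2}v\|^2$, and invoke Lemma \ref{lemma:allextensions} for the final assertion. Your treatment is in fact slightly more explicit than the paper's in spelling out the identification $A_{\mathcal{V},\mathcal{L}}=S_{\mathcal{V},\mathcal{L}}+i\overline{V}$ and in writing down $\mathcal{L}v:=Bv-S^*v-i\overline{V}v$ for the ``all extensions'' part.
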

\begin{remark} For the case that $V\equiv 0$, this means in particular that Condition \eqref{eq:ragout} simplifies to the condition that $\Imag\langle v,S^*v\rangle\geq 0$ for all $v\in\mathcal{V}$.
\end{remark}
\begin{proof} Since $V$ is bounded, $S_{\mathcal{V,L}}$ is an extension of $S$ if and only if $A_{\mathcal{V,L}}=S_{\mathcal{V,L}}+iV$ is an extension of $A:=S+iV$. Clearly, for $A:=S+iV$ and $\widetilde{A}:=S-iV$, we have that $(A,\widetilde{A})$ is a dual pair and we get that $\mathcal{D}(A)=\mathcal{D}(\widetilde{A})=\mathcal{D}(S)$, which means that it has the common core property. Moreover, by boundedness of $V$, we get that $\widetilde{A}^*=S^*+iV$, where $\mathcal{D}(\widetilde{A}^*)=\mathcal{D}(S^*)$. Also, observe that $V\upharpoonright_{\mathcal{D}(S)}$ is essentially selfadjoint, which means that we can apply Corollary \ref{coro:ausmachen}. Since $V$ is bounded, we have that $\mathcal{D}(\overline{V}^{1/2})=\mathcal{D}(\overline{V} )=\mathcal{H}$, which means that the Condition that $\mathcal{V}\subset\mathcal{D}(\overline{V}^{1/2})$ is always satisfied. Thus, by Corollary \ref{coro:ausmachen}, it is necessary that $\ran(\mathcal{L})\subset\ran(\overline{V}^{1/2})$ for $A_{\mathcal{V,L}}$ to be dissipative. Condition \eqref{eq:scan2} reads as
\begin{align*} \Imag\langle v,(S^*+iV)v\rangle\geq\|\overline{V}^{1/2}v\|^2+\frac{1}{4}\|\overline{V}^{-1/2}\mathcal{L}v\|^2
\Leftrightarrow\:\Imag\langle v,S^*v\rangle\geq\frac{1}{4}\|\overline{V}^{-1/2}\mathcal{L}v\|^2\:,
\end{align*} 
which is the desired result. Let us finish by showing that \emph{all} dissipative extensions of $S+iV$ are parametrized by the operators $S_{\mathcal{V,L}}+iV$. By Lemma \ref{lemma:allextensions}, we know that all dissipative extensions have domain contained in $\mathcal{D}(S^*)=\mathcal{D}(\widetilde{A}^*)$. On the other hand, since $\mathcal{V}$ is an arbitrary subspace of $\mathcal{D}(S^*)//\mathcal{D}(S)$, the extensions $S_{\mathcal{V,L}}$ describe all possible extensions of $S$ that have domain contained in $\mathcal{D}(S^*)$. As they are dissipative if and only if $\mathcal{V}$ and $\mathcal{L}$ satisfy the assumptions of this theorem, we have found all dissipative extensions of $(S+iV)$.
\end{proof}
Let us now investigate the relation between the choice of $\mathcal{V}$ and $\mathcal{L}$:
\begin{corollary} \label{coro:jsbach} Let $\mathcal{V}\subset\mathcal{D}(S^*)//\mathcal{D}(S)$.\\
i) If $S_\mathcal{V}$ is symmetric, then $(S_\mathcal{V}+iV)$ is the only dissipative extension of $(S+iV)$ with domain equal to $\mathcal{D}(S_\mathcal{V})$. Moreover, the imaginary part of any other extension of the form $(S_{\mathcal{V,L}}+iV)$ is not bounded from below, i.e. for $\mathcal{L}\neq 0$, there exists no $\gamma\in\R^+$ such that
\begin{equation} \label{eq:semibounded} \inf_{\psi\in\mathcal{D}(S_{\mathcal{V,L}}):\|\psi\|=1}\Imag\langle \psi,(S_{\mathcal{V,L}}+iV)\psi\rangle\geq -\gamma\|\psi\|^2\:.
\end{equation}\\
ii) If $S_\mathcal{V}$ is not dissipative, i.e. if there exists a $v\in\mathcal{V}$ such that
\begin{equation*}
\Imag\langle v,S_\mathcal{V}v\rangle<0\:,
\end{equation*}
then there exists no extension $S_{\mathcal{V,L}}$ and no bounded non-negative operator $V\geq 0$ such that $S_{\mathcal{V,L}}+iV$ is dissipative.\\
iii) If there exists an $\varepsilon>0$ such that
$$\Imag\langle v,S_\mathcal{V}v\rangle \geq \varepsilon \|v\|^2$$
for all $v\in\mathcal{V}$ and if the operator $\mathcal{L}$ is bounded, we get that
\begin{equation} \label{eq:vondenleyen}
\Imag\langle \psi, S_{\mathcal{V,L}}\psi\rangle\geq-\frac{\|\mathcal{L}\|^2}{4\varepsilon}\|\psi\|^2
\end{equation}
for all $\psi\in\mathcal{D}(S_{\mathcal{V,L}})$.
This implies in particular that for any bounded $V\geq \frac{\|\mathcal{L}\|^2}{4\varepsilon}$, we get
\begin{equation*}
\Imag\langle \psi, (S_{\mathcal{V,L}}+iV)\psi\rangle\geq 0
\end{equation*}
for all $\psi\in\mathcal{D}(S_{\mathcal{V,L}})$.
\end{corollary}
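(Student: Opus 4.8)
The plan is to reduce all three parts to a single computation. The master identity I would establish first is that for every $\psi=f+v$ with $f\in\mathcal{D}(S)$ and $v\in\mathcal{V}$,
\begin{equation*}
\Imag\langle\psi,S_{\mathcal{V,L}}\psi\rangle=\Imag\langle v,S_{\mathcal{V}}v\rangle+\Imag\langle\psi,\mathcal{L}v\rangle. \tag{$\ast$}
\end{equation*}
This follows by expanding $S_{\mathcal{V,L}}\psi=S^*\psi+\mathcal{L}v$: since $S\subset S^*$ and $S$ is symmetric, the contributions $\langle f,S^*f\rangle$ and $\langle f,S^*v\rangle+\langle v,S^*f\rangle=2\Real\langle v,Sf\rangle$ are real, so $\Imag\langle\psi,S^*\psi\rangle=\Imag\langle v,S^*v\rangle=\Imag\langle v,S_{\mathcal{V}}v\rangle$, and the remaining term is $\Imag\langle f+v,\mathcal{L}v\rangle=\Imag\langle\psi,\mathcal{L}v\rangle$. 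I would also keep the companion splitting $\Imag\langle\psi,(S_{\mathcal{V,L}}+iV)\psi\rangle=\Imag\langle\psi,S_{\mathcal{V,L}}\psi\rangle+\langle\psi,V\psi\rangle$, valid because $\overline{V}$ is self-adjoint and $\geq 0$.

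For part (ii) I would argue purely through Theorem \ref{thm:bern}. For any bounded $V\geq 0$ the operator $S+iV$ is dissipative, so the theorem applies and shows that dissipativity of $S_{\mathcal{V,L}}+iV$ forces, for every $v\in\mathcal{V}$, both $\mathcal{L}v\in\ran(\overline{V}^{1/2})$ and $\Imag\langle v,S^*v\rangle\geq\frac14\|\overline{V}^{-1/2}\mathcal{L}v\|^2\geq 0$. The quantity $\Imag\langle v,S^*v\rangle=\Imag\langle v,S_{\mathcal{V}}v\rangle$ depends neither on $\mathcal{L}$ nor on $V$, so the existence of $v_0$ with $\Imag\langle v_0,S_{\mathcal{V}}v_0\rangle<0$ violates this necessary condition for every choice of $\mathcal{L}$ and every bounded $V\geq 0$; hence no such dissipative $S_{\mathcal{V,L}}+iV$ exists.

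Part (iii) is a direct estimate from $(\ast)$. Using $\Imag\langle v,S_{\mathcal{V}}v\rangle\geq\varepsilon\|v\|^2$, the Cauchy--Schwarz inequality, and boundedness of $\mathcal{L}$,
\begin{equation*}
\Imag\langle\psi,S_{\mathcal{V,L}}\psi\rangle\geq\varepsilon\|v\|^2-\|\psi\|\,\|\mathcal{L}\|\,\|v\|.
\end{equation*}
Minimizing the right-hand side over $\|v\|\geq 0$ (the minimum of $\varepsilon s^2-\|\mathcal{L}\|\|\psi\|s$ is at $s=\|\mathcal{L}\|\|\psi\|/(2\varepsilon)$) yields the bound $-\frac{\|\mathcal{L}\|^2}{4\varepsilon}\|\psi\|^2$, which is \eqref{eq:vondenleyen}. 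The ``in particular'' statement then follows from the companion splitting, since $V\geq\frac{\|\mathcal{L}\|^2}{4\varepsilon}$ gives $\langle\psi,V\psi\rangle\geq\frac{\|\mathcal{L}\|^2}{4\varepsilon}\|\psi\|^2$, exactly cancelling the negative bound.

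Finally, part (i) has two halves. When $S_{\mathcal{V}}$ is symmetric, $\Imag\langle v,S_{\mathcal{V}}v\rangle=0$, so $\Imag\langle\psi,(S_{\mathcal{V}}+iV)\psi\rangle=\langle\psi,V\psi\rangle\geq 0$ and $S_{\mathcal{V}}+iV$ is dissipative. For uniqueness, by Lemma \ref{lemma:allextensions} any dissipative extension with domain $\mathcal{D}(S_{\mathcal{V}})$ has its domain inside $\mathcal{D}(S^*)$ and therefore is of the form $S_{\mathcal{V,L}}+iV$ (set $\mathcal{L}v:=Bv-(S^*+iV)v$); Theorem \ref{thm:bern} together with $\Imag\langle v,S^*v\rangle=0$ then forces $\|\overline{V}^{-1/2}\mathcal{L}v\|^2\leq 0$, i.e. $\mathcal{L}v=0$, so $\mathcal{L}=0$. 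The \emph{moreover} part is where I expect the actual work. Here I would fix $v_0\in\mathcal{V}$ with $\ell:=\mathcal{L}v_0\neq 0$ and, for a scaling parameter $t>0$, examine $\psi=f+tv_0$, for which $(\ast)$ and symmetry give $\Imag\langle\psi,(S_{\mathcal{V,L}}+iV)\psi\rangle=t\,\Imag\langle\psi,\ell\rangle+\langle\psi,V\psi\rangle$. The key point is that $tv_0+\mathcal{D}(S)$ is dense in $\mathcal{H}$, being a translate of the dense set $\mathcal{D}(S)$, so $f$ may be chosen to make $\psi$ arbitrarily close to the fixed unit vector $\chi:=i\ell/\|\ell\|$, for which $\Imag\langle\chi,\ell\rangle=-\|\ell\|<0$ and $\langle\chi,V\chi\rangle\leq\|V\|$. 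Letting $t\to\infty$ while tightening the approximation (say $\|\psi-\chi\|<1/t$) keeps $\|\psi\|\to 1$ and $\langle\psi,V\psi\rangle$ bounded but drives $t\,\Imag\langle\psi,\ell\rangle\to-\infty$, so the Rayleigh quotient tends to $-\infty$ and \eqref{eq:semibounded} must fail. The main obstacle is to recognize that one should exploit the \emph{non-orthogonality} of the algebraic sum $\mathcal{D}(S)\dot{+}\mathcal{V}$: the destabilizing vector $\ell$ need not lie in $\mathcal{D}(S)$, but density lets $\psi$ imitate $\chi$ while the coefficient $t$ multiplying $\mathcal{L}v=t\ell$ is taken as large as we please, which is what defeats any lower bound.
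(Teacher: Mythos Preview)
Your proof is correct. Parts (ii) and the uniqueness half of (i) match the paper's argument essentially verbatim (both go through Theorem~\ref{thm:bern}). The other two pieces, however, take genuinely different routes.

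For (iii), you prove \eqref{eq:vondenleyen} by a bare-hands quadratic estimate from your identity~$(\ast)$, minimizing $\varepsilon s^2-\|\mathcal{L}\|\|\psi\|s$ over $s\geq 0$. The paper instead chooses the specific operator $V=\frac{\|\mathcal{L}\|^2}{4\varepsilon}\cdot\idty$ and feeds it into Theorem~\ref{thm:bern}: since $\frac14\|(V)^{-1/2}\mathcal{L}v\|^2=\frac{\varepsilon}{\|\mathcal{L}\|^2}\|\mathcal{L}v\|^2\leq\varepsilon\|v\|^2\leq\Imag\langle v,S^*v\rangle$, the operator $S_{\mathcal{V,L}}+i\frac{\|\mathcal{L}\|^2}{4\varepsilon}$ is dissipative, which is exactly \eqref{eq:vondenleyen}. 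Your approach is more self-contained; the paper's is shorter once Theorem~\ref{thm:bern} is in hand.

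For the ``moreover'' in (i), you construct explicit test vectors: with $\ell=\mathcal{L}v_0\neq 0$ you exploit density of $tv_0+\mathcal{D}(S)$ to make $\psi=f+tv_0$ approximate $i\ell/\|\ell\|$ while $t\to\infty$, forcing $t\,\Imag\langle\psi,\ell\rangle\to-\infty$ with $\|\psi\|\to 1$. The paper's argument is slicker: if \eqref{eq:semibounded} held for some $\gamma>0$, then $S_{\mathcal{V,L}}+i(V+\gamma)$ would be dissipative, and Theorem~\ref{thm:bern} applied with $V+\gamma$ in place of $V$ gives $0=\Imag\langle v,S^*v\rangle\geq\frac14\|(V+\gamma)^{-1/2}\mathcal{L}v\|^2$, impossible for $\mathcal{L}\neq 0$. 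This ``shift $V$ by $\gamma$ and reapply the theorem'' trick avoids your test-vector construction entirely and is worth noting as the natural companion to the argument you already used for uniqueness.
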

\begin{proof} i) By Theorem \ref{thm:bern}, Condition \eqref{eq:ragout}, it is necessary that
\begin{equation*}
\Imag\langle v,S^*v\rangle\geq\frac{1}{4}\|V^{-1/2}\mathcal{L}v\|^2
\end{equation*}
for all $v\in\mathcal{V}$. But since $S_\mathcal{V}=S^*\upharpoonright_{\mathcal{D}(S_\mathcal{V})}$ is symmetric, we get $\Imag\langle v,S^*v\rangle= 0$ for all $v\in\mathcal{V}$, which makes it necessary that $\mathcal{L}v=0$ for all $v\in\mathcal{V}$ for $(S_{\mathcal{V,L}}+iV)$ to be dissipative. In other words, only for $\mathcal{L}\equiv 0$ do we have that $A_{\mathcal{V,L}=0}=(S_{\mathcal{V,L}=0}+iV)$ is dissipative. For the second part of i), assume that the imaginary part of $A_{\mathcal{V,L}}$ is semibounded with semibound $-\gamma$ (cf.\ \eqref{eq:semibounded}). This would mean that the operator $S_{\mathcal{V,L}}+i(V+\gamma)$ is dissipative, which by Condition \eqref{eq:ragout} would imply that for all $v\in\mathcal{V}$, the condition
\begin{equation*}
0= \Imag\langle v,S^*v\rangle\geq\frac{1}{4}\|(V+\gamma)^{-1/2}\mathcal{L}v\|^2\:,
\end{equation*}
is satisfied, which is impossible if $\mathcal{L}\neq 0$.

ii) Let $v$ be an element of $\mathcal{V}$ such that $\Imag\langle v,S_\mathcal{V}v\rangle<0$. Thus, by Condition \eqref{eq:ragout} from Theorem \ref{thm:bern}, the operator $(S_{\mathcal{V,L}}+iV)$ cannot be dissipative for any choice of $\mathcal{L}$ or $V$.

iii) Assume now that there exists an $\varepsilon> 0$ such that $\Imag\langle v,S_{\mathcal{V}}v\rangle=\Imag\langle v,S^*v\rangle\geq\varepsilon\|v\|^2$ for all $v\in\mathcal{V}$. If $\mathcal{L}=0$, \eqref{eq:vondenleyen} clearly holds with $\|\mathcal{L}\|=0$. Now, let $\mathcal{L}\neq 0$. Again, by Condition \eqref{eq:ragout} of Theorem \ref{thm:bern}, the operator $S_{\mathcal{V,L}}+i\frac{\|\mathcal{L}\|^2}{4\varepsilon}$ is dissipative if and only if
\begin{equation} \label{eq:farage}
 \Imag\langle v,S^*v\rangle\geq\frac{1}{4}\left\|\left(\frac{\|\mathcal{L}\|^2}{4\varepsilon}\right)^{-1/2}\mathcal{L}v\right\|^2
\end{equation}
for all $v\in\mathcal{V}$. Since for all $v\in\mathcal{V}$ we may estimate
$$ \frac{1}{4}\left\|\left(\frac{\|\mathcal{L}\|^2}{4\varepsilon}\right)^{-1/2}\mathcal{L}v\right\|^2= \frac{4\varepsilon}{4\|\mathcal{L}\|^2}\|\mathcal{L}v\|^2\leq \varepsilon\|v\|^2\leq\Imag\langle v,S^*v\rangle\:,$$
this proves that \eqref{eq:farage} is satisfied. Hence the operator $S_{\mathcal{V,L}}+i\frac{\|\mathcal{L}\|^2}{4\varepsilon}$ is dissipative, which is equivalent to
$$\Imag\langle \psi,S_{\mathcal{V,L}}\psi\rangle\geq -
\frac{\|\mathcal{L}\|^2}{4\varepsilon}\|\psi\|^2 $$
for all $\psi\in\mathcal{D}(S_{\mathcal{V,L}})$. This finishes the proof.
\end{proof}
\begin{example}[Schr\"odinger operator on the half-line] \normalfont Let $\mathcal{H}=L^2(\R^+)$ and consider the closed symmetric operator $S$ given by:
\begin{align*}
S:\qquad\mathcal{D}(S)=\{f\in H^2(\R^+): f(0)=f'(0)=0\},\quad f\mapsto -f''\:.
\end{align*}
Its adjoint is given by
\begin{align*}
S^*:\qquad\mathcal{D}(S^*)= H^2(\R^+),\quad f\mapsto -f''\:,
\end{align*}
where in both cases, $f''$ denotes the second weak derivative of $f$. Since for any $f\in\mathcal{D}(S^*)$ we have
\begin{equation*}
\Imag\langle f,S^*f\rangle=-\Imag\left(\int_0^\infty \overline{f(x)}f''(x)dx\right)=\Imag \left(\overline{f(0)}f'(0)\right)\:,
\end{equation*}
from which it can be easily shown that all maximally dissipative extensions of $S$ are parametrized by the boundary condition
\begin{align*}
S_h:\qquad\mathcal{D}(S_h)&=\{f\in H^2(\R^+): f'(0)=h f(0)\}\\
f&\mapsto -f''\:,
\end{align*}
where $\Imag(h)\geq 0$. Since $S$ is symmetric, we may choose $$\mathcal{D}(S^*)//\mathcal{D}(S)=\ker(S^*+i)\dot{+}\ker(S^*-i)\:.$$ Now pick $\eta_h\in\mathcal{D}(S^*)//\mathcal{D}(S)$ such that $\eta'_h(0)=h$ and $\eta_h(0)=1$, which means that $\mathcal{D}(S_h)=\mathcal{D}(S)\dot{+}\spann\{\eta_h\}$ with the understanding that $h=\infty$ corresponds to Dirichlet boundary conditions at the origin. This implies that
\begin{equation} \label{eq:labyrinth}
\Imag \langle \eta_h,S^*\eta_h\rangle=\Imag h\:,
\end{equation}
where we introduce the convention that $\Imag(\infty)=0$ since $S_\infty$ is selfadjoint. By Theorem \ref{thm:bern}, Condition \eqref{eq:ragout}, we get that for $h=\infty$ the only linear map $\mathcal{L}$ that describes a dissipative extension $S_{\mathcal{V_\infty,L}}$ is given by $\mathcal{L}\equiv 0$, which corresponds to a proper dissipative extension. Here, $\mathcal{V}_\infty:=\spann\{\eta_\infty\}$. Hence, we will not treat this case anymore from now on.
Now, for $h\neq\infty$, the map $\mathcal{L}$ from $\mathcal{V}=\spann\{\eta_h\}$ has to be of the form $\mathcal{L}\eta_h=k$ for some $k\in\mathcal{H}$. Thus, any $f\in\mathcal{D}(S_h)$ can be written as $f=(f-f(0)\eta_h)+f(0)\eta_h$, where $(f-f(0)\eta_h)\in\mathcal{D}(S)$. This means that the operator $S_{\mathcal{V,L}}$ is given by
\begin{align*} S_{\mathcal{V,L}}:\qquad\mathcal{D}(S_{\mathcal{V,L}})&=\mathcal{D}(S_h)\\
S_{\mathcal{V,L}}f&=-f''+f(0)k\:.
\end{align*}
Since $S_{\mathcal{V,L}}$ only depends on our choice of $h\in\C$ and $k\in\mathcal{H}$, let us use these two parameters to label $S_{\mathcal{V,L}}=S_{h,k}$. Let us now consider a two different bounded dissipative perturbations:
\begin{itemize}
\item Let us start with a rank-one perturbation of the form $V=\alpha|\varphi\rangle\langle\varphi|$, where $\alpha>0$ and $\|\varphi\|=1$. Since $\ran\, V=\ran\, V^{1/2}=\spann\{\varphi\}$, the first condition of Theorem \ref{thm:bern} yields that $k\in\spann\{\varphi\}$. Moreover, on $\spann\{\varphi\}$, the operator $V^{-1/2}$ is given by $\varphi\mapsto \alpha^{-1/2}\varphi$. Thus, the second condition of Theorem \ref{thm:bern} reads as
\begin{equation}
\frac{1}{4}\|\alpha^{-1/2}\lambda\varphi\|^2\leq \Imag h\:\:\Leftrightarrow\:\: |\lambda|^2\leq 4\alpha\Imag h\:,
\end{equation}  
where we have parametrized $k=\lambda\varphi$.
Thus, all (maximally) dissipative extensions of the operator
\begin{align*}
A:\quad\mathcal{D}(A)&=\{f\in H^2(\R^+):f(0)=f'(0)=0\}\\
f&\mapsto -f''+i\alpha \langle \varphi,f\rangle\varphi 
\end{align*}
are given by the family of operators $A_{h,\lambda}$, where $|\lambda|^2\leq 4\alpha\Imag h$:
\begin{align*}
A_{h,\lambda}:\qquad\mathcal{D}(A_{h,\lambda})&=\{f\in H^2(\R^+): f'(0)=h f(0)\}\\
f&\mapsto -f''+f(0)\lambda\varphi+i\alpha\varphi\langle\varphi,f\rangle\:.
\end{align*}
\item Now, let $V$ be the multiplication operator by an a.e.\ non-negative function $V(x)\in L^\infty(\R^+)$. Moreover for any function $h\in L^2(\R^+)$, let $\mathcal{E}_h:=\{x:h(x)\neq 0\}$, which is defined up to a set of Lebesgue measure zero. Clearly, $\overline{\ran\,V}=L^2(\mathcal{E}_V)$. Hence, the first condition of Theorem \ref{thm:bern} yields the requirement that $\mathcal{E}_k\subset\mathcal{E}_V$ up to a set of Lebesgue measure zero. Next, $k\in\mathcal{D}(V^{-1/2})$ implies that $k$ has to be such that
\begin{equation*}
\int_{\mathcal{E}_V}\frac{|k(x)|^2}{V(x)}dx<\infty\:.
\end{equation*}
Lastly, the second condition of Theorem \ref{thm:bern} reads as
\begin{equation*}
\int_{\mathcal{E}_V}\frac{|k(x)|^2}{V(x)}dx\leq 4\Imag h\:.
\end{equation*}
Thus, all (maximally) dissipative extensions of the operator
\begin{align*}
A:\quad\mathcal{D}(A)&=\{f\in H^2(\R^+):f(0)=f'(0)=0\}\\
(Af)(x)&= -f''(x)+iV(x)f(x) 
\end{align*}
are given by the family of operators $A_{h,k}$, where $k\in\mathcal{H}$ such that ${\mathcal{E}_k}\subset\mathcal{E}_V$ (up to a set of Lebesgue measure zero) and
\begin{equation*}
\int_{\mathcal{E}_V}\frac{|k(x)|^2}{V(x)}dx\leq 4\Imag h\:.
\end{equation*}
They are given by:
\begin{align*}
A_{h,k}:\qquad\mathcal{D}(A_{h,\lambda})&=\{f\in H^2(\R^+): f'(0)=h f(0)\}\\
(A_{h,k}f)(x)&= -f''(x)+f(0)k(x)+iV(x)f(x)\:.
\end{align*}
\end{itemize}

\end{example}
\subsection*{Acknowledgements} The main part if this research was carried out during the author's PhD studies at the University of Kent in Canterbury, UK. It is thus a pleasure to acknowledge the support and guidance of his supervisors Ian Wood and Sergey Naboko. Also, he would like to thank them for carefully reviewing this manuscript and making useful suggestions. Moreover, he is indebted to the UK Engineering and Physical Sciences Research Council (Doctoral Training Grant Ref.\ EP/K50306X/1) and the School of Mathematics, Statistics and Actuarial Science at the University of Kent for a PhD studentship.
He is also very grateful to the referees for helpful remarks and valuable suggestions, in particular for pointing out that the proofs of Lemma \ref{lemma:singulariter} and Corollary \ref{coro:ausmachen} cover more than just the essentially selfadjoint case and also for suggesting the example given in Remark \ref{rem:referee}. 
\bigskip

\end{document}